\newcommand{\T}{^\top}
\newcommand{\eps}{\varepsilon}
\newcommand{\G}{\mathcal{G}}
\newcommand{\KK}{\mathcal{K}}
\newcommand{\tx}{\textnormal}
\newcommand{\dist}{\tx{\dist}}
\newcommand{\conv}{\tx{conv}}
\newcommand{\supp}{\text{supp}}
\newcommand{\ad}{\Ab_{\mathcal{G}}}
\newcommand{\lk}{\left\{}
\newcommand{\rk}{\right \} }
\newcommand{\CC}{{\mathcal C}}
\newcommand{\Ab}{{\mathsf A}}
\newcommand{\Eb}{{\mathsf E}}
\newcommand{\Hb}{{\mathsf H}}
\newcommand{\Ib}{{\mathsf I}}
\newcommand{\Ub}{{\mathsf U}}
\newcommand{\Xb}{{\mathsf X}}
\newcommand{\Yb}{{\mathsf Y}}
\newcommand{\UU}{{\mathcal U}}
\renewcommand{\d}{\mathbf d}
\newcommand{\e}{\mathbf e}
\newcommand{\h}{{\mathbf h}}
\newcommand{\oo}{\mathbf o}
\renewcommand{\ss}{\mathbf s}
\renewcommand{\t}{\mathbf t}
\newcommand{\w}{\mathbf w}
\newcommand{\x}{\mathbf x}
\newcommand{\y}{\mathbf y}
\newcommand{\z}{\mathbf z}
\newcommand{\R}{\mathbb R}
\newcommand{\norm}[1]{\|#1\|}
\newcommand{\irg}[2]{[{#1}\!:\!{#2}]}
\newcommand{\che}{{\textstyle{\frac 12}}}
\begin{document}

\title{The Maximum Clique Problem under Adversarial Uncertainty: a  min-max approach
}


\author{
Immanuel Bomze
\and
Chiara Faccio
\and
Francesco Rinaldi
\and
Giovanni Spisso
}

\institute{
Immanuel Bomze \at
Faculty of Mathematics, University of Vienna,\\
Oskar-Morgenstern-Platz~1, 1090 Wien, Austria
\at 
CCOR-CIAS, Corvinus University of Budapest,\\
F\H{o}vám tér~8, 1093 Budapest, Hungary
\at
ILOCOS-L2S, Université Paris-Saclay, CNRS, CentraleSupélec,\\
3, Rue Joliot Curie, 91192 Gif sur Yvette cedex, France\\
\email{immanuel.bomze@univie.ac.at}
\and
Chiara Faccio \and Francesco Rinaldi \and Giovanni Spisso \at
Dipartimento di Matematica ``Tullio Levi-Civita'', Università di Padova,\\
Via Trieste~63, 35131 Padova, Italy\\
\email{\{chiara.faccio,rinaldi\}@unipd.it, giovanni.spisso@phd.unipd.it}
}

\date{Received: date / Accepted: date}

\maketitle

\begin{abstract}
We analyze the problem of identifying large cliques in graphs that are affected by adversarial uncertainty. More specifically, we consider a new formulation, namely the adversarial maximum clique problem, which extends the classical maximum-clique problem to graphs with edges strategically perturbed by an adversary. The proposed mathematical model is thus formulated as a two-player zero-sum game between a clique seeker and an opposing agent. Inspired by regularized continuous reformulations of the maximum-clique problem, we derive a penalized continuous formulation leading to  a nonconvex and nonsmooth optimization problem. We further introduce the notion of stable global solutions, namely points remaining optimal under small perturbations of the penalty parameters, and prove an equivalence between stable global solutions of the continuous reformulation and largest cliques that are common to all the adversarially perturbed graphs. In order to solve the given nonsmooth problem, we develop a first-order and projection-free algorithm based on generalized subdifferential calculus in the sense of Clarke and Goldstein, and establish global sublinear convergence rates   for it. Finally, we report numerical experiments on benchmark instances showing that the proposed method efficiently detects large common cliques.

\keywords{maximum clique \and adversarial uncertainty \and nonsmooth optimization \and projection-free methods}
\subclass{MSC 90C06 \and 90C26 \and 90C30 \and 91A10\ and 05C85}
\end{abstract}

\section{Introduction}
\label{sec:intro}
Denote by $[a\!:\!b]$ the set of all integers in the interval $[a,b]$. Let $\G = (V, E)$  be an undirected, loopless graph on  vertices $V= [1\!:\! n]$ with edge set $E\subset \binom{V}{2}$, and $\ad$ be the adjacency matrix of $\G$, defined as
\begin{equation}\label{adj}
[\ad]_{i,j} = \begin{cases}
1 & \text{ if } \{i,j\} \in E_\G\, ,  \\
0 & \text{ else. }
\end{cases}
\end{equation}
Given any nonempty subset $S$ of $V$, we denote with $\x(S)$ its associated characteristic vector, defined as $\x(S)=  \frac{1}{|S|}\sum\limits_{i\in S} \e_i$, where $\e_i\in \R^n$ is  the $i$-th column of the $n \times n$ identity matrix $\Ib_n$.
A clique in $\G$ is a subset of nodes $C \subseteq V$ that are all mutually connected. It is maximal if there does not exist any larger clique $D
$ such that $C$ is strictly contained in $D$. The classical maximum clique problem consists in finding a maximal clique in $\G$ such that its cardinality (number of vertices) is maximum. Given the graph $\G$, the maximum clique  cardinality is indicated  by $\omega(\G)$. 

The maximum clique problem has practical applications in a wide range of fields including social network analysis, scheduling, financial networks and telecommunications. Therefore, despite being an NP-hard problem \cite{Karp1972}, several solution methods have been explored (see, e.g., \cite{bomze1999maximum,wu2015review}) due to its practical significance.

In \cite{motzkin-straus-1965}, Motzkin and Straus provided a connection between the maximum clique problem and the solutions of the nonconvex quadratic  (so,  continuous) optimization problem 
\begin{equation}\label{MotzkinStraus_formulation}
z^* := \max_{\x \in \Delta^V} \x\T \ad  \x
\end{equation}
where $\Delta^V=\{\x\in \R^n_+: \e\T \x=1  \}$ and $\e$ is the all-ones vector. In particular, they proved that given $C \subseteq V$ a maximum  clique in $\G$, the characteristic vector $\x(C)$ is a global  maximizer of (\ref{MotzkinStraus_formulation}), with the clique number $\omega(\G)$ encoded in its optimal value: $z^* = 1- \frac 1{\omega(\G)}$. A drawback linked to the Motzkin-Straus formulation is the existence of spurious solutions \cite{pelillo1995relaxation,pelillo1995feasible}, i.e., local (global) solutions of (\ref{MotzkinStraus_formulation}) that are not characteristic vectors and have no connection to a  clique in the graph.

In order to deal with this issue, a regularized approach was hence proposed by Bomze in \cite{Bomz97b}:
\begin{equation}
\label{msreg}
\max_{\x \in \Delta^V} \x\T [\che\Ib_n +\ad ] \x
\end{equation}
where $\Ib_n$ denotes the $n\times n$ identity matrix. This encoding guarantees that any  local (global) solution $\x^*$ to problem (\ref{msreg}) corresponds to a maximal (maximum) clique $C = \supp(\x^*) = \{i \in V: x_i >0 \}$ and vice versa. This one-to-one correspondence is ensured by the regularization term ${\norm\x}_2^2= \x\T \Ib_n\x$.

The role of the regularization term was further generalized in \cite{hungerford2019general}, where the authors analyzed different variants of the regularized continuous formulation (\ref{msreg}), and established conditions that guarantee the equivalence between the regularized formulation and the maximum clique problem, both in terms of local and global solutions.
In \cite{Stozhkov2022Continuous}, the Motzkin-Straus formulation was extended to two clique relaxation models, $s$-defective clique and $s$-plex, in terms of maximizing a continuous cubic function over a polyhedral set described by two variables $\x$ and $\y$. In \cite{Bomz21}, the authors provided a regularized version of the cubic continuous formulation for the maximum $s$-defective clique problem and applied some tailored variants of the Frank-Wolfe algorithm to their formulation.

In scenarios where the exact structure of $\G$ cannot be precisely determined, the problem data, that is $\ad$,  can be considered  given subject to some uncertainty. To model this, one viable approach is to account for continuous perturbations in the entries of $\ad$ within a predefined uncertainty set (e.g., a ball, an ellipsoid, a spectrahedron, or a box).
This model makes sense if the link strengths on edges vary, as explored in the literature, e.g., 
by~\cite{Bomz20a},  where the authors focus on relevant applications in social network analysis.

In this paper, we consider an unweighted graph $\G$ and analyze the scenario where the perturbation is discrete, meaning that some edges may be added to or removed from $\G$, i.e., some entries of $\ad$ may switch from zero to one. In many modern applications, such uncertainty is adversarial in nature, as different agents have conflicting goals regarding the detection or obscuration of the underlying graph structure.  This dynamics can be hence naturally modeled as a two-player game between a clique seeker and a network adversary. Formally, we have a family of possible graph structures\ $\G_1,\dots, \G_m $, each representing a possible configuration of edges that may occur due to  adversarial intervention.
 The seeker’s goal is thus to maximize the connectivity of the selected nodes, while the adversary chooses a graph configuration $\G_i$ that minimizes this objective by removing/masking critical edges or by including fake ones. We thus define the \emph{backbone network}, as the set of edges that are stable, i.e., included in all the graph configurations and the \emph{adversarial maximum common clique problem} as the seeker's task of finding the largest subset of vertices forming a clique in the backbone network (i.e., a clique with edges in every configuration), without explicitly knowing all the graph structures. We hence have an adversarial interaction between the two players, and, whenever the seeker selects a subset of vertices, the adversary responds by revealing the worst possible graph configuration~$\G_i$ among the available ones, that is, the one in which the induced subgraph on the selected subset of vertices is least connected. This problem formulation may appear in several real-world settings:
\begin{itemize}
\item \textbf{Communication Networks~\cite{boutremans2002impact,Kleinberg2008Network}:} The backbone network consists of key routers that have stable connections, while additional links are available depending on congestion or failures. Users hence attempt to identify the most connected subset of nodes, while the network owner dynamically modifies link availability to obscure this information.
\item \textbf{Cybersecurity and Privacy \cite{srivastava2018graph}:} A hidden clique represents a group of critical servers or key actors in a communication network.  Attackers attempt to reveal the structure by probing connections, while defenders limit information leakage through controlled link availability.
\item \textbf{Social Network Analysis \cite{waniek2018hiding,zhou2008brief,zhu2024defending}:}
 Influential subgroups in a social network form hidden cliques, and malicious users try to identify these communities to feed them with fake news and rumors to manipulate public opinion or generate revenue on their sites (e.g., clickbait). The platform provider may hence intentionally obfuscate certain relationships to prevent manipulation or unwanted discoveries.
\end{itemize}

As we will see in the next subsection, a natural way to extend the Motzkin – Straus formulation to this case would lead to an adversarial max-min optimization problem. However, unlike the classical maximum-clique case, the solutions of this max-min problem do not, in general, correspond to characteristic vectors of common cliques and even non-feasible patterns may appear. Thus, in order to guarantee the discrete-continuous equivalence, we need to introduce a suitable continuous reformulation that generalizes the above described  regularization to the adversarial case.

\subsection{Preliminary formulation }
\label{sec:subsec:preliminary}
We denote the finite universe of all regularized adjacency matrices of graphs of order $n$ by
$$\overline \UU :=\{ \che \Ib_n + \ad : \G \mbox{ is a graph of order }n\}\, ;$$
In other words, $\overline \UU$ is composed of all $\Ub=\Ub\T$ such that $\Ub - \frac 12 \Ib_n\in   \{0,1\}^{n\times n}$ is a symmetric binary matrix with zero diagonal. Reverting the dependence on $\G$, we may write $\G 
= \G_{\Ub} =(V,E_\Ub)$ if and only if $\Ub = \che\Ib_n + \ad$. 

For the theoretical analysis to follow, we may hence consider a general (finite) uncertainty set $\UU \subseteq \overline\UU $. We consider the following robust optimization problem:

\begin{equation}\label{grob}
\max_{\x \in \Delta^V} \min_{\Ub \in \mathcal{U} }\x\T  \Ub  \x\, ,
\end{equation}
a continuous, generally nonsmooth, nonconvex, piecewise quadratic optimization problem over the simplex polytope. Finiteness of $\UU$ and compactness of $\Delta^V$ ensure the existence of all extrema considered. It is important to highlight that the seeker does not have direct access to the set of all configurations $\mathcal{U}=\{\Ub_1,\ \dots,\ \Ub_m\}$. If this were the case, the seeker might easily calculate the backbone network adjacency matrix  $\Ab_{\G_{bb}}$ (considering the Hadamard product of all the available adjacency matrices) and then solve the classic max-clique problem given in~\eqref{msreg}. 

Since $\Ub$ will be indefinite for all $\Ub\in \UU$, Sion's theorem cannot be applied, and there is only a general min-max inequality. Indeed, the min-max problem
$$\min _{\Ub\in \UU} \max_{\x\in \Delta^V} \x\T\Ub \x$$
corresponds to selecting the lowest $\omega(\G_\Ub)$ across all graphs $\G_\Ub$ generated by $\Ub \in \UU$, which is obviously a different problem. 
The solution to this problem may be complicated by the fact that there is more than one minimal $\Ub\in \UU$, but if solved, it amounts to selecting those edges for removal such that the resulting graph has the smallest possible clique number.

As an illustrative case, let us consider the following example. Let $\G = \KK_4$ be a complete graph with four nodes and choose $\UU = \{\Ub_1, \Ub_2, \Ub_3, \Ub_4\}$ as shown in Figure \ref{example_k4}. It is easy to prove that  $\min_{\Ub \in \UU } \max_{\x \in \Delta^V} \x\T  \Ub  \x = \frac 34 \, .$
    \begin{figure}[ht!]
    \centering    \includegraphics[width=0.3\textwidth]{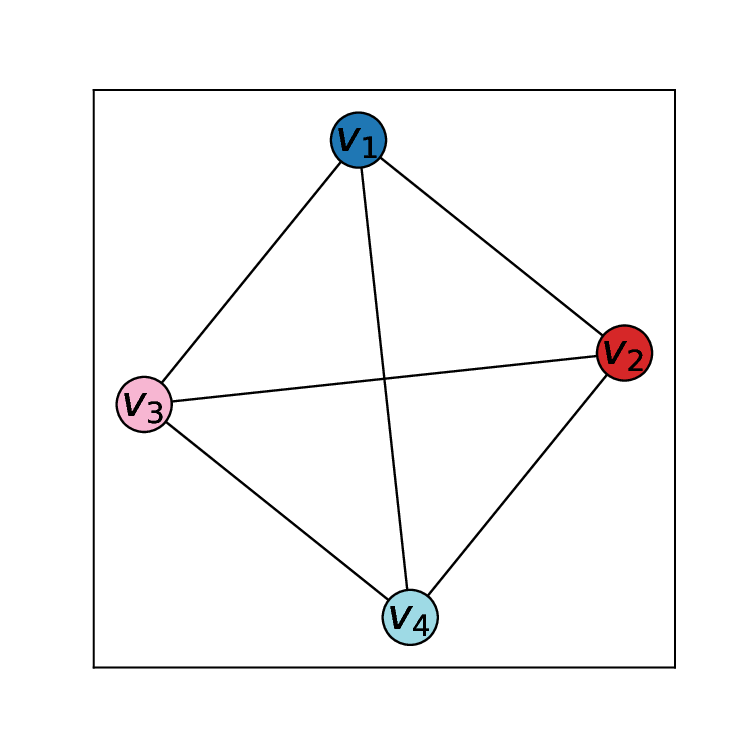}
\includegraphics[width=0.3\textwidth]{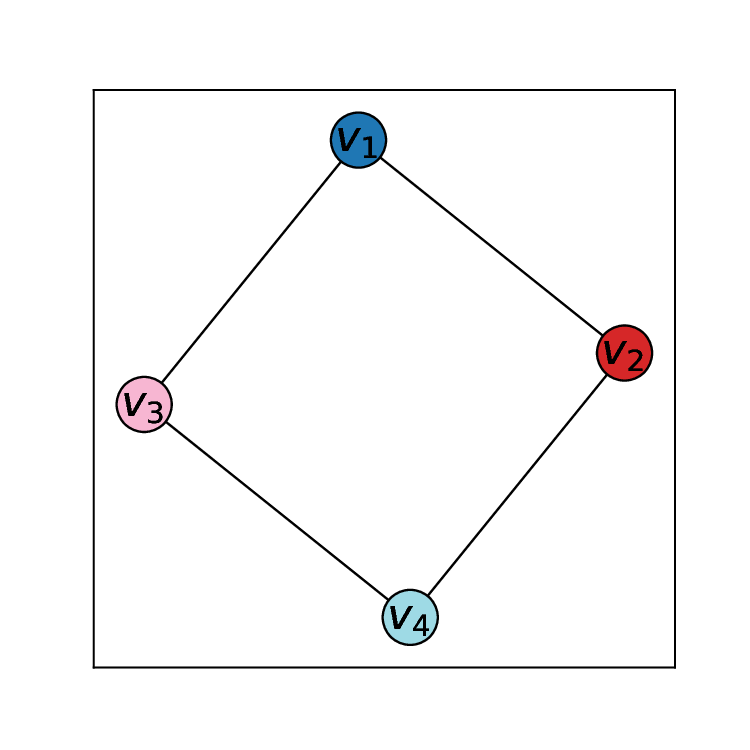}    \includegraphics[width=0.3\textwidth]{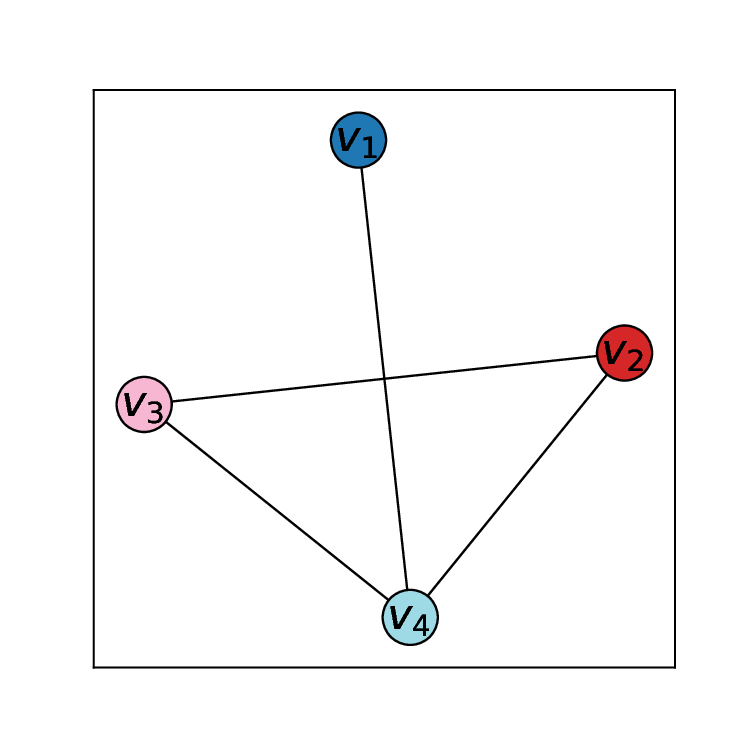}
\includegraphics[width=0.3\textwidth]{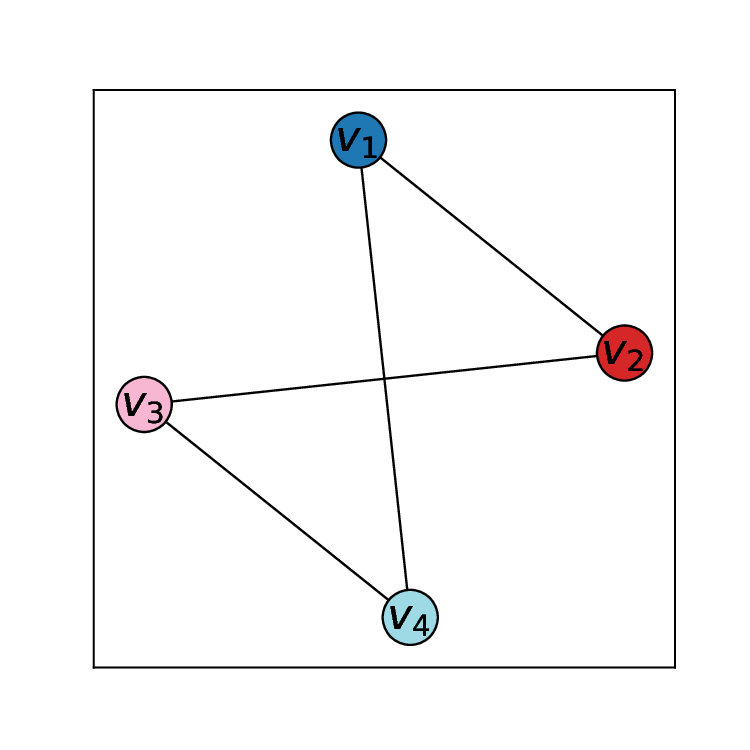}    \includegraphics[width=0.3\textwidth]{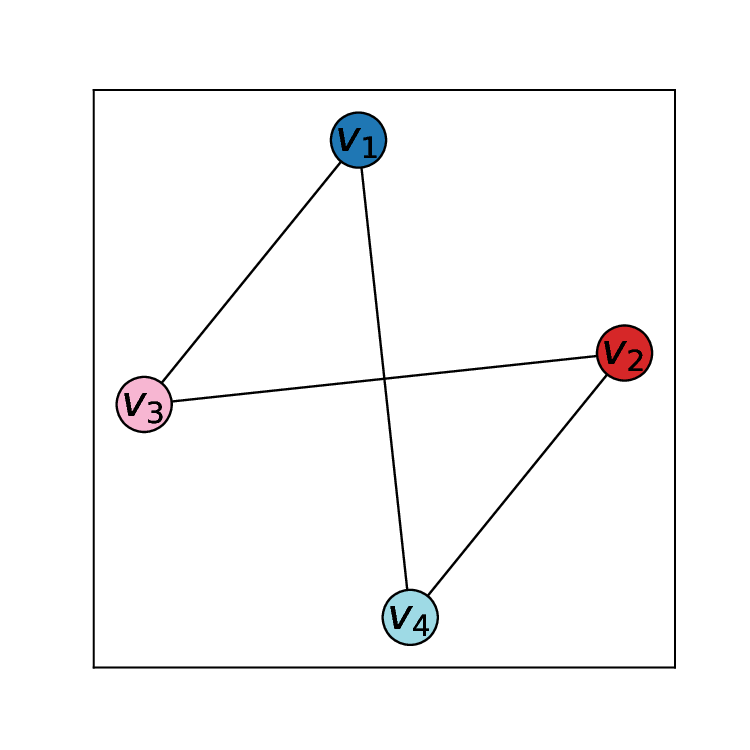}
    \caption{original graph $\G$, and edge configurations $\G(\Ub_1)$, $\G(\Ub_2)$, $\G(\Ub_3)$, $\G(\Ub_4)$  that may occur due to  adversarial intervention.
    }
    \label{example_k4}
\end{figure}
In this example, the largest common cliques are the single vertices and, in this case, $\x\T \Ub_ i \x  =\che$ for all $i\in \irg 14$ and for each $\x=\e_i$, the characteristic vector associated with a single vertex $i\in V$. However, if $\x\T=\frac 14[1,1,1,1]$, we have $  \x\T \Ub_i \x =  \frac 58 > \che$ for all $i \in \irg 14$. Hence, in general, the characteristic vector $\frac{1}{|C|}\sum\limits_{i\in C} \e_i$
of the largest common clique $C$ is not an optimal solution to the problem~\eqref{grob}.
 
In the previous example, we observed that for an optimal solution  $\x$ to the problem~$\eqref{grob}$, its support $\supp(\x)$ need not be a common clique for all the modifications of $\G$. This motivates the following strategy: adding (in fact, subtracting, as we are maximizing in $\x$) a term penalizing deviations from the complete graph $\KK_n$. This will enable a complete characterization of (even) global solutions in terms of maximum common cliques across $\UU$. Our proof strategy for obtaining this result is inspired by~\cite{hungerford2019general}. However, major changes are necessary due to nonsmoothness.

\subsection{Contributions}
\label{subsec:contributions}
The contributions of this paper are therefore twofold:

\begin{itemize}

\item First, we introduce a novel penalized game-theoretical program that extends the regularized continuous maximum-clique reformulation to adversarial settings. This nonsmooth and nonconvex mathematical model introduces a penal\-ty-based mechanism and a semicontinuous relaxation that together enforce the combinatorial structure of the maximum common clique across all available graph realizations.
Within this framework, we further define the new concept of \emph{stable global solution}, i.e., a solution that persists when the parameters of the penalized model are slightly perturbed.
We establish a rigorous equivalence result showing that stable global solutions of the penalized continuous program correspond exactly to the maximum common cliques in the discrete model and vice versa. This result provides the first exact continuous characterization of the maximum clique problem under adversarial uncertainty, thus extending the meanwhile classical correspondence to an adversarial context.

\item  Second, we develop a first-order and projection-free algorithm tailored to the nonsmooth and nonconvex structure of the proposed formulation, which is particularly well suited to the underlying adversarial setting \footnote{as we will see, the seeker, in the considered framework, interacts with the adversary only through an oracle that returns information related to suitably defined worst-case graph realizations for a given node selection.}. Furthermore, this kind of methods can be easily embedded into a global optimization algorithmic framework based on multistart or local search/basin hopping strategies (see, e.g., \cite{grosso2007population,leary2000global,locatelli2013global} for further details on these classes of algorithms). Since the proposed formulation is nonsmooth, the general theory of differentiation for Lipschitz-continuous functions plays a crucial role in the algorithmic development. Contributions in this area were made by Clarke and Goldstein, as presented in \cite{bagirov2014introduction}. In particular, Goldstein’s foundational work \cite{Goldstein1977} on nonsmooth unconstrained optimization problems inspired some of the theoretical tools needed to analyze our approach. Thanks to those tools, we thus establish  global convergence sublinear rates for the proposed method. Specifically, we demonstrate that employing a Goldstein-like subdifferential with fixed accuracy leads to an approximate Goldstein stationary point, whereas increasing accuracy guarantees convergence to a Clarke stationary point. Extensive numerical experiments further demonstrate that our projection-free algorithm, even when embedded into a quite basic multistart framework, consistently identifies large common cliques of adversarially perturbed networks. 
\end{itemize}

The paper is organized as follows. In Section~\ref{section:mcc}, we develop a continuous, nonsmooth formulation of the maximum common clique problem, and we provide an equivalence between the stable global minimizers of the proposed formulation and the maximum common cliques. In Section~\ref{section:alg},  a new projection-free algorithm is introduced to compute local solutions to the nonsmooth formulation. Section~\ref{section:experiment} reports some experimental results, and in Section~\ref{section:conclusion}, we delineate the conclusions of the paper.

An implementation of the code presented in this paper, along with the tested instances, can be found at the following link: \href{https://github.com/GiovanniSpisso/MCP_under_Adversarial_Uncertainty}{github.com/Maximum\_Clique\_Problem}.

\section{Exact penalization leads to the maximum common clique}
\label{section:mcc}

\subsection{Adding two penalty terms; epigraphic QCQP formulation}
\label{subsec:penalty}

Denote by $\Eb := \e\e\T$ the $n\times n$ all-ones matrix. Then $\KK_n$ has adjacency matrix $\Ab_{\KK_n} = \Eb-\Ib_n$ and is generated by the densest $\overline\Eb:= \Eb-\che \Ib_n= \Ab_{\KK_n}+ \che \Ib_n \in \overline\UU$, in that $\KK_n=\G(\overline \Eb)$. Now for a positive parameter $\beta>0$, we consider the quadratic form
\begin{align}\label{deffU}
& \x\T \Ub \x - \beta  \x\T (\overline \Eb - \Ub) \x \\
& = \x\T \biggl [(1+\beta)\Ub - \beta  \overline \Eb \biggl ]\x  \, ,
\end{align}
an affine combination of the quadratic forms generated by realized $\Ub$ and the densest $\overline \Eb$. Rewritten, we have
\begin{equation}\label{add_penalty}
 \x\T  \Ub \x - \beta \sum_{\substack{i, j \\ i \ne j}} (1 -  \Ub_{i j})x_i x_j \, .
\end{equation}
We also add another set of variables, another penalty term  and a new set of constraints:
\begin{equation}\label{add_second_penalty}
f_\Ub (\x, \y)  :=   \x\T  \Ub \x - \beta \x\T (\overline \Eb - \Ub) \x  + \frac{1}{2 \gamma} \norm{\y}^2\, .
\end{equation}
such that \begin{align*}
    &  \varepsilon y_i \leq x_i \leq y_i \, , \, i \in V\, ,  \\
    & y_i \in \{0,1\}\, , \, i \in V\, ,
\end{align*}
where $\gamma>0$, $0<\varepsilon \leq 1/n$ and $n$ is the order of the graph $\G$.
We observe that, in this way, \begin{align*}
    & y_i = 0 \iff x_i = 0\,; \\
    & y_i = 1 \iff x_i \geq \varepsilon \,.
\end{align*}
In other words, we look at the $x_i$ as semicontinuous variables~\cite{Sun2013}.
We define
 $$F(\x, \y) :=  \min_{\Ub \in \UU} f_\Ub (\x, \y)\, ,$$
 to introduce the penalized problem
\begin{equation}\label{original_new_var}
    \begin{split}
        \max & \quad F(\x, \y) \\
        & \text{ s.t. } \x \in \Delta^V \\
        & \qquad  \varepsilon y_i \leq x_i \leq y_i \, , \, i \in V\, ,  \\
    & \qquad y_i \in \{0,1\}\, , \, i \in V\,.
    \end{split}
\end{equation}
By relaxing the integrality in the previous problem, we have 
\begin{equation}\label{original_new_var_relax}
    \begin{split}
        \max & \quad F(\x, \y) \\
        & \text{ s.t. } \x \in \Delta^V \\
        & \qquad  \varepsilon y_i \leq x_i \leq y_i \, , \, i \in V\, ,   \\
    & \qquad 0 \leq y_i \leq 1\, , \, i \in V   \,.
    \end{split}
\end{equation}
We observe that, in this case, \begin{align*}
    &  y_i = 0 \iff x_i = 0\,; \\
    &  y_i = 1 \implies x_i \geq \varepsilon \,.
\end{align*}
So we change the original feasible set $\Delta^V$ into the set
$$
\Delta^V_{\varepsilon} := \{
(\x, \y)\in \R^{2n} : \x\in \Delta^V ,
\begin{aligned}[t]
& \varepsilon y_i \le x_i \le y_i \mbox{ for all } i \in V\\
& \mbox{and } 0 \le y_i \le 1 \mbox{ for all } i \in V \}\, .
\end{aligned}
$$

We also rewrite the original problem in the following way:
$$\max_{(\x,\y)\in \Delta^V_{\varepsilon}} \min_{\Ub \in \UU} f_\Ub(\x, \y)  = - \min_{(\x,\y)\in \Delta^V_{\varepsilon}} \max_{\Ub \in \UU} g_\Ub(\x, \y) $$
with  $g_\Ub(\x, \y) := - f_\Ub(\x, \y)$. Denoting by $G(\x, \y) = \max\limits_{\Ub \in \UU} g_\Ub(\x, \y)$,  problem~\eqref{original_new_var_relax} is rewritten as  (the negative of) a potentially nonsmooth, nonconvex, constrained min-max problem
\begin{equation}\label{original-v2}
    \begin{split}
        -\min & \quad G(\x, \y) \\
         \text{ s.t. } &\quad (\x,\y)\in \Delta^V_{\varepsilon}\,.
    \end{split}
\end{equation}
Now all component functions $g_\Ub$ of $G$ are quadratic forms separable in $\x$ and $\y$, 
with $$\nabla_{\x} g_\Ub(\x, \y) = \Hb_\Ub \x \,,$$
where $\Hb_\Ub = \nabla^2_{\x \x} g_\Ub = 2\biggl [-\Ub  + \beta (\overline \Eb - \Ub) \biggl ]$ and $$\nabla_{\y} g_\Ub(\x, \y) = - \frac{1}{\gamma}  \y\,.$$
 It follows for any $(\x,\y)\in \Delta^V_{\varepsilon}$ by definition of $\overline \Eb =\Eb-\che\Ib_n$ that the gradients w.r.t.~$\x$ equal
\begin{equation}\label{Hux}
\nabla_{\x}  g_\Ub(\x, \y) = \Hb_\Ub \x = \beta (2\e-\x) - 2(1+\beta)\Ub\x\, ,
\end{equation}
an identity we will use later. For instance, we get
$$(\e_h-\e_k)\T \nabla_{\x}  g_\Ub(\x, \y) = \beta (x_k-x_h) - 2(1+\beta)\sum_j [U_{hj}-U_{kj}]x_j\, .$$

Next consider a smooth epigraphic QCQP reformulation of the negative of problem~\eqref{original-v2}, namely of the nonsmooth problem
\begin{equation}\label{neg-formulation}
\min_{(\x,\y)\in \Delta^V_{\varepsilon}} \max_{\Ub\in \UU}
    g_\Ub(\x, \y) \, .
\end{equation}
This epigraphic equivalent of problem~\eqref{neg-formulation} has one more variable, a linear objective and (nonconvex) quadratic, as well as linear, constraints:
\begin{equation}\label{kkt-formulation}
\min_{(\x,\y,\nu)\in \Delta^V_{\varepsilon}\times\R} \lk  \nu :
    g_\Ub(\x,\y) \leq \nu \mbox{ for all } \Ub\in \UU\rk\, .
\end{equation}

Next we will describe and simplify the KKT (first-order) conditions related to optimality of a point $(\x,\y,\nu)$ for problem~\eqref{kkt-formulation}.

\begin{lemma}\label{kktlem}
The Karush/Kuhn/Tucker (KKT) conditions for problem~\eqref{kkt-formulation} are
\begin{itemize}
    \item stationarity conditions \begin{equation}\label{opt}
    \begin{split}
    & \nabla_{\nu}L(\x, \y, \nu; \bm{\lambda}, \bm{\mu}, \bm{\psi}, \bm{\theta}, \bm{\phi}, \bm{\delta}, \sigma) = 1 - \sum\limits_{\Ub\in \UU} \lambda_\Ub = 0\, , \\
    &  \nabla_{\x}L(\x, \y, \nu; \bm{\lambda}, \bm{\mu}, \bm{\psi}, \bm{\theta}, \bm{\phi}, \bm{\delta}, \sigma) = \sum\limits_{\Ub\in \UU} \lambda_\Ub \nabla_{\x} g_\Ub(\x,\y) - \bm{\mu} - \phi+ \delta - \sigma \e = 0\, , \\
    &  \nabla_{\y}L(\x, \y, \nu; \bm{\lambda}, \bm{\mu}, \bm{\psi}, \bm{\theta}, \bm{\phi}, \bm{\delta}, \sigma) = \sum\limits_{\Ub\in \UU} \lambda_\Ub \nabla_{\y} g_\Ub(\x, \y) - \bm{\psi} + \theta + \varepsilon \phi - \delta = 0\, , 
    \end{split}
    \end{equation}
    \item primal and dual feasibility conditions \begin{equation}\label{feas}
    \begin{split}
    & g_\Ub(\x, \y) \leq \nu \quad \mbox{ for all }\Ub\in \UU\, , \\
    & \x\in \Delta^V\, , \\
    & \varepsilon \y \leq \x \leq \y\,, \\
    & 0 \leq \y \leq \e\,,
   \\
    & \lambda_\Ub \geq 0 \quad \mbox{ for all } \Ub\in \UU\, , \\
    & \mu_i. \phi_i, \delta_i, \psi_i, \theta_i \geq 0 \quad \mbox{ for all }i \in V\,, \end{split}
    \end{equation}
    \item complementarity conditions \begin{equation} \label{compl}
    \begin{split}
    & \lambda_\Ub (g_\Ub(\x, \y) - \nu) = 0 \quad \mbox{ for all } \Ub\in \UU\, ,\\
    & \bm{\mu}\T \x = 0\,, \\
    & \bm{\psi}\T \y = 0\,, \\
    & \bm{\theta}\T (\y - \e) = 0\,, \\
    & \bm{\phi}\T (\x - \varepsilon \y) = 0\,, \\
    & \bm{\delta}\T (\x - \y) = 0\,, \\
    \end{split}
\end{equation}
\end{itemize}
Therefore, under Equations~\eqref{opt} to \eqref{compl} we have
$$\bm{\lambda}\in \Delta^\UU := \{\bm{\lambda}\in \R^n_+: \sum_{\Ub \in \UU} \lambda_{\Ub} = 1  \},$$ for the $\bm{\lambda}$-weighted average $\h^x(\x|\bm{\lambda})$ of the gradients $\nabla_{\x} g_\Ub(\x, \y)$
$$\h^x(\x|\bm{\lambda}):= \sum\limits_{\Ub\in \UU}\lambda_\Ub \nabla_{\x} g_\Ub(\x, \y)\in \R^n $$
it holds $\h^x(\x|\bm{\lambda}) - \sigma \e + \delta =\bm{\mu} + \bm{\phi}\in \R^n_+$ and for the $\bm{\lambda}$-weighted average $\h^y(\y|\bm{\lambda})$ of the gradients $\nabla_{\y} g_\Ub(\x, \y)$
$$\h^y(\y|\bm{\lambda}):= \sum\limits_{\Ub\in \UU}\lambda_\Ub \nabla_{\y} g_\Ub(\x, \y)\in \R^n $$
it holds $\h^y(\y|\bm{\lambda}) + \theta + \varepsilon \phi =\bm{\psi} + \bm{\delta} \in \R^n_+$. In particular, $$\sigma=\x\T\h^x(\x|\bm{\lambda})+ \y\T\h^y(\y|\bm{\lambda}) + \e\T \bm{\theta} = 2\nu + \e\T \bm{\theta}  = 2 G(\x, \y) + \e\T \bm{\theta} \, .$$ 

\end{lemma}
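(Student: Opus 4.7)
The plan is to form the Lagrangian of problem \eqref{kkt-formulation}, derive the first-order optimality conditions in the standard way, and then carry out two short algebraic manipulations to obtain the final identities claimed in the lemma. I assign multiplier $\lambda_\Ub\ge0$ to each constraint $g_\Ub(\x,\y)-\nu\le 0$, multipliers $\bm\mu,\bm\phi,\bm\delta,\bm\psi,\bm\theta\ge 0$ to the component-wise inequality constraints $-\x\le0$, $\varepsilon\y-\x\le 0$, $\x-\y\le 0$, $-\y\le 0$ and $\y-\e\le 0$ respectively, and a free multiplier $\sigma$ to the equality $\e\T\x-1=0$ coming from $\x\in\Delta^V$. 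Writing the Lagrangian
\[
L = \nu + \sum_{\Ub\in\UU}\lambda_\Ub(g_\Ub-\nu) - \bm\mu\T\x + \bm\phi\T(\varepsilon\y-\x) + \bm\delta\T(\x-\y) - \bm\psi\T\y + \bm\theta\T(\y-\e) - \sigma(\e\T\x-1),
\]
and differentiating with respect to $\nu,\x,\y$ produces precisely the three stationarity equations in \eqref{opt}. Primal and dual feasibility \eqref{feas} and complementary slackness \eqref{compl} are then just the KKT conditions associated with each primal constraint.

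Having the raw KKT system in place, the three summary claims follow quickly. Because $\nabla_\nu L=0$ gives $\sum_\Ub\lambda_\Ub=1$ and the $\lambda_\Ub$ are nonnegative, we have $\bm\lambda\in\Delta^\UU$. Solving the stationarity condition in $\x$ for $\bm\mu+\bm\phi$ and the one in $\y$ for $\bm\psi+\bm\delta$, and using that each of the five nonnegative multiplier vectors is in $\R^n_+$, yields the two claimed relations
\[
\h^x(\x|\bm\lambda) - \sigma\e + \bm\delta = \bm\mu+\bm\phi\in\R^n_+, \qquad \h^y(\y|\bm\lambda)+\bm\theta+\varepsilon\bm\phi = \bm\psi+\bm\delta\in\R^n_+.
\]

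For the final scalar identity I take inner products of the $\x$- and $\y$-stationarity conditions with $\x$ and $\y$ respectively. From complementarity, $\bm\mu\T\x=\bm\psi\T\y=0$, and $\bm\phi\T(\x-\varepsilon\y)=\bm\delta\T(\x-\y)=\bm\theta\T(\y-\e)=0$, so
\[
\x\T\h^x(\x|\bm\lambda)=\sigma\e\T\x+\bm\phi\T\x-\bm\delta\T\x = \sigma+\varepsilon\bm\phi\T\y-\bm\delta\T\y,
\]
using $\e\T\x=1$, while $\y\T\h^y(\y|\bm\lambda)=-\bm\theta\T\y-\varepsilon\bm\phi\T\y+\bm\delta\T\y=-\e\T\bm\theta-\varepsilon\bm\phi\T\y+\bm\delta\T\y$. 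Summing cancels the $\varepsilon\bm\phi\T\y$ and $\bm\delta\T\y$ terms and gives $\sigma=\x\T\h^x(\x|\bm\lambda)+\y\T\h^y(\y|\bm\lambda)+\e\T\bm\theta$. Finally, since each $g_\Ub$ is a homogeneous quadratic form in $(\x,\y)$, Euler's identity for degree-$2$ homogeneity gives $\x\T\nabla_\x g_\Ub+\y\T\nabla_\y g_\Ub=2g_\Ub$, and complementary slackness $\lambda_\Ub(g_\Ub-\nu)=0$ combined with $\sum_\Ub\lambda_\Ub=1$ yields $\sum_\Ub\lambda_\Ub g_\Ub=\nu=G(\x,\y)$. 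Multiplying by $\lambda_\Ub$ and summing, we obtain $\x\T\h^x(\x|\bm\lambda)+\y\T\h^y(\y|\bm\lambda)=2\nu$, and the advertised formula $\sigma=2\nu+\e\T\bm\theta=2G(\x,\y)+\e\T\bm\theta$ follows.

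The only delicate point, and the step I expect to need the most care, is bookkeeping of signs across the Lagrangian (in particular the sign convention for $\sigma$) so that the three stationarity equations agree verbatim with \eqref{opt}; everything else is a routine application of complementarity together with Euler's relation for homogeneous quadratics.
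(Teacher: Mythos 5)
Your proposal is correct and follows essentially the same route as the paper: the identical Lagrangian (your $\bm\phi\T(\varepsilon\y-\x)$ is just the paper's $-\bm\phi\T(\x-\varepsilon\y)$), the same inner products of the $\x$- and $\y$-stationarity conditions combined with the complementarity relations $\bm\phi\T\x=\varepsilon\bm\phi\T\y$, $\bm\delta\T\x=\bm\delta\T\y$, $\bm\theta\T\y=\e\T\bm\theta$ to extract $\sigma$, and the same appeal to Euler's identity for the degree-two homogeneous forms $g_\Ub$. The one step you compress, $\nu=G(\x,\y)$, is exactly the paper's closing one-liner (primal feasibility gives $G(\x,\y)\le\nu$, while $\bm\lambda\in\Delta^\UU$ forces some $\lambda_\Ub>0$, hence $g_\Ub(\x,\y)=\nu$ and $G(\x,\y)\ge\nu$), so nothing essential is missing.
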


\begin{proof}
The Lagrangian function of the problem~\eqref{kkt-formulation} is
\begin{align*}
L&(\x, \y, \nu; \bm{\lambda}, \bm{\mu}, \bm{\psi}, \bm{\theta},
   \bm{\phi}, \bm{\delta}, \sigma) \\
&= \nu + \sum_{\Ub\in \UU} \lambda_\Ub [g_\Ub(\x, \y) - \nu]
   - \bm{\mu}\T \x - \bm{\psi}\T \y + \bm{\theta}\T (\y - \e) \\
&\quad - \bm{\phi}\T (\x - \varepsilon \y)
   + \bm{\delta}\T (\x - \y)
   - \sigma (\e\T \x - 1)\, .
\end{align*}
Therefore the KKT conditions have the form specified. Moreover, the first condition in~\eqref{opt} and dual feasibility of $\bm{\lambda}$ together give $\bm{\lambda}\in \Delta^\UU$.
Further, $\h^x(\x|\bm{\lambda}) - \sigma \e + \delta =\bm{\mu} + \bm{\phi}$ and $\h^y(\y|\bm{\lambda}) + \theta + \varepsilon \phi =\bm{\psi} + \bm{\delta}$ are exactly the conditions on $\bm{\lambda}$ in~\eqref{opt}, which in turn implies via $\x\in \Delta^V$ and~\eqref{compl} 
\begin{equation}\label{avgr}
\begin{split}
\sigma & = \sigma\, \x\T\e = \x\T\h^x(\x|\bm{\lambda}) +\x\T \bm{\delta} - \x\T\bm{\mu}- \x\T\bm{\phi}  \\
& = \x\T\h^x(\x|\bm{\lambda})+\x\T \bm{\delta} - \x\T\bm{\phi} \\
& = \x\T\h^x(\x|\bm{\lambda})+\y\T \bm{\delta} - \varepsilon \y\T\bm{\phi}  \\
& = \x\T\h^x(\x|\bm{\lambda})+ \y\T\h^y(\y|\bm{\lambda})  + \y\T \bm{\theta}-  \y\T \bm{\psi} \\
& =  \x\T\h^x(\x|\bm{\lambda})+ \y\T\h^y(\y|\bm{\lambda})  + \e\T \bm{\theta} \, .
\end{split}
\end{equation}
Moreover, since $g_\Ub$ are quadratic forms (homogeneous of degree 2), we have by Euler's theorem $\x\T\nabla_{\x} g_\Ub (\x, \y) + \y\T\nabla_{\y} g_\Ub (\x, \y) = 2 g_\Ub(\x, \y)$, which in conjunction with the other complementarity constraint of~\eqref{compl}, and the fact that $\bm{\lambda}\in \Delta^\UU$, entails
\begin{align*}
    \nu & =  \sum\limits_{\Ub\in \UU} \lambda_\Ub g_\Ub(\x, \y) = \che \sum\limits_{\Ub\in \UU} \lambda_\Ub \x\T\nabla_{\x} g_\Ub(\x, \y) + \che \sum\limits_{\Ub\in \UU} \lambda_\Ub \y\T\nabla_{\y} g_\Ub(\x, \y) \\
    & = \che [\x\T \h^x(\x|\bm{\lambda}) +  \y\T \h^y(\y|\bm{\lambda})]= \che (\sigma - \e\T \bm{\theta}) \, ,
\end{align*}
the last equation coming from~\eqref{avgr}. Finally, feasibility of $(\x, \y,\nu)$ means $G(\x, \y)\le\nu$ but on the other hand, for all $\lambda_\Ub>0$ we have $g_\Ub (\x, \y)=\nu$ by~\eqref{compl},
and at least one such $\lambda_\Ub>0$  exists since $\bm{\lambda}\in \Delta^\UU$. So $G(\x, \y) \ge g_\Ub(\x, \y)=\nu$, which establishes $G(\x, \y)=\nu$ under the KKT conditions.

\end{proof}

\subsection{The role of common cliques and their characteristic vectors}
\label{subsec:common_cliques}

Let $S \subseteq V$ be a nonempty subset of nodes, and let $\Delta^V_{\varepsilon}(S)$  defined by
    $$\Delta^V_{\varepsilon}(S) := \{(\x, \y) \in \Delta^V_{\varepsilon}: \supp(\x) = \supp(\y) \subseteq S \}\, .$$
    We consider the system of all common cliques across $\UU$,
    $$\CC_\UU :=\lk C\subseteq V: C\mbox{ is a clique in }\G(\Ub) \mbox{ for all }\Ub\in \UU\rk\, ,$$
    and put

\begin{equation}\label{delta0}
\begin{split}
\Delta^0_{\varepsilon} & := \bigcup_{C \in \CC_\UU}\Delta^V_{\varepsilon}(C) \\
& = \{ (\x, \y) \in \Delta^V_{\varepsilon} : \supp(\x) = \supp(\y) \text{ is a common clique across } \UU \}\, .
\end{split}
\end{equation}

As usual, we will call an element $C\in\CC_\UU$ a {\em maximal common clique} if it is maximal w.r.t.~set inclusion, i.e., there is no other $\tilde C\in \CC_\UU$ containing $C$ as a subset.

\begin{proposition}\label{foco}
Let $\x(C)$ be a characteristic vector of a maximal common clique $C\in \CC_\UU$ and define $\nu:=G(\x(C), \y)$, where $(\x(C), \y) \in \Delta^V_{\varepsilon}$ and $\y \in \{0,1\}^n$. Then $(\x(C),\y,\nu)$ satisfies the first-order necessary conditions for local
optimality to problem~\eqref{kkt-formulation}, namely~\eqref{opt}, \eqref{feas}, and~\eqref{compl}, if $\beta \ge \frac{|\UU|}2-1$.
\end{proposition}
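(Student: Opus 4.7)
The plan is to exhibit explicit KKT multipliers for $(\x(C), \y, \nu)$ and verify all three blocks~\eqref{opt}, \eqref{feas}, \eqref{compl}. Primal feasibility is built into the construction: since $\varepsilon\le 1/n \le 1/|C|$, $\x(C)\in\Delta^V$ and $\y\in\{0,1\}^n$ supported on $C$ give $(\x(C),\y)\in\Delta_\varepsilon^V$. The foundational algebraic observation is that, because $C$ is a clique in \emph{every} $\G(\Ub)$, a direct computation yields $\x(C)\T \Ub \x(C) = \x(C)\T \overline \Eb \x(C) = 1-\tfrac{1}{2|C|}$; hence the penalty term $\x(C)\T(\overline \Eb-\Ub)\x(C)$ vanishes uniformly across $\UU$. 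Consequently $g_\Ub(\x(C),\y)$ is independent of $\Ub$ and equals $\nu$, so the complementarity $\lambda_\Ub(g_\Ub-\nu)=0$ holds for \emph{any} $\bm{\lambda}\in\Delta^\UU$. I would therefore adopt the uniform choice $\lambda_\Ub=1/|\UU|$.

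Next, I would evaluate $\nabla_\x g_\Ub(\x(C),\y)$ via~\eqref{Hux}. Clique-ness of $C$ in $\G(\Ub)$ forces $[\Ub\x(C)]_i=(2|C|-1)/(2|C|)$ for $i\in C$, so $[\nabla_\x g_\Ub(\x(C),\y)]_i=-2+1/|C|$ independently of $\Ub$. For $i\notin C$, one has $[\Ub\x(C)]_i=n_i(\Ub)/|C|$, where $n_i(\Ub):=|\{j\in C:\{i,j\}\in E_\Ub\}|$, giving $[\nabla_\x g_\Ub(\x(C),\y)]_i=2\beta-\tfrac{2(1+\beta)}{|C|}n_i(\Ub)$. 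Averaging with the uniform $\bm{\lambda}$ produces $h^x_i(\x(C)|\bm{\lambda})=-2+1/|C|$ for $i\in C$ and $h^x_i(\x(C)|\bm{\lambda})=2\beta-\tfrac{2(1+\beta)}{|C|}\bar n_i$ for $i\notin C$, where $\bar n_i:=\tfrac{1}{|\UU|}\sum_{\Ub\in\UU}n_i(\Ub)$.

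The remaining multipliers are then forced by the active primal constraints. Setting $\sigma:=-2+1/|C|$, I would take $\mu_i=\phi_i=\delta_i=\psi_i=0$ and $\theta_i:=1/\gamma$ for every $i\in C$, and $\phi_i=\delta_i=\theta_i=\psi_i=0$ together with $\mu_i:=h^x_i(\x(C)|\bm{\lambda})-\sigma$ for every $i\notin C$. A direct substitution then verifies both stationarity identities in~\eqref{opt} (using $\nabla_\y g_\Ub=-\y/\gamma$ so that $-1/\gamma+\theta_i=0$ on $C$ and $0=0$ off $C$) and all five complementarity products in~\eqref{compl} (using $x_i=0$, $y_i=0$ on $V\setminus C$ and $\mu_i=\phi_i=\delta_i=\psi_i=0$ on $C$ together with $y_i=1$ on $C$). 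Dual feasibility of every multiplier is then immediate \emph{except} for the nonnegativity of $\mu_i$ when $i\notin C$.

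This last nonnegativity is the main obstacle and the only step where both maximality of $C$ and the hypothesis $\beta\ge |\UU|/2-1$ are used. A short rearrangement shows that $\mu_i\ge 0$ is equivalent to $|C|-\bar n_i\ge 1/(2(1+\beta))$. Maximality of $C$ forces, for every $i\notin C$, the existence of at least one $\Ub^\star\in\UU$ with $n_i(\Ub^\star)\le |C|-1$; otherwise $C\cup\{i\}$ would itself belong to $\CC_\UU$, contradicting maximality of $C$. Combined with the trivial bound $n_i(\Ub)\le|C|$ for the remaining $\Ub$, uniform averaging yields $\bar n_i\le |C|-1/|\UU|$, and the assumption $\beta\ge|\UU|/2-1$, i.e.\ $1/|\UU|\ge 1/(2(1+\beta))$, closes the inequality. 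The marginal cases $|C|=1$ and $\varepsilon=1/n=1/|C|$, where some primal constraints bind and certain multipliers are not uniquely determined, leave additional slack but are accommodated by the same construction (in particular $\sigma=-1$ and $\theta_{i_0}=1/\gamma$ still work when $C=\{i_0\}$).
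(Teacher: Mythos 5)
Your proposal is correct and takes essentially the same route as the paper's own proof: the identical multiplier construction ($\sigma=\frac{1}{|C|}-2$, $\bm{\theta}=\frac{1}{\gamma}\y$, $\bm{\phi}=\bm{\delta}=\bm{\psi}=\oo$, $\bm{\mu}=\h^x-\sigma\e$ off $C$, uniform $\bm{\lambda}$), with your averaged-degree bound $\bar n_i\le |C|-\frac{1}{|\UU|}$ being an equivalent repackaging of the paper's decomposition of $\UU$ into $\UU_i^+$ and $\UU_i^-$, and both arguments arrive at the same threshold $\beta\ge\frac{|\UU|}{2}-1$ via the same use of maximality of $C$.
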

\begin{proof}
The identities in Lemma~\ref{kktlem} suggest already the choice of dual variables $(\bm{\lambda}, \bm{\mu}, \bm{\psi}, \bm{\theta}, \bm{\phi}, \bm{\delta}, \sigma)\in \Delta^\UU\times\R^n_+\times\R^n_+\times\R^n_+\times\R^n_+\times\R^n_+\times \R$; but first observe that for all
$(i,\Ub)\in C\times \UU$ we have
\begin{align*}
[\nabla_{\x} g_\Ub(\x(C), \y)]_i &= [\Hb_\Ub\x(C)]_i=
2\e_i\T[\beta(\overline \Eb-\Ub)- \Ub]\x(C) \\
& = -2[\Ub\x(C)]_i=x_i(C)-2 =\frac 1{|C|}-2
\end{align*} 
and $$[\nabla_{\y} g_\Ub(\x(C), \y)]_i= - \biggl [\frac{1}{\gamma} \y \biggl ]_i=-
\frac{1}{\gamma} \e_i\T \y = -\frac{1}{\gamma} y_i\,.$$ Since $(\x(C), \y) \in \Delta^V_{\varepsilon}$, then $\supp(\x(C)) = \supp(\y)$ and since $\y \in \{0,1\}^n$, $y_i = 1$ $\forall i \in C$.
Therefore $h_i^x(\x(C)|\bm{\lambda})=\frac 1{|C|}-2$ and $h_i^y(\y|\bm{\lambda})=-\frac 1{\gamma}$  for any $\bm{\lambda}\in \Delta^\UU$ and for all $i\in C$. Moreover, observe that
\begin{align*}
\nu &= G(\x(C), \y)=g_\Ub(\x(C), \y)= \che \x(C)\T\Hb_\Ub\x(C) - \frac{1}{2 \gamma } \norm{\y}^2 \\
&= \frac1{2|C|}-1 - \frac{1}{2 \gamma } |C| \quad\mbox{ for all }\,\Ub\in \UU\, ,
\end{align*}
so that defining $\sigma := \frac1{|C|}-2\in \R$ and $\bm{\theta} := \frac{1}{\gamma} \y \in \R^n_+$, it results $\nu = \che (\sigma - \e\T \bm{\theta})$. We also define $\bm{\phi} = \bm{\delta} = \bm{\psi}  := \oo  \in \R^n$, with $\oo$ the all-zeros vector, and $\bm{\mu}:=\h(\x(C)) - \sigma\e  \in \R^n$,  
so that $\supp(\bm{\mu})\cap C=\emptyset$, where $\h(\x(C)) := \nabla_{\x} g_\Ub(\x(C), \y)$. Indeed, for any $i \in C$, by above reasoning, $h_i(\x(C)) = \sigma$. In this way, it remains to show that $\mu_i\ge 0$  for all $i\in V\setminus C$.
Define
$$ {\rm deg}_\Ub(i|C):=|C|[\Ub\x(C)]_i\le |C|\, ,$$
which counts the edges in $\G(\Ub)$ linking $C$ to an outside vertex $i\in V\setminus C$. Then for all $\Ub\in \UU$ we have
$$[\nabla_{\x} g_\Ub(\x(C), \y)]_i = 2\left[\beta (1-\frac {{\rm deg}_\Ub(i|C)}{|C|})-\frac {{\rm deg}_\Ub(i|C)}{|C|}\right] \, ,$$
which for large enough $\beta> 0$ can be negative only if ${\rm deg}_\Ub(i|C)=|C|$, i.e., if $C\cup\{i\}$ is a clique in $\G(\Ub)$ as well, in which case
$[\nabla_{\x} g_\Ub(\x(C), \y)]_i = -2$ and $[\nabla_{\x} g_\Ub(\x(C), \y)]_i -\sigma = -\frac 1{|C|}$. But since $C$ is a maximal common clique, for all $i\in V\setminus C$ there must be at least one  $\Ub\in \UU$ such that $C\cup\{i\}$ is not a clique in $\G(\Ub)$. In other words, if we decompose
$$\UU = \UU_i^+ \cup \UU_i^-\quad\mbox{ with }\quad \UU_i^+:= \{\Ub\in\UU: {\rm deg}_\Ub(i|C)<|C|\}\, \mbox{ and }\, \UU_i^-:=\UU\setminus \UU_i^+\,,$$
then $\UU_i^+\neq\emptyset$ for all $i\in V\setminus C$, and hence there must be $\bm{\lambda}\in \Delta^\UU$ such that $$\mu_i := \sum_{\Ub\in \UU_i^+} 2 \lambda_\Ub  \left[\beta (1-\frac {{\rm deg}_\Ub(i|C)}{|C|})-\frac {{\rm deg}_\Ub(i|C)}{|C|}-\nu - \frac{1}{2 \gamma} |C| \right]- \frac 1{|C|} \, \sum_{\Ub\in \UU^-_i} \lambda_\Ub \ge 0\, ,$$
if $\beta$ is chosen large enough, which will establish the claim by Lemma~\ref{kktlem}.
Let us now provide
the quantitative estimate for large enough $\beta$, and a suitable choice for $\bm{\lambda}\in \Delta^\UU$. Beforehand note that~\eqref{compl} is satisfied for any $\bm{\lambda}\in \Delta^\UU$ (and above defined $\bm{\mu}$), if $\supp(\bm{\lambda})\subseteq C$.
For any $\Ub\in \UU_i^+$, we have $1-\frac {{\rm deg}_\Ub(i|C)}{|C|} \ge \frac 1{|C|} $ and thus, since $\nu=\frac 1{2|C|}-1- \frac{1}{2 \gamma } |C| $, we have 
\begin{align*}
\beta (1-\frac {{\rm deg}_\Ub(i|C)}{|C|})-\frac {{\rm deg}_\Ub(i|C)}{|C|}- \nu - \frac{1}{2 \gamma } |C| 
&= (1+\beta ) (1-\frac {{\rm deg}_\Ub(i|C)}{|C|}) -\frac 1{2|C|} \\ &\ge \frac 1{|C|}(\che + \beta)
\end{align*}
and therefore
$$\sum_{\Ub\in \UU_i^+} 2 \lambda_\Ub  \left[\beta (1-\frac {{\rm deg}_\Ub(i|C)}{|C|})-\frac {{\rm deg}_\Ub(i|C)}{|C|}-\nu\right]\ge \frac 1{|C|}(1 + 2\beta) \sum_{\Ub\in \UU_i^+}  \lambda_\Ub \, .$$
On the other hand,
$$\frac 1{|C|}\, \sum_{\Ub\in \UU^-_i} \lambda_\Ub = \frac 1{|C|}-  \frac 1{|C|}\sum_{\Ub\in \UU_i^+}  \lambda_\Ub   \, ,$$
so that
$$\frac 1{|C|}(1 + 2\beta)\sum_{\Ub\in \UU_i^+}  \lambda_\Ub -\frac 1{|C|}\, \sum_{\Ub\in \UU^-_i} \lambda_\Ub = \frac 2{|C|}(1 + \beta)\sum_{\Ub\in \UU_i^+}  \lambda_\Ub  -\frac 1{|C|}\, .$$
Multiplying by $|C|$ we arrive at the condition
$$2(1 + \beta)\sum_{\Ub\in \UU_i^+}  \lambda_\Ub \ge 1 \, .$$
As we know $\UU_i^+ \neq\emptyset$ for all $i\in V\setminus C$, we may try with all $\lambda_\Ub=\frac 1{|\UU|}>0$, so that $\sum_{\Ub\in \UU_i^+}  \lambda_\Ub\ge \frac 1{|\UU|}$.
Therefore this condition can be satisfied if $2(1 + \beta)\ge |\UU| $ or $\beta \ge \frac{|\UU|}2-1$. Other choices of $\bm{\lambda}\in \Delta^\UU$ with smaller $\rho=\min\limits_i\min\limits_{\Ub\in \UU^+_i} \lambda_\Ub>0$ may refine this estimate, replacing $|\UU|$ with $1/\rho$. \end{proof}

As is well known, the KKT conditions do, in general, not imply local optimality in nonconvex problems like~\eqref{kkt-formulation}. In the sequel we will show that in contrast to this general situation, here they are indeed sufficient even for local optimality 
in the restricted problem $\min_{(\x, \y)\in \Delta^0_{\varepsilon}} G(\x, \y)$.

Next, given any  common clique $C \in \CC_\UU$, we first consider the restricted problem
\begin{equation}\label{pb_face}
    \begin{split}
        \min & \quad G(\x, \y) \\
        & \text{ s.t. } (\x, \y) \in \Delta^V_{\varepsilon}(C)\,.
    \end{split}
\end{equation}
and will show that the unique optimal solution to problem~\eqref{pb_face} is the characteristic vector of $C$, and this is the only local minimizer over $\Delta^V_{\varepsilon}(C)$. This follows from the following proposition which is an extension of \cite[Proposition 1]{hungerford2019general} to our problem. We also address the KKT condition without invoking constraint qualifications for Equation~\eqref{kkt-formulation}. But before we introduce the (nonempty and finite) set of matrices active at a point $(\x,\y)\in\Delta^V_{\varepsilon}$:
\begin{equation}\label{actmat}
\begin{split}
\UU(\x,\y) &:=\{ \Ub\in \UU : g_\Ub(\x,\y) = G(\x,\y)\} \\
&= \{ \Ub \in \UU : g_\Ub(\x,\y)-g_{\Ub'}(\x,\y) \ge 0\mbox{ for all }\Ub'\in \UU\}\, ,
\end{split}\end{equation}and let us observe the following useful result:

\begin{lemma}\label{lemma1}
   Let  $(\x, \y) \in \Delta^0_{\varepsilon}$. Then
   for all $\Ub\in \UU$ we have
    \item  
    $$g_\Ub (\x, \y) = -\x\T\Ub \x - \frac{1}{2 \gamma} \norm{\y}^2= \che {\norm{\x}}_2^2-1 - \frac{1}{2 \gamma} \norm{\y}^2= G(\x, \y)\, .$$ 
\end{lemma}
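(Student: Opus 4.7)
The plan is to exploit the clique structure of $\supp(\x)=\supp(\y)$ to show that, on $\Delta^0_{\varepsilon}$, the penalty term $\beta\,\x\T(\overline\Eb-\Ub)\x$ vanishes for every $\Ub\in \UU$, and then to compute the remaining quadratic form $\x\T\Ub\x$ in closed form using that $\e\T\x=1$.

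More concretely, I would first rewrite $\overline\Eb-\Ub=\Eb-\Ib_n-\ad_{\G(\Ub)}$, so that $\x\T(\overline\Eb-\Ub)\x$ equals twice the sum $\sum_{\{i,j\}\in \binom{V}{2}\setminus E_{\Ub}} x_i x_j$ over the non-edges of $\G(\Ub)$. Since $(\x,\y)\in\Delta^0_{\varepsilon}$ forces $\supp(\x)$ to lie inside some $C\in\CC_\UU$, every pair $\{i,j\}\subseteq\supp(\x)$ is an edge of $\G(\Ub)$ for all $\Ub\in\UU$, so this sum is zero. From the definition of $g_\Ub$ this immediately yields the first identity $g_\Ub(\x,\y)=-\x\T\Ub\x-\frac{1}{2\gamma}\n{\y}^2$ for every $\Ub\in \UU$.

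Next I would substitute $\Ub=\che\Ib_n+\ad_{\G(\Ub)}$ and, exploiting again that $\supp(\x)$ is a clique in $\G(\Ub)$, compute
\[
\x\T\ad_{\G(\Ub)}\x=\sum_{i\ne j,\ i,j\in\supp(\x)} x_ix_j=(\e\T\x)^2-\n{\x}_2^2=1-\n{\x}_2^2,
\]
so that $\x\T\Ub\x=\che\n{\x}_2^2+1-\n{\x}_2^2=1-\che\n{\x}_2^2$. Plugging this into the first identity produces the second equality. Finally, since the right-hand side is independent of $\Ub$, the value $g_\Ub(\x,\y)$ is the same for every $\Ub\in \UU$; in particular it coincides with $G(\x,\y)=\max_{\Ub\in \UU}g_\Ub(\x,\y)$, giving the third equality and showing also that $\UU(\x,\y)=\UU$ on $\Delta^0_{\varepsilon}$.

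There is no real obstacle here: the result is essentially a bookkeeping computation, and the only point requiring care is the bookkeeping $\overline\Eb-\Ub=\Ab_{\KK_n}-\ad_{\G(\Ub)}$ together with the observation that being in $\Delta^0_{\varepsilon}$ (rather than just $\Delta^V_{\varepsilon}$) is precisely what makes the complement-graph quadratic form vanish simultaneously for every $\Ub\in\UU$. This is the step that justifies why the common-clique hypothesis—and not merely ``clique in some $\G(\Ub)$''—is needed to reach the uniform identity $g_\Ub(\x,\y)=G(\x,\y)$.
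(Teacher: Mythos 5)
Your proposal is correct and takes essentially the same approach as the paper: the paper's proof observes that on the common-clique support $S=\supp(\x)=\supp(\y)$ the principal submatrix identity $\Ub_{S\times S}=\overline\Eb_{S\times S}$ holds, which simultaneously yields your two computations---the vanishing of the penalty term $\x\T(\overline\Eb-\Ub)\x$ and the evaluation $\x\T\Ub\x=(\e\T\x)^2-\che\,\x\T\x=1-\che\,\x\T\x$---so your non-edge bookkeeping via $\Ub=\che\Ib_n+\Ab_{\G(\Ub)}$ is just a combinatorial rephrasing of the same fact. The concluding step, that $\Ub$-independence of the common value forces $g_\Ub(\x,\y)=G(\x,\y)$ (and in fact $\UU(\x,\y)=\UU$), matches the paper's argument exactly.
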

\begin{proof}
Since $S= \supp(\x)= \supp(\y)$ is a clique in $\G(\Ub)$, the principal submatrix $\Ub_{S\times S} = \overline\Eb_{S\times S}$. Thus we have $\x\T\Ub\x = (\e\T\x)^2 - \che \x\T\x = 1- \che\x\T\x$. Moreover, \eqref{Hux} implies
\begin{align*}
    g_\Ub (\x) & = \che \x\T \Hb_\Ub\x - \frac{1}{2 \gamma} \norm{\y}^2 \\
    & =  \x\T[\beta (\overline \Eb- \Ub)- \Ub]\x - \frac{1}{2 \gamma} \norm{\y}^2  \\
    & = 0- \x\T\Ub\x - \frac{1}{2 \gamma} \norm{\y}^2 \\
    & = \che\x\T\x - 1 - \frac{1}{2 \gamma} \norm{\y}^2 \, .
\end{align*}
Hence the result.
\end{proof}

\begin{proposition}\label{prop1}
(a) Let $C\in \CC_\UU$ be a common clique. Then there is a unique local minimizer (hence strict and global) of~\eqref{pb_face}, namely $(\x(C), \y)$ with $\y \in \{0,1\}^n$.\\
(b) Moreover, any KKT-point $(\x, \y, \nu)$ of the QCQP reformulation~\eqref{kkt-formulation} with \\*
$\supp(\x) =\supp(\y)=C\in \CC_\UU$ satisfies $\x=\x(C)$, $\y \in \{0,1\}^n$ and \\*$\nu=G(\x(C), \y)= \frac1{2|C|}-1 - \frac{1}{2 \gamma}|C|$, and $C$ is a maximal common clique.
\end{proposition}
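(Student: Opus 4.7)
The plan is as follows.

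For part~(a), I would invoke Lemma~\ref{lemma1} to replace $G$ on $\Delta^V_\varepsilon(C)\subseteq \Delta^0_\varepsilon$ by the smooth quadratic saddle $G(\x,\y)=\che\|\x\|^2-1-\frac{1}{2\gamma}\|\y\|^2$ and stratify $\Delta^V_\varepsilon(C)$ by the common support $S\subseteq C$. On each stratum $F_S:=\{(\x,\y)\in \Delta^V_\varepsilon(C):\supp(\x)=\supp(\y)=S\}$ the two variables decouple: strict convexity of $\|\x\|^2$ on the open face of $\Delta^V$ carried by $S$ pins the $\x$-minimizer to $\x(S)$, with $\|\x(S)\|^2=1/|S|$, while the constraints $x_i\le y_i\le 1$ together with $\varepsilon\le 1/n\le 1/|S|$ identify $\y=\mathbf{1}_S$ as the unique $\y$-maximizer of $\|\y\|^2$. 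Hence $\min_{F_S}G=\frac{1}{2|S|}-1-\frac{|S|}{2\gamma}$, a strictly decreasing function of $|S|$, which pins the unique global minimizer on $\Delta^V_\varepsilon(C)$ at $(\x(C),\mathbf{1}_C)$. To rule out spurious local minimizers on strict subfaces, I would exhibit, for any $S\subsetneq C$ and any $i\in C\setminus S$, a continuous feasible path that introduces a small positive coordinate at $i$ (with the matching $y_i$) and along which $G$ strictly decreases.

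For the KKT implication in part~(b), the crucial input is that $C$ is a clique in every $\G(\Ub)$, yielding $[\nabla_\x g_\Ub(\x,\y)]_i=x_i-2$ and $[\nabla_\y g_\Ub(\x,\y)]_i=-y_i/\gamma$ for all $i\in C$ and $\Ub\in\UU$; both $\bm\lambda$-weighted averages on $C$ are thus $\bm\lambda$-independent. Combined with $\mu_i=\psi_i=0$ (which follows from $x_i,y_i>0$ for $i\in C$), the stationarity identities of Lemma~\ref{kktlem} simplify on $C$ to
\begin{align*}
x_i-2&=\sigma+\phi_i-\delta_i,\\
y_i/\gamma&=\theta_i+\varepsilon\phi_i-\delta_i.
\end{align*}
Complementarity together with $\varepsilon<1$ excludes $\phi_i>0$ and $\delta_i>0$ simultaneously, as this would force $y_i=0$.

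The technical core is then a three-way case split on $(\phi_i,\delta_i)$ for $i\in C$. Case~A, $\phi_i=\delta_i=0$, yields $x_i=2+\sigma$, constant across $C$, and $\theta_i=y_i/\gamma>0$, so $y_i=1$. Case~C, $\phi_i=0,\delta_i>0$, forces $x_i=y_i$ and $\theta_i>0$, hence $y_i=x_i=1$; the simplex identity then pins $|C|=1$, a trivial instance of Case~A. In Case~B, $\phi_i>0,\delta_i=0$, one has $x_i=\varepsilon y_i\le\varepsilon$ and $\sigma<x_i-2\le\varepsilon-2$, which is incompatible with the presence of Case~A indices (those demand $x_i=2+\sigma\ge\varepsilon$ in order to respect $x_i\ge\varepsilon y_i=\varepsilon$). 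A pure-B configuration with $y_i=1$ for every $i$ would require $|C|\varepsilon=1$, which by $\varepsilon\le 1/n$ and $|C|\le n$ only holds at the boundary $\varepsilon=1/n$, $|C|=n$, where $x_i=\varepsilon=1/|C|$ coincides with $\x(C)$. A pure-B configuration with some $y_i<1$, after solving the $y$-stationarity for the common value $y_i=-\varepsilon\gamma(2+\sigma)/(1-\varepsilon^2\gamma)$, collides with the simplex identity because $|C|\varepsilon\le 1$ forces any such $y_i\ge 1$. Thus every $i\in C$ lies in Case~A, the simplex identity pins $\sigma=1/|C|-2$, and $(\x,\y)=(\x(C),\mathbf{1}_C)$; the value $\nu=\frac{1}{2|C|}-1-\frac{|C|}{2\gamma}$ follows from Lemma~\ref{lemma1}. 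For the maximality of $C$, I would mirror the last step of the proof of Proposition~\ref{foco}: for $i\notin C$, after eliminating $\delta_i=\varepsilon\phi_i-\psi_i$, one obtains $\mu_i=h^x_i-\sigma-(1-\varepsilon)\phi_i-\psi_i$, so $\mu_i\ge 0$ necessitates $h^x_i\ge\sigma$, i.e.,\ $(1+\beta)(|C|-\bar d_i)\ge\che$ with $\bar d_i=\sum_\Ub \lambda_\Ub\,{\rm deg}_\Ub(i|C)$; a non-maximal $C$ would admit some $i\notin C$ with $|C|-\bar d_i=0$, contradicting this bound. The main technical obstacle is the arithmetic of the case split in part~(b); the decisive leverage throughout is the hypothesis $\varepsilon\le 1/n$, which via $|C|\varepsilon\le 1$ is exactly what prevents any mass from escaping Case~A.
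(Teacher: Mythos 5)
Your part~(b) is correct but takes a genuinely different route from the paper. The paper never opens the multiplier casework: at any $(\x,\y)\in\Delta^V_{\varepsilon}(C)$ other than $(\x(C),\bar\y)$ it exhibits the explicit direction $\d_x=\x(C)-\x$, $\d_y=\bar\y-\y$ (first-order strictly feasible and strictly improving, since $\d_x\T\x=\frac1{|C|}-\|\x\|^2\le 0$ and $\d_y\T\y\ge 0$, not both zero) and concludes via Gordan's theorem that even the Fritz--John conditions fail; maximality is then settled by the same primal device with $\d_x=\x(C')-\x(C)$, $\d_y=\e_i$ for $C'=C\cup\lk i\rk$. Your dual analysis is a valid alternative: I verified that $[\nabla_{\x}g_\Ub(\x,\y)]_i=x_i-2$ on $C$ for every $\Ub$, that the A/B/C split is exhaustive ($\phi_i>0$ and $\delta_i>0$ together force $y_i=0$ since $\varepsilon<1$), that A and B cannot coexist ($2+\sigma\ge\varepsilon$ versus $2+\sigma<\varepsilon$), that C forces $|C|=1$, and that pure~B collapses via the chain $1=\varepsilon\sum_{i\in C}y_i\le\varepsilon|C|\le\varepsilon n\le 1$ --- note that this simplex chain alone suffices there and is more robust than your common-value formula, whose denominator $1-\varepsilon^2\gamma$ can vanish or change sign. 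Your maximality argument is also sound: eliminating $\theta_i=0$, $\delta_i=\varepsilon\phi_i-\psi_i$ for $i\notin C$, the requirement $\mu_i\ge 0$ forces $(1+\beta)(|C|-\bar d_i)\ge\che$, while a common clique $C\cup\lk i\rk$ gives $\bar d_i=|C|$ for \emph{every} $\bm{\lambda}\in\Delta^\UU$, so no admissible multipliers exist. What the paper's primal route buys is uniformity (one direction disposes of all non-optimal points and feeds both (a) and (b)); what yours buys is explicit multiplier information, at the price of arithmetic casework.

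Part~(a), as sketched, has a genuine soft spot. The \emph{objective} decouples on $\Delta^V_{\varepsilon}(C)$, but the \emph{feasible set} does not, because of the coupling $\varepsilon y_i\le x_i\le y_i$; so your stratum-wise comparison identifies the global minimizer only, and your escape path can fail if implemented naively. Concretely, if at a point of $F_S$ you make room for the new coordinate by scaling $x_j\mapsto(1-t)x_j$, then every pinned index (those with $x_j=\varepsilon y_j$) forces $y_j\mapsto(1-t)y_j$ as well, and the first-order change of $G$ is $-\|\x\|^2+\frac1\gamma\sum_{j\ \mathrm{pinned}}y_j^2$, which is \emph{positive} whenever $\gamma<\sum_{j\ \mathrm{pinned}}y_j^2/\|\x\|^2$ (e.g.\ $\varepsilon=\gamma=0.1$, $x_1=0.05$, $y_1=0.5$ pinned, $x_2=0.95$, $y_2=1$ slack). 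Moreover, non-optimal points with full support $S=C$ --- say $\x=\x(C)$ but some $y_i<1$ --- are never excluded as local minimizers by your plan. Both defects are repaired at once by the paper's own device: $\Delta^V_{\varepsilon}(C)$ is convex, the segment toward $(\x(C),\bar\y)$ is feasible, and its first-order change $\bigl(\frac1{|C|}-\|\x\|^2\bigr)-\frac1\gamma\sum_{i\in C}y_i(1-y_i)$ is strictly negative at every non-optimal point --- the key being that this direction \emph{increases} $\y$ rather than shrinking it. Alternatively, your own (b)-machinery closes the gap, since $\Delta^V_{\varepsilon}(C)$ is polyhedral, so KKT is necessary for local optimality on the face without any constraint qualification, and your case analysis then applies with $\bm{\lambda}$ trivialized.
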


\begin{proof}
Since $C$ is a common clique across $\UU$,
we see by Lemma~\ref{lemma1} that $$g_\Ub(\x,\y)= G(\x,\y) = \che \x\T\x - (\e\T\x)^2 - \frac{1}{2 \gamma} \y\T\y = \che \x\T\x -1- \frac{1}{2 \gamma} \y\T\y\,,$$ for all $(\x, \y) \in \Delta^V_{\varepsilon}(C)$ and across all $\Ub\in \UU$. Thus $G(\x, \y)$ is strictly convex with respect to $\x$ and strictly concave with respect to $\y$ and additively separable with respect to $\x$ and $\y$, that is we have $G(\x,\y)=G_{\x}(\x)+ G_{\y}(\y)$, with $G_{\x}(\x)=\che \x\T\x -1$ and $G_{\y}(\y)=- \frac{1}{2 \gamma} \y\T\y\ $. Considering that $\supp(\x)=\supp(\y)\subseteq C$, we have that the vector $\y$ such that $\y_i=1$ for all $i\in C$ and zero otherwise is the unique global minimizer for the function $G_{\y}(\y)$ when considering points $\y \in \Yb_C= \{\y\in \R^n_+:\  \supp(\y)\subseteq C,\ \max_j y_j\leq 1\}$, and $\x(C)$ is the unique global minimizer  for the function $G_{\x}(\x)$ when considering points $\x \in \Xb_C= \{\x\in \Delta^V: \supp(\x)\subseteq C \}$.  If we hence consider the point $(\x(C),\y)$ it is easy to see that this is a strict global minimizer for $G(\x,\y)$, when considering points $(\x,\y) \in \Xb_C\times \Yb_C$. Since  $(\x(C),\y)\in \Delta^V_{\varepsilon}(C)$ and $\Delta^V_{\varepsilon}(C)\subset \Xb_C\times \Yb_C$, assertion~(a) is proved.
Next we establish claim~(b); note that for all $(\x,\y,\d_x, \d_y)\in \Delta^V_{\varepsilon}(C)\times\R^n\times\R^n$ with $\supp(\d_x)\subseteq C$ and $\supp(\d_y)\subseteq C$, also the first-order expression $$(\d_x, \d_y)\T\nabla_{\x, \y} g_\Ub(\x, \y) = \lim_{t\searrow 0} \frac 1t [ g_\Ub(\x+t\d_x, \y + t \d_y)- g_\Ub(\x, \y) ] = \d_x\T\x- \frac{1}{\gamma} \d_y\T \y$$ is the same across all $\Ub$.
We now will directly show that for any $(\x, \y)\in \Delta^V_{\varepsilon}(C)\setminus\lk(\x(C), \overline{\y})\rk$, where $\overline{\y} \in \{0,1\}^n$ with $\supp(\overline{\y}) = C$, there is, at the point $(\x,\y,\nu)$ with $\nu= \che \x\T\x -1 -\frac{1}{2\gamma} \y\T\y=G(\x, \y)$, a direction $(\d_x,\d_y,\zeta)\in \R^n\times\R^n\times \R$ with $\supp(\d_x)\subseteq C$ and $\supp(\d_y)\subseteq C$ which is (first-order) strictly improving, i.e., $\zeta < 0$, and first-order strictly feasible, i.e.,
$$(\d_x\T,\d_y\T,\zeta)\T\nabla_{\x,\y,\nu} (g_\Ub (\x, \y)-\nu) = \d_x\T\x-\frac{1}{\gamma} \d_y\T\y - \zeta < 0\quad \mbox{for all }\Ub\in \UU(\x, \y)\, ,$$
as well as $\supp (\x+\d_x)=C$ and $\supp (\y+\d_y)=C$. Indeed, consider $\d_x= \x(C)- \x$  and $\d_y = \overline{\y} - \y$. 
Not both of them can equal the zero vector simultaneously, and 
$$\d_x\T\x= \x(C)\T\x-\x\T\x = \frac 1{|C|}-\x\T\x =\x(C)\T\x(C) -\x\T\x\le0$$
as well as $$\d_y\T\y= \overline{\y}\T\y-\y\T\y \ge0\, ,$$
so that 
$ \zeta:= \frac 12[\d_x\T\x-\frac{1}{\gamma} \d_y\T\y] <0$ as $\x\neq\x(C)$ or $\y\neq\bar\y$ (or both).
Hence
$\d_x\T\x-\frac{1}{\gamma} \d_y\T\y - \zeta < 0$.
Therefore $\x\neq\x(C)$ or $\y\neq\overline{\y}$ implies existence of a solution $(\d_x,\d_y,\zeta)\in\R^n\times\R^n\times \R$ to the system of strict linear inequalities and equations
\begin{equation}\label{strine}
\begin{array}{rcl}
\zeta &< & 0\\
(\d_x\T,\d_y\T, \zeta)\T\nabla_{\x,\y,\nu} (g_\Ub (\x, \y)-\nu) &< &0 \quad\mbox{ for all }\Ub\in \UU(\x,\y)\\
\e_i\T{\d_x}  &> &0 \quad\mbox{ for all } i\in V\setminus\supp(\x)\\
\e\T\d_x &= &0\\
\e_i\T{\d_y}  &> &0 \quad\mbox{ for all } i\in V\setminus\supp(\y)\, .
\end{array}
\end{equation}
The constraints ${d_x}_i>0$ and ${d_y}_i >0$ are obvious for $i\in C\setminus\supp(\x)=C\setminus\supp(\y)$; a continuity argument also validates it for $i\in V\setminus C$.
Anyhow, solvability of~\eqref{strine} implies by Gordan's theorem of the alternative that $(\x,\y,\nu)$ cannot satisfy the Fritz-John conditions and therefore cannot be a KKT point. Note that KKT conditions imply the Fritz-John conditions without any constraint qualifications, they are needed only for the reverse implication. Therefore $(\x(C), \overline{\y})$ with $C\in \CC_\UU$ is the only possible KKT point of~\eqref{kkt-formulation}. 
If $C\in \CC_\UU$ is a common clique but not maximal, then there is an $i\in V\setminus C$ such that $C'=C\cup \{i\}$ is also a common clique. While $(\x(C), \overline{\y})$ is a KKT point for~\eqref{kkt-formulation}, it cannot be one for the QCQP equivalent to $\min \lk G(\x,\y) : (\x, \y)\in\Delta^V_{\varepsilon}(C')\rk$, as $(\x(C'), \overline{\y'})$ is the only one KKT point for the latter problem (with $\overline{\y'} \in \{0,1\}^n$ chosen such that $\supp(\overline{\y}) = C'$). We calculate the first-order improvement
for the  feasible directions
\begin{align*}
\d_x&=\x(C')-\x(C) = {\textstyle\frac1{|C|+1}}\, \e_i - \sum_{j\in C} \left ({\textstyle \frac1{|C|}-\frac 1{|C|+1}} \right )\e_j \\
&=  {\textstyle\frac 1{|C|+1}}\e_i - \sum_{j\in C} \left ({\textstyle \frac1{|C|(|C|+1)}} \right )\e_j\, ,
\end{align*}
$$\d_y = \overline{\y'}  - \overline{\y} = \e_i \,.$$
The  first-order improvement $\d_x\T\x(C)$ along this $\d_x$ at $\x(C)$ equals, as above, 
$$\d_x\T\x(C)= 0 - \sum_{j\in C} \left ({\textstyle \frac 1{|C|^2(|C|+1)}} \right ) <0\, ,$$ and the first-order improvement $\d_y\T\overline{\y}$ along this $\d_y$ at $\overline{\y}$ equals
$$\d_y\T \overline{\y} = 0\,.$$
Hence $(\x(C),\overline{\y})$ cannot be a local solution to $\min \lk G(\x, \y) : (\x,\y)\in\Delta^V_{\varepsilon} (C')\rk$. Neither can it satisfy the KKT conditions for the larger problem as well.
Note that again, we did not need constraint qualifications as we used a direct first-order descent argument. \end{proof}

On the other hand, any characteristic vector $\x(C)$ and any vector $\overline{\y} \in \{0,1\}^n$ such that $\supp(\overline{\y}) = C$ based upon a {\bf maximal} common clique for $\UU$, is indeed satisfying the KKT conditions, as shown in Proposition~\ref{foco}.
Extending the result~\cite[Proposition 2]{hungerford2019general}, we now will show that it even is a local minimizer of $G(\x,\y)$ over $\Delta^0_{\varepsilon}$
in the smooth epigraphic formulation, i.e., problem~\eqref{kkt-formulation} restricted to $\Delta^0_{\varepsilon}$:
\begin{equation}\label{pb_delta0}
\min_{(\x,\y,\nu)\in \Delta^0_{\varepsilon}\times \R} \{ \nu : g_\Ub(\x,\y)\le \nu\mbox{ for all }\Ub\in \UU\}\, .\end{equation} The proof of Proposition~\eqref{prop2} is a verbatim repetition of the one given in the paper \cite{hungerford2019general}, we reported it here for completeness.

\begin{proposition}\label{prop2}
A point $(\x,\y) \in \Delta^0_{\varepsilon}$ is a local minimizer of \eqref{pb_delta0} if and only if $\x = \x(C)$ and $\y = \overline{\y} \in \{0,1\}^n$  for some maximal common clique $C\in \CC_\UU$. Moreover, every local minimizer is strict.
\end{proposition}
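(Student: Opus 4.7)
My plan is to prove the two implications separately, in each case reducing the global problem on $\Delta^0_{\varepsilon}$ to the already-analyzed situation on a single face $\Delta^V_{\varepsilon}(C)$ handled by Proposition~\ref{prop1}. I would first observe that $(\x,\y,\nu)$ is a local minimizer of~\eqref{pb_delta0} if and only if $\nu=G(\x,\y)$ and $(\x,\y)$ is a local minimizer of $G$ over $\Delta^0_{\varepsilon}$, so I may work directly with the nonsmooth program $\min\{G(\x,\y):(\x,\y)\in\Delta^0_{\varepsilon}\}$.

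For the ``if'' direction, I would fix a maximal common clique $C\in\CC_\UU$ and take $\overline{\y}\in\{0,1\}^n$ to be the $0/1$-indicator of $C$. The key step is the local-containment claim: there exists an open neighborhood $N$ of $(\x(C),\overline{\y})$ with $N\cap\Delta^0_{\varepsilon}\subseteq\Delta^V_{\varepsilon}(C)$. To obtain it, I would exploit the semicontinuous constraints $\varepsilon y_i\le x_i\le y_i$, which force $\supp(\x')=\supp(\y')$ for every feasible $(\x',\y')$, together with $x_j(C)=1/|C|>0$ for all $j\in C$: a sufficiently small perturbation therefore satisfies $C\subseteq\supp(\x')\in\CC_\UU$, and maximality of $C$ then forces $\supp(\x')=C$, i.e., $(\x',\y')\in\Delta^V_{\varepsilon}(C)$. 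Proposition~\ref{prop1}(a) identifies $(\x(C),\overline{\y})$ as the unique strict, global minimizer of $G$ over $\Delta^V_{\varepsilon}(C)$, and the local containment transfers strict local minimality to $\Delta^0_{\varepsilon}$.

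For the ``only if'' direction, given a local minimizer $(\x,\y)$ of $G$ over $\Delta^0_{\varepsilon}$, I would set $C:=\supp(\x)=\supp(\y)\in\CC_\UU$; since $\Delta^V_{\varepsilon}(C)\subseteq\Delta^0_{\varepsilon}$, applying Proposition~\ref{prop1}(a) forces $\x=\x(C)$ and $\y=\overline{\y}$. It remains to show that $C$ is maximal. Assuming, for contradiction, that $C':=C\cup\{i_0\}\in\CC_\UU$ for some $i_0\notin C$, I would reuse the feasible direction appearing in the proof of Proposition~\ref{prop1}(b), namely $\d_x:=\x(C')-\x(C)$ and $\d_y:=\e_{i_0}$, and check that the path $(\x_t,\y_t):=(\x(C)+t\d_x,\overline{\y}+t\d_y)$ lies in $\Delta^V_{\varepsilon}(C')\subseteq\Delta^0_{\varepsilon}$ for all small $t>0$: the semicontinuous inequality $x_{i_0,t}\ge\varepsilon y_{i_0,t}$ reduces to $\varepsilon\le 1/(|C|+1)$, guaranteed by $\varepsilon\le 1/n$; the support is $C'$, which is a common clique by assumption. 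By Lemma~\ref{lemma1}, along this path $G(\x_t,\y_t)=\tfrac12\|\x_t\|^2-1-\tfrac1{2\gamma}\|\y_t\|^2$, and a direct derivative computation gives
\begin{equation*}
\left.\tfrac{d}{dt}G(\x_t,\y_t)\right|_{t=0^+}=\x(C)\T\d_x-\tfrac1{\gamma}\overline{\y}\T\d_y=\tfrac1{|C|+1}-\tfrac1{|C|}-0<0\, ,
\end{equation*}
contradicting local minimality. Hence $C$ must be maximal, and strictness is inherited from the ``if'' part.

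The main obstacle will be the local-reduction step in the ``if'' direction: although $\Delta^0_{\varepsilon}=\bigcup_{C'\in\CC_\UU}\Delta^V_{\varepsilon}(C')$ is only a finite union of faces, one must rule out that other faces accumulate at $(\x(C),\overline{\y})$. The semicontinuous constraints tie $\supp(\x)$ and $\supp(\y)$ together and prevent infinitesimal support enlargements, while maximality of $C$ rules out any admissible superset; it is precisely the combination of these two ingredients that enables the reduction to Proposition~\ref{prop1}.
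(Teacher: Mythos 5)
Your proof is correct, and although it shares the paper's backbone --- reducing local optimality on $\Delta^0_{\varepsilon}$ to the single-face result of Proposition~\ref{prop1}(a) --- both implications are executed by a genuinely different mechanism. In the ``if'' direction the paper argues by contradiction: it takes a sequence $(\x^k,\y^k)\in\Delta^0_{\varepsilon}$ converging to $(\x(C),\overline{\y})$ with no larger $G$-value, uses finiteness of $\CC_\UU$ to extract a subsequence lying in a single closed face $\Delta^V_{\varepsilon}(C')$, deduces $C\subseteq C'$ from closedness of that face, and invokes maximality to force $C'=C$, contradicting strictness on the face. Your local-containment claim $N\cap\Delta^0_{\varepsilon}\subseteq\Delta^V_{\varepsilon}(C)$ achieves the same reduction directly: since $x_j(C)=1/|C|>0$ for $j\in C$, supports can only grow in a small neighborhood, and a common clique containing the maximal $C$ must equal $C$; this avoids subsequence extraction and the closedness argument, and produces an explicit neighborhood. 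In the ``only if'' direction the paper obtains maximality for free by embedding $\supp(\x)$ into a \emph{maximal} common clique before applying Proposition~\ref{prop1}(a), whereas you take $C=\supp(\x)$ and prove maximality separately via the feasible descent path $(\x_t,\y_t)\in\Delta^V_{\varepsilon}(C\cup\{i_0\})$ with one-sided derivative $\tfrac1{|C|+1}-\tfrac1{|C|}<0$ --- essentially re-deriving the computation the paper placed at the end of Proposition~\ref{prop1}(b), where a non-maximal $C$ is shown not to yield a local solution on the enlarged face. Your feasibility checks are right (in particular $\varepsilon\le \tfrac1{|C|+1}$ follows from $\varepsilon\le \tfrac1n$), and invoking Lemma~\ref{lemma1} for the smooth expression of $G$ along the path is legitimate since every point of the path lies in $\Delta^0_{\varepsilon}$, so the one-sided derivative exists and the strict decrease contradicts local minimality. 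Net effect: your route is more constructive and quantitative (explicit neighborhood, explicit decrease rate), while the paper's is shorter because it leans on the groundwork of Proposition~\ref{prop1}(b) and on choosing the maximal clique upfront.
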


\begin{proof} 
Let $(\x, \y)$ be a local minimizer of \eqref{pb_delta0}, then, since $(\x,\y) \in \Delta^0_{\varepsilon}$, there exists some maximal common clique $C\in \CC_\UU$ such that $(\x,\y) \in \Delta^V_{\varepsilon}(C)$.  $(\x,\y)$ is a local minimizer of $\eqref{pb_delta0}$, it is also a local minimizer of \eqref{pb_face}, which implies $\x = \x(C)$ and  $\y=\overline{\y} \in \{0,1\}^n$ by Proposition~\eqref{prop1}(a).
    
On the other hand, let $C$ be a maximal common clique $C\in \CC_\UU$  and suppose, by way of contradiction, that $(\x(C),\overline{\y})$ is not a local minimizer of \eqref{pb_delta0}. Then, for every $k \in \mathbb{N}_+$, there exists some $(\x^k, \y^k) \in \Delta^0_{\varepsilon}$ with $0 < || (\x^k, \y^k) - (\x(C)-\overline{\y})||_2 < 1/k$ such that $G(\x^k, \y^k) \leq G(\x(C),\overline{\y})$. Because there are only finitely many sets in the unions in \eqref{delta0}, there must exist some common clique $C'$ and some subsequence $(\x^{k_l}, \y^{k_l})_{l=1}^{\infty} \subseteq (\x^{k}, \y^k)_{k=1}^{\infty}$ such that $(\x^{k_l}, \y^{k_l}) \in \Delta^V_{\varepsilon}(C')$ for $l \geq 1$, with $(\x^{k_l}, \y^{k_l}) \to (\x(C),\overline{\y})$. Hence, $(\x(C),\overline{\y}) \in \overline{\Delta^V_{\varepsilon}(C')} = \Delta^V_{\varepsilon}(C')$, which implies $C = \supp(\x(C)) \subseteq C'$. Because $C$ is maximal, we must have $C = C'$, and thus $(\x^{k_l}, \y^{k_l}) \in \Delta^V_{\varepsilon}(C')=\Delta^V_{\varepsilon}(C) $ for each $l \geq 1$. Thus, $(\x(C),\overline{\y})$ is not a strict local minimizer of \eqref{pb_face}, contradicting Proposition~\eqref{prop1}(a).
\end{proof}

\subsection{Going global: exact penalty parameter values}
\label{subsec:global}

The following results are, to some extent, related to~\cite[Proposition 3, Corollary 1]{hungerford2019general}.  
\begin{proposition}\label{maximum}
    If $\{ C_1,C_2\}\in \CC_\UU$ are common cliques, then $$|C_1| \leq |C_2| \iff G(\x(C_1),\y_1) \geq G(\x(C_2),\y_2)\,,$$ where $\y_1 \in \{0,1\}^n$ with $\supp(\y_1) = C_1$ and $\y_2 \in \{0,1\}^n$ with $\supp(\y_2) = C_2$.
Furthermore, a point $(\x, \y) \in \Delta^0_{\varepsilon}$ is a global minimizer of \eqref{pb_delta0} if and only if $\x = \x(C)$ and $\y \in \{0,1\}^n$  for some maximum common clique $C\in \CC_\UU$.
\end{proposition}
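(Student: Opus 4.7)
The plan is to combine Lemma~\ref{lemma1} with Proposition~\ref{prop2} by first reducing $G$ on $\Delta^0_{\varepsilon}$ to a scalar function of the clique cardinality, then studying its monotonicity.

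First, I would observe that $(\x(C), \y) \in \Delta^V_{\varepsilon}(C) \subseteq \Delta^0_{\varepsilon}$ whenever $C \in \CC_\UU$ and $\y \in \{0,1\}^n$ with $\supp(\y) = C$, so Lemma~\ref{lemma1} applies and yields
$$G(\x(C), \y) = \tfrac{1}{2}\|\x(C)\|_2^2 - 1 - \tfrac{1}{2\gamma}\|\y\|_2^2.$$
Substituting $\|\x(C)\|_2^2 = |C| \cdot |C|^{-2} = 1/|C|$ and $\|\y\|_2^2 = |C|$ (since $\y$ has exactly $|C|$ unit entries supported on $C$), I obtain the closed form
$$G(\x(C), \y) = \frac{1}{2|C|} - 1 - \frac{|C|}{2\gamma} =: h(|C|),$$
where $h(t) = \tfrac{1}{2t} - 1 - \tfrac{t}{2\gamma}$ satisfies $h'(t) = -\tfrac{1}{2t^2} - \tfrac{1}{2\gamma} < 0$ on $(0, \infty)$, so $h$ is strictly decreasing. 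The first equivalence of the proposition then follows immediately: $|C_1| \leq |C_2| \iff h(|C_1|) \geq h(|C_2|) \iff G(\x(C_1), \y_1) \geq G(\x(C_2), \y_2)$.

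For the second claim, I would invoke Proposition~\ref{prop2}: every local minimizer of \eqref{pb_delta0} in $\Delta^0_{\varepsilon}$ is of the form $(\x(C), \overline{\y})$ for some maximal common clique $C \in \CC_\UU$ with $\overline{\y} \in \{0,1\}^n$ and $\supp(\overline{\y}) = C$. Since every global minimizer is in particular a local minimizer, the global minima are attained on this finite family of points, at value $h(|C|)$. By the strict monotonicity of $h$, the minimum value is achieved exactly on those maximal common cliques of maximum cardinality, i.e., the maximum common cliques across $\UU$. Conversely, any maximum common clique is necessarily maximal, so Proposition~\ref{prop2} certifies that $(\x(C), \overline{\y})$ is a (strict) local minimizer, and the monotonicity argument promotes it to a global minimizer.

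I do not anticipate a serious obstacle: the nontrivial content has already been absorbed into Lemma~\ref{lemma1} (which makes $G$ depend only on $\|\x\|_2^2$ and $\|\y\|_2^2$ on $\Delta^0_{\varepsilon}$) and Proposition~\ref{prop2} (which restricts the candidate minimizers to characteristic-vector pairs associated with maximal common cliques). The only point requiring a small amount of care is checking that the contribution of $\|\y\|_2^2$ reinforces, rather than competes with, the $\|\x(C)\|_2^2$ term when $|C|$ varies; this is exactly why both summands in $h'(t)$ are negative and the monotonicity is unconditional in $\gamma > 0$.
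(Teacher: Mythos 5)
Your proposal is correct and follows essentially the same route as the paper: Lemma~\ref{lemma1} gives the closed form $G(\x(C),\y)=\frac{1}{2|C|}-1-\frac{|C|}{2\gamma}$, whose strict monotonicity in $|C|$ yields the first equivalence, and the second claim is obtained exactly as in the paper by combining the local-minimizer characterization of Proposition~\ref{prop2} with this value comparison over the finite family of candidate points. Your explicit verification that $h'(t)=-\frac{1}{2t^2}-\frac{1}{2\gamma}<0$, so the $\y$-penalty reinforces rather than competes with the $\|\x(C)\|_2^2$ term, is a small but welcome elaboration of what the paper leaves implicit in ``the first claim follows.''
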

\begin{proof}
    First of all we observe that for all $C\in \CC_\UU$,  the characteristic vector  $\x(C)$ and the vector $\y \in \{0,1\}^n$ with $\supp(\y) = C$ such that $(\x(C), \y) \in \Delta^0_{\varepsilon}$ satisfy by Lemma~\eqref{lemma1}
   $$G(\x(C), \y) = \che \x(C)\T\x(C) - 1 -\frac{1}{2 \gamma} \y\T \y =   \frac{1}{2 |C|} -1- \frac{1}{2 \gamma} |C|\, .$$
   The first claim follows. To show the second, let $(\x, \y) \in \Delta^0_{\varepsilon}$ be a global minimizer of \eqref{pb_delta0}. Then  $(\x, \y)$ is also a local minimizer of \eqref{pb_delta0}, hence $\x = \x(C)$ and $\y \in \{0,1\}^n$ with $\supp(\y) = C$ for some maximal common clique $C\in \CC_\UU$ by Proposition~\eqref{prop2}. Further, by global optimality of $(\x, \y)$, we have  $G(\x, \y) \leq G(\mathbf{\tilde{x}}, \mathbf{\tilde{y}})$ for every local minimizer $(\mathbf{\tilde{x}}, \mathbf{\tilde{y}}) \ne (\x, \y)$, hence, again by Proposition~\eqref{prop2},  $G(\x(C), \y) \leq G(\x(\overline{C}), \mathbf{\bar{y}})$ with $\mathbf{\bar{y}} \in \{0,1\}^n $ with $\supp(\mathbf{\bar{y}}) = \overline{C}$ for every maximal common clique $\overline{C} \ne C$. The first claim establishes the result.
\end{proof}

Before we dive into the analysis of the problem \eqref{original-v2}, we consider a restricted version of it with $\y\in \{0,1\}^n$, that is

\begin{equation}\label{original-v2-res}
    \begin{split}
        -\min & \quad G(\x, \y) \\
         \text{ s.t. } &\quad (\x,\y)\in \Delta^V_{\varepsilon}\\&\quad \y \in \{0,1\}^n\,,
    \end{split}
\end{equation}
and give a first equivalence result that connects global minima of the restricted problem and maximum common cliques over our family of graphs. So we have 
\begin{theorem}\label{Th:intequiv}
   For any given  $\varepsilon \in \left(0, \frac{1}{n}\right]$ and  $\gamma > 0$, 
   a point $(\x, \y)$  is a global minimizer of \eqref{original-v2-res} with
    $$\beta > \biggl ( 1 - \frac{1}{2 (n-1)} + \frac{1}{2 \gamma} n \biggl) \frac{1}{2\varepsilon^2} ,\hspace{0.2cm}  $$ 
and $n$ the order of the graphs $\G(\Ub)$,
if and only if $\x = \x({C})$ for some maximum common clique $C \in \CC_{\UU}$. 
\end{theorem}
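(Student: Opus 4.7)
The plan is to reduce the full claim to the already-settled restricted case over $\Delta^0_\varepsilon$. By Proposition~\ref{maximum}, the global minimizers of $G$ on $\Delta^0_\varepsilon$ are exactly the pairs $(\x(C),\y_C)$ with $C$ a maximum common clique and $\y_C\in\{0,1\}^n$ its indicator, with common optimal value $G^*:=-1+\tfrac{1}{2k^*}-\tfrac{k^*}{2\gamma}$, where $k^*$ denotes the maximum common clique size. It therefore suffices to prove that, under the stated bound on $\beta$, every feasible $(\x,\y)$ of problem~\eqref{original-v2-res} whose support $S:=\supp(\x)=\supp(\y)$ is \emph{not} a common clique satisfies $G(\x,\y)>G^*$.

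The first step I would take is to rewrite $g_\Ub$ in a more tractable form. Using $\x\T\overline\Eb\x=1-\che\x\T\x$ and $\|\y\|^2=|S|$ (the latter because $\y\in\{0,1\}^n$), one obtains $g_\Ub(\x,\y)=-(1+\beta)\x\T\Ub\x+\beta(1-\che\x\T\x)-\tfrac{|S|}{2\gamma}$, so that $G(\x,\y)=-(1+\beta)\min_{\Ub\in\UU}\x\T\Ub\x+\beta(1-\che\x\T\x)-\tfrac{|S|}{2\gamma}$. I would then split on whether $S\in\CC_\UU$. If yes, then $(\x,\y)\in\Delta^V_\varepsilon(S)\subseteq\Delta^0_\varepsilon$, and Proposition~\ref{prop1}(a) combined with Proposition~\ref{maximum} yields $G(\x,\y)\ge G^*$ with equality iff $\x=\x(C)$ for a maximum common clique $C$; this branch uses no bound on $\beta$.

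The core of the argument is the case $S\notin\CC_\UU$. Here there exist $\Ub^*\in\UU$ and a non-edge $\{i,j\}\subseteq S$ of $\G(\Ub^*)$. From $\x\T\Ub^*\x=\che\x\T\x+2\sum_{\{k,\ell\}\in E_{\Ub^*},\,\{k,\ell\}\subseteq S}x_kx_\ell$ and $y_i=y_j=1$ forcing $x_i,x_j\ge\varepsilon$, comparison with the ``fully connected'' value $1-\che\x\T\x$ gives $\x\T\Ub^*\x\le 1-\che\x\T\x-2\varepsilon^2$, and hence $\min_{\Ub\in\UU}\x\T\Ub\x\le 1-\che\x\T\x-2\varepsilon^2$. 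Plugging this into the formula for $G$ and simplifying yields the key lower bound
$$G(\x,\y)\ge -1+\che\x\T\x+2(1+\beta)\varepsilon^2-\tfrac{|S|}{2\gamma}.$$

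The main obstacle --- really the only technical one --- is to verify strict positivity of $G(\x,\y)-G^*$ under the given bound on $\beta$. Using $\che\x\T\x\ge\tfrac{1}{2|S|}$ (convexity on the simplex), the gap is bounded below by $\tfrac{1}{2|S|}-\tfrac{1}{2k^*}+2(1+\beta)\varepsilon^2+\tfrac{k^*-|S|}{2\gamma}$, which is nonnegative whenever $|S|\le k^*$. In the delicate range $|S|>k^*$, I would perform a worst-case analysis on the pair $(|S|,k^*)$: when $|S|\le n-1$ one uses $\tfrac{1}{2|S|}\ge\tfrac{1}{2(n-1)}$ together with $k^*\ge 1$, while when $|S|=n$ the fact that $V\notin\CC_\UU$ forces $k^*\le n-1$, hence $\tfrac{1}{2k^*}\ge\tfrac{1}{2(n-1)}$. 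Bounding $|S|-k^*\le n-1$ and $|S|\le n$ in the $\tfrac{1}{\gamma}$ term, both subcases collapse to the scalar sufficient condition $2(1+\beta)\varepsilon^2>1-\tfrac{1}{2(n-1)}+\tfrac{n}{2\gamma}$, which rearranges exactly to the stated hypothesis. This forces $G(\x,\y)>G^*$ for every $S\notin\CC_\UU$, so any global minimizer must lie in $\Delta^0_\varepsilon$, and Proposition~\ref{maximum} then identifies it as a characteristic vector of a maximum common clique.
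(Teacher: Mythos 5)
Your proof is correct and lands on exactly the stated threshold, but it takes a somewhat different route from the paper in the decisive step. The shared core is identical: a missing edge $\{i,j\}$ inside $S=\supp(\x)$ costs at least $2\beta x_ix_j\ge 2\beta\varepsilon^2$, and the hypothesis on $\beta$ makes this dominate; the reduction to $\Delta^0_\varepsilon$ plus Proposition~\ref{maximum} then finishes. The difference is in how the dominance is certified. The paper avoids your casework entirely with a sign argument: it bounds $\x\T\tilde\Ub\x\le \max_{\x\in\Delta^V}\x\T\tilde\Ub\x = 1-\frac{1}{2\omega(\tilde\G)}$ via the regularized Motzkin--Straus value, uses $\omega(\tilde\G)\le n-1$ (the graph misses $\{h,k\}$, so it cannot be complete), and concludes $G(\x,\y)>0$ at every point whose support is not a common clique; separately, Lemma~\ref{lemma1} gives $G(\x_0,\y_0)=\che\|\x_0\|^2-1-\frac{1}{2\gamma}\|\y_0\|^2<0$ at any point of $\Delta^0_\varepsilon$ (a singleton support always supplies one), and the contradiction $G(\x_0,\y_0)<0<G(\x,\y)$ closes the proof without ever computing the optimal value. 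You instead compare directly against $G^*=\frac{1}{2k^*}-1-\frac{k^*}{2\gamma}$, replacing the Motzkin--Straus bound by the elementary complete-graph comparison $\x\T\Ub^*\x\le 1-\che\x\T\x-2\varepsilon^2$ and retaining $\che\x\T\x\ge\frac{1}{2|S|}$, at the price of the $|S|$-versus-$k^*$ and $|S|\le n-1$ versus $|S|=n$ subcases. Both routes yield the same $\beta$; yours gives a slightly sharper quantitative gap estimate, the paper's zero-threshold trick is shorter and sidesteps all size comparisons.

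One small blemish, not a gap: in your $|S|=n$ subcase, the inequality $\frac{1}{2k^*}\ge\frac{1}{2(n-1)}$ points in the wrong direction for what you need. The correct closing of that subcase uses the upper bound $\frac{1}{2k^*}\le\frac12$ (from $k^*\ge 1$) and observes that the loss from replacing $\frac{1}{2|S|}=\frac{1}{2n}$ by $\frac{1}{2(n-1)}$, namely $\frac{1}{2(n-1)}-\frac{1}{2n}=\frac{1}{2n(n-1)}\le\frac12$, is absorbed by the slack $1-\frac12$ in the bound $\frac{1}{2k^*}\le\frac12\le 1$. With that rewording the subcase collapses to your scalar condition $2(1+\beta)\varepsilon^2>1-\frac{1}{2(n-1)}+\frac{n}{2\gamma}$ exactly as claimed.
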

\begin{proof}
 We will show that if, for any given  $\varepsilon \in \left(0, \frac{1}{n}\right]$,
    $(\x, \y)$ is a global minimizer of \eqref{original-v2-res}  then $(\x, \y)$ is in $\Delta^0_{\varepsilon}$. We hence assume, by contradiction, that $(\x, \y) \in \Delta^V_{\varepsilon}$ is a global minimizer for \eqref{original-v2-res}, but $(\x, \y) \notin \Delta^0_{  \varepsilon}$. In this case, $C := \supp(\x) = \supp(\y)$ is not a common clique. In particular, there exists $\tilde{\Ub} \in \UU$ such that $C$ is not a clique in $\G(\tilde{\Ub})$, hence $[\tilde \Ub]_{hk}=0$ for some $\lk h,k\rk \in \binom{C}{2}$. We have
    \begin{align*}
    G(\x, \y) & \ge g_{\tilde{\Ub}}(\x,\y) =
          \beta \x\T  (\overline\Eb -\tilde{\Ub} ) \x - \x\T \tilde{\Ub} \x - \frac{1}{2 \gamma} \y\T \y\\
        & = - \x\T  \tilde{\Ub} \x+ \beta \sum_{\{i, j \}\in \binom {C}{2}} (1 - [\tilde{\Ub}]_{i j})x_i x_j -  \frac{1}{2 \gamma} \y\T \y \\
        & \geq   - \x\T  \tilde{\Ub} \x+ 2  \beta (1 - [\tilde{\Ub}]_{h k})x_h x_k -  \frac{1}{2 \gamma} \y\T \y  = - \x\T  \tilde{\Ub} \x+ 2 \beta x_h x_k  -  \frac{1}{2 \gamma} \y\T \y \\
        & \geq - \max_{\x \in \Delta} \x\T \tilde{\Ub}\x + 2 \beta x_h x_k -  \frac{1}{2 \gamma} \y\T \y \\
        & =  \frac{1}{2 \omega(\tilde{\G})} -1 + 2 \beta x_h x_k -  \frac{1}{2 \gamma} \y\T \y \\
        & \geq \frac{1}{ 2 (n-1)} - 1 + 2 \beta x_h x_k  -  \frac{1}{2 \gamma} \y\T \y \\
        & \geq \frac{1}{ 2 (n-1)} - 1 +  2 \beta \varepsilon^2 y_h y_k -  \frac{1}{2 \gamma} |C| \,,
\end{align*}
where $\omega(\tilde{\G})\le n-1$ is the  clique number of $\G(\tilde{\Ub})$, which cannot be complete as the edge $\lk h,k\rk$ is missing for sure. In the last row, we used the fact that $(\x, \y) \in \Delta^V_{ \varepsilon}$, then $x_i \geq  \varepsilon y_i$ for all $i\in V$ and $\y\T \y = |C|$. In particular,  $y_h = y_k = 1$, hence $$G(\x, \y) > \frac{1}{ 2 (n-1)} - 1 +  2 \beta  \varepsilon^2 -  \frac{1}{2 \gamma} |C| \,,$$ and since $$\beta > \biggl(1 - \frac{1}{2(n-1)} + \frac{1}{2\gamma}n \biggl) \frac{1}{2  \varepsilon^2} \geq \biggl(1 - \frac{1}{2(n-1)} + \frac{1}{2\gamma}|C| \biggl) \frac{1}{2  \varepsilon^2} \,,$$  
we obtain $G(\x, \y) > 0$.
Let $(\x_0, \y_0) \in \Delta^0_{ \varepsilon}$; such a point always exists (indeed in the worst scenario $\supp(\x_0) = \supp(\y_0)$ is a  singleton, which is always a common clique). Then for all $\Ub \in \UU$  we have from Lemma~\eqref{lemma1} and the fact that $(\x_0, \y_0)\in \Delta^V_{  \varepsilon}$ implies $\x_0\T\x_0 \le 1$,
\begin{align*}
G(\x_0, \y_0) &= g_\Ub(\x_0, \y_0) =  - \x_0\T \Ub\x_0 - \frac{1}{2 \gamma} \y_0\T \y_0 \\
&= \frac{1}{2} \x_0\T \x_0 - 1 - \frac{1}{2 \gamma} \norm{\y_0}^2< 0 \text{ for all } \Ub \in \UU\,.
\end{align*}
We thus arrive at the  absurd relation $G(\x_0, \y_0) < 0 < G(\x, \y)$, contradicting global optimality  of $(\x, \y)$ for the problem \eqref{original-v2-res}. Hence the result.
\end{proof}

We now define the concept of  stable global minimum, which will be useful when analyzing the theoretical properties of  our problem \eqref{original-v2}. Note that stability in this sense is not related to the notion of stable sets in graph theory, but rather has connotation with parametric (continuous) optimization.

\begin{definition}\label{def:stablegm}
A point $(\x,\y)$ is a \emph{stable global minimizer} of problem \eqref{original-v2} if for any fixed $\varepsilon \in \left(0, \frac{1}{n}\right]$ there exists a $\bar \delta>0$ such that $\varepsilon_{\bar \delta}=\varepsilon-\bar \delta>0$
and $(\x,\y)$ is a global minimum of problem \eqref{original-v2} for all pairs $(\varepsilon_{\delta}, \beta_{\varepsilon_{\delta}})$, with   $\beta_{\varepsilon_{\delta}}$ suitably chosen penalty parameter, $\gamma>0$ and $\delta\in[0, \bar\delta]$.
  
\end{definition}
We thus can finally state an equivalence result between the stable global minima of the continuous problem \eqref{original-v2} and the maximum common cliques.

\begin{theorem}\label{equivstable}
 For any given  $\varepsilon \in \left(0, \frac{1}{n}\right]$ and $\gamma > 0$, 
   a point $(\x, \y)$  is a stable global minimizer of \eqref{original-v2} with
    $$\beta_{\varepsilon_{\delta}} > \biggl ( 1 - \frac{1}{2 (n-1)} + \frac{1}{2 \gamma} n \biggl) \frac{1}{2\varepsilon_{\delta}^2} ,\hspace{0.2cm}  $$ 
     and $n$ the order of the graphs $\G(\Ub)$,
if and only if $\x = \x({C})$ for some maximum common clique $C \in \CC_{\UU}$ and $\y \in\{0,1\}^n$. 
 
\end{theorem}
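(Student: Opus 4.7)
The plan is to prove the two implications separately: the forward direction is driven entirely by the stability hypothesis, while the reverse one extends the exact-penalty argument of Theorem~\ref{Th:intequiv} from the restricted problem~\eqref{original-v2-res} to the relaxed problem~\eqref{original-v2}. Since $G(\x,\y)$ is separable with the $\y$-block strictly concave through $-\frac{1}{2\gamma}\|\y\|^2$, for any fixed $\x\in\Delta^V$ any $\y$-minimizer equals the pointwise upper envelope $y_i=\min\{1,x_i/\varepsilon\}$ for $i\in\supp(\x)$ and $y_i=0$ elsewhere. In particular, a fractional component $y_j\in(0,1)$ can occur only when $x_j\in(0,\varepsilon)$, and then $y_j=x_j/\varepsilon$. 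The crux of the forward direction is that stability forbids such fractional components: if $(\x,\y)$ is a stable global minimizer with some $y_j\in(0,1)$, then for every $\delta>0$ small enough the envelope value at the perturbed parameter satisfies $\min\{1,x_j/\varepsilon_\delta\}>y_j$, so keeping $\x$ fixed and replacing $\y$ by this strictly larger $\y'$ yields a feasible point at $\varepsilon_\delta$ with strictly smaller $G$, contradicting the stable minimality at $\varepsilon_\delta$. Hence $\y\in\{0,1\}^n$, so $(\x,\y)$ is feasible for~\eqref{original-v2-res}; and since the relaxed minimum cannot exceed the restricted one while $(\x,\y)$ achieves both, it is also a global minimum of~\eqref{original-v2-res}, and Theorem~\ref{Th:intequiv} forces $\x=\x(C)$ for some maximum common clique $C\in\CC_\UU$.

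For the reverse direction, I would take $\x=\x(C)$ with $C\in\CC_\UU$ a maximum common clique and $\y\in\{0,1\}^n$ with $\supp(\y)=C$, and establish global optimality of $(\x,\y)$ in~\eqref{original-v2} at every pair $(\varepsilon_\delta,\beta_{\varepsilon_\delta})$ in a neighborhood. Feasibility is immediate because $x_i=1/|C|\geq 1/n\geq\varepsilon_\delta$. For global optimality, I would pick an arbitrary feasible $(\x',\y')$ and split on whether $\supp(\x')$ is a common clique across $\UU$. In the affirmative case, Lemma~\ref{lemma1} gives $G(\x',\y')=\frac{1}{2}\|\x'\|^2-1-\frac{1}{2\gamma}\|\y'\|^2$; combining the Cauchy–Schwarz bound $\|\x'\|^2\geq 1/|\supp(\x')|$ with $\|\y'\|^2\leq|\supp(\x')|\leq|C|$ and the monotonicity of $k\mapsto\frac{1}{2k}-\frac{k}{2\gamma}$ in $k$ yields $G(\x',\y')\geq G(\x,\y)$, where the last step uses the fact that $C$ is a \emph{maximum} common clique. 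Otherwise, the exact-penalty argument of Theorem~\ref{Th:intequiv} applies almost verbatim: a missing edge $\{h,k\}\subseteq\supp(\x')$ in some $\tilde\Ub\in\UU$, combined with $\|\y'\|^2\leq n$ and the prescribed lower bound on $\beta_{\varepsilon_\delta}$, forces $G(\x',\y')>0>G(\x,\y)$.

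The main obstacle will be the last subcase when the missing edge $\{h,k\}$ has $x'_h$ or $x'_k$ strictly below $\varepsilon_\delta$, so that the lower bound $x'_h x'_k\geq\varepsilon_\delta^2$ exploited in Theorem~\ref{Th:intequiv} is no longer available. I expect to handle this through the structural reduction that if for every missing edge within $\supp(\x')$ at least one endpoint has coordinate below $\varepsilon_\delta$, then the set $S=\{i\in\supp(\x'):x'_i\geq\varepsilon_\delta\}$ contains no missing edge and is therefore a common clique itself, which reduces the situation to the first subcase via a comparison of $G(\x',\y')$ with the value $G(\x(S),\overline{\y}_S)$, where $\overline{\y}_S$ is the integer indicator of $S$. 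Closing this reduction cleanly, while controlling the small-coordinate contributions to $\|\x'\|^2$ and $\|\y'\|^2$, is where the analysis will require the most care, and Proposition~\ref{maximum} will then deliver the final comparison with $G(\x(C),\y)$.
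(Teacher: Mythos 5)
Your forward direction is sound and, modulo packaging, is exactly the paper's argument: the paper also uses stability to kill fractional components of $\y$ (if $y_i\in(0,1)$, then at the perturbed parameter $\varepsilon_\delta<\varepsilon$ the constraint $x_i\ge\varepsilon_\delta y_i$ becomes slack, so $\d=(\oo,\e_i)$ is a feasible, strictly improving direction, contradicting global optimality at $(\varepsilon_\delta,\beta_{\varepsilon_\delta})$), and then handles the support. Your envelope formulation $y_i=\min\{1,x_i/\varepsilon\}$ is a clean way to say the same thing, and your subsequent shortcut is actually tidier than the paper's: once $\y\in\{0,1\}^n$, you observe that a relaxed global minimizer which is feasible for \eqref{original-v2-res} attains the restricted minimum and invoke Theorem~\ref{Th:intequiv} at $\delta=0$, whereas the paper re-runs the penalty computation of Theorem~\ref{Th:intequiv} verbatim. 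Both are valid; yours avoids duplication.

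The reverse direction is where your proposal has a genuine, and indeed self-acknowledged, gap. You set out to verify global optimality of $(\x(C),\y)$ in \eqref{original-v2} against \emph{every} feasible competitor $(\x',\y')$, including those with fractional $\y'$ and coordinates $x'_i<\varepsilon_\delta$, and your claim that ``the exact-penalty argument of Theorem~\ref{Th:intequiv} applies almost verbatim'' is false precisely in that regime: the bound $x'_hx'_k\ge\varepsilon_\delta^2 y'_hy'_k$ with $y'_h=y'_k=1$ is what powers Theorem~\ref{Th:intequiv}, and it is unavailable here. Your proposed repair via $S=\{i\in\supp(\x'): x'_i\ge\varepsilon_\delta\}$ correctly identifies that $S$ is a common clique in the bad subcase, but the comparison $G(\x',\y')\ge G(\x(S),\overline{\y}_S)$ is not a routine consequence of your stated bounds: the mass on the small coordinates can push $\norm{\x'}^2$ strictly below $1/|S|$, and the fractional $y'_i$ outside $S$ strictly enlarge $\norm{\y'}^2$ beyond $|S|$, so one must show that the $\beta$-penalty generated by the missing edges incident to each small coordinate dominates \emph{both} losses --- a per-coordinate bookkeeping (with $\beta\sim\varepsilon_\delta^{-2}$ against terms linear and quadratic in the small $y'_i$) that your sketch does not carry out, and which also needs care when all endpoints of a missing edge are small. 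It is worth noting that the paper never performs this competitor-by-competitor comparison: its proof runs the contradiction only \emph{at the candidate stable minimizer}, where stability first forces $\y$ integral (hence $y_h=y_k=1$, restoring the needed bound), and it settles the converse by appeal to Proposition~\ref{maximum}, i.e.\ optimality within $\Delta^0_{\varepsilon}$. So your plan for the converse is strictly more ambitious than the paper's route, and it is left open exactly at its crux; as it stands, the reverse implication is not proved.
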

\begin{proof}
 For any given  $\varepsilon \in \left(0, \frac{1}{n}\right]$, we need to show that there exists a $\bar \delta>0$ such that $\epsilon_{\bar \delta}>0$ and $(\x,\y) \in \Delta^0_{    \varepsilon_{\delta}}$ for all choices of $\delta \in [0,\bar \delta].$ The proof will then follow from Proposition~\eqref{maximum}. Suppose by way of contradiction that $(\x, \y) \notin  \Delta^0_{\varepsilon_{\delta}} $ for all  $\delta<\bar\delta$ sufficiently small. So that $C:=\supp(\x)=\supp(\y)$ is not a common clique. Before we establish the  claim on $C$, we show integrality of $\y$, again by contradiction, so suppose some $y_i$ are fractional (then $i\in C$). For this $\y$ we consider the following scenario:
  if for some  sufficiently small $\delta$, we have $x_i> \eps_{\delta} y_i$  with fractional $y_i$, then $\d=(\oo,\e_i)$ is a feasible direction at $(\x,\y)$ which is strictly improving. Indeed,
 let $\Ub_t  \in \UU(\x, \y + t\d)$, i.e.,  $g_{\Ub_t}(\x, \y + t\d) = G(\x, \y + t\d)$.
We have 
\begin{align*}
    G(\x, \y) & \geq  g_{\Ub_t} (\x,\y)  = -\x\T \Ub_t\x + \beta \x\T   (\overline\Eb -\Ub_t) \x - \frac{1}{2 \gamma} \y\T \y \\
    & > -\x\T \Ub_t\x + \beta \x\T   (\overline\Eb -\Ub_t) \x - \frac{1}{2 \gamma} \y\T \y -  \frac{1}{2 \gamma} ( t^2  + 2 t y_i)  \\
    & = -\x\T \Ub_t\x + \beta \x\T   (\overline\Eb -\Ub_t) \x - \frac{1}{2 \gamma} (\y\T \y + t^2  + 2 t y_i) \\
    & = -\x\T \Ub_t\x + \beta \x\T   (\overline\Eb -\Ub_t) \x - \frac{1}{2 \gamma} (\y + t \d)\T(\y + t \d) \\
  &  = g_{\Ub_t}(\x, \y + t\d) = G(\x, \y + t\d)\,.
\end{align*}
Since $(\x, \y)$ is a stable global  minimizer of~\eqref{original-v2}, we obtain a contradiction. 

Now that we have proved $\y \in\{0,1\}^n$, we turn to the claim that $C$ is a common clique, and again argue by contradiction: if this were not true, then
 there exists $\tilde{\Ub} \in \UU$ such that $C := \supp(\x)$ is not a clique in $\G(\tilde{\Ub})$, hence $[\tilde \Ub]_{hk}=0$ for some $\lk h,k\rk \in \binom{C}{2}$. The proof is a verbatim repetition of the proof given in Theorem \ref{Th:intequiv}, where we consider again a $\delta<\bar\delta$ sufficiently small. Hence the result.
\end{proof}

\section{A projection-free algorithm for the continuous min-max reformulation}
\label{section:alg}
In this section, we present a first-order, projection-free algorithm specifically designed for our problem. In particular, we develop a new subdifferential notion based on the classical Clarke and Goldstein subdifferentials discussed in \cite{bagirov2014introduction} and study its theoretical properties. 
Afterwards, we implement a Frank-Wolfe algorithm that takes advantage of this new subdifferential, proving sublinear convergence rates for it. To the best of our knowledge, this is the first attempt in the literature to prove convergence rates for the non-convex case. In the convex (or concave max-min) case, a Frank-Wolfe method was introduced and analyzed in~\cite{White}, but without giving any convergence rate.

\subsection{Preliminaries}
\label{subsec:preliminaries}
Before introducing the algorithm, we clarify how first-order information of the nonsmooth function 
$G$ is exploited. Since $ G $ is the pointwise maximum of a finite set of locally Lipschitz continuous functions $ g_\Ub $, it is itself Lipschitz continuous in the domain $\Delta^V_{\varepsilon}$, with Lipschitz constant $L$ equal to:
\begin{equation}
\label{eq:L}
L := \max_{\substack{\Ub \in \UU, \\ (\x,\y) \in \Delta^V_{\varepsilon}}} \left\| \left( 2 \left( - (1+\beta) \Ub + \beta \overline \Eb \right) \x,\; -\frac{1}{\gamma} \y \right) \right\| > 0 \,.    
\end{equation}
This property allows us to apply the generalized subdifferential framework for Lipschitz functions introduced in \cite{bagirov2014introduction}. In the present context, the constant $L$ dominates the Lipschitz constants of the gradients related to the quadratic functions $g_\Ub$ defining $G$. As a result, even though $L$ is defined as a function Lipschitz constant, it can be safely substituted for the gradient Lipschitz constant of the functions $g_\Ub$ throughout the analysis.

To understand the subdifferential notion adopted in the algorithm, it is necessary to clarify which set of matrices is supposed to be available at each iteration. 
In particular, knowing the set of active matrices would allow us to calculate the Clarke subdifferential, a standard tool in convex analysis. 

\begin{definition}[Clarke subdifferential]
The \emph{Clarke subdifferential} of $ G $ at a point $ (\x,\y) $, denoted by $ \partial G(\x,\y) $, can be expressed as:
$$
\partial G(\x,\y) := \conv \lk \nabla g_\Ub(\x,\y) : \Ub\in \UU(\x,\y) \rk.
$$
\end{definition}

This expression follows directly from the standard characterization of the Clarke subdifferential of a pointwise maximum of finitely many continuously differentiable functions (see, e.g.,~\cite[Theorem 3.23 and Corollary 3.5]{bagirov2014introduction}). In fact, $
\partial G(\x,\y)$ is a (nonempty) polytope, because $\UU(\x,\y)$ is finite.

However, an algorithm that relies only on the Clarke subdifferential to solve problem ~\eqref{original-v2} cannot provide meaningful theoretical convergence results due to the non-differentiability of the function 
$G$. Therefore, we assume that for any point $ (\x,\y) $, the algorithm has access to the set $ \UU^\delta(\x,\y) $ approximating the active set with a tolerance $ \delta $, in a sense to be defined below, rather than the exact active set $ \UU(\x,\y) $. This assumption reflects realistic scenarios where exact function evaluations or full knowledge of the underlying structure are unavailable. From a computational perspective, this relaxation is also justified because the exact maximizers of $ G(\x,\y) $ may not be numerically distinguishable when multiple $g_\Ub$ yield similar values. 

\begin{definition} [Approximately active matrices]
Let $\delta > 0$ be a small parameter and $(\x,\y)\in \Delta^V_\epsilon$.\\
Recall the (nonempty and finite) set of active matrices at a point $\UU(\x,\y)$ defined in~\eqref{actmat}.
Now define
the set of approximately active matrices (again nonempty and finite) at the same point as
$$
\UU^\delta(\x,\y) := \lk \Ub \in \UU : g_\Ub(\x,\y) \geq G(\x,\y) - \delta \rk\, .
$$
\end{definition}

Approximating the active set with a tolerance $ \delta $ therefore provides a stable and implementable mechanism; to this end, we will now construct a new approximate  set of active matrices $\UU_\eta(\x,\y) \subseteq \UU^\delta(\x,\y)$ which will contain only  matrices in $\UU^\delta(\x,\y)$ that become active within a certain neighborhood of $(\x,\y)$. 

To be precise, the process to obtain $\UU_\eta(\x,\y)$ is the following. Given a point $ (\x,\y) $, use as neighborhood the Euclidean ball $ B_\eta(\x,\y) $ of radius $\eta \geq 0$ around $(\x,\y)$ and take $ \eta \leq \frac{\delta}{2L} $, then the only functions that may become active in $ B_\eta(\x,\y) $ are those in $ \UU^\delta(\x,\y) $ (the result is straightforward due to the Lipschitz continuity of the functions $g_\Ub$ and $G$). Finding exactly which $ \delta $-active functions become active for at least  one point $ (\ss,\t) $ within $ B_\eta(\x,\y) $ would require solving a non-linear optimization problem, with a high computational cost. For this reason, as proposed in \cite{Goldstein1977}, we decide to approximate each function $ g_\Ub $ using its first-order Taylor expansion around $ (\x,\y) $, which produces a reliable estimate as long as $ \eta $ is sufficiently small. 
To this end, let us consider the following system of linear inequalities in $(\ss,\t)$, focussing on a certain fixed $\bar\Ub\in \UU^\delta(\x,\y)$:

\begin{equation}
\label{system:my_B}
\begin{cases}
\begin{aligned}
\langle \nabla g_{\Ub}(\x,\y) - \nabla g_{\bar\Ub}(\x,\y), (\ss,\t) - (\x,\y) \rangle \leq & g_{\bar\Ub}(\x,\y) - g_{\Ub}(\x,\y) + L\eta^2, \\
& \forall \Ub \in \UU^\delta(\x,\y)\, ,
\end{aligned} \\
\norm{(\ss,\t) - (\x,\y) }_\infty \leq \frac{\eta}{\sqrt{n}} \, , \\
(\ss,\t) \in \Delta_\epsilon^V\, .
\end{cases}
\end{equation}

The first constraint tests whether $ g_{\bar \Ub} $ becomes approximately active under the linearized model, while the remaining constraints ensure that the perturbed point $ (\ss,\t) $ remains within $ B_\eta(\x,\y) \cap \Delta_\epsilon^V $. 

\begin{definition}[approximate local active set]
\label{def:approx_active_set}
Given parameters $\delta > 0$ and $\eta \leq \frac{\delta}{2L}$, and a point $(\x,\y)\in\Delta_\epsilon^V$, we define the \emph{approximate local active set} at $(\x,\y)$ as

$ \UU_\eta(\x,\y) := \left\{\bar\Ub \in \UU^\delta(\x,\y)\ :\ \exists\, (\ss,\t)\in
\Delta_\epsilon^V\ \text{satisfying system~\eqref{system:my_B}} \,\right\}. $
\end{definition}

As in~\cite{Goldstein1977}, we infer that  $\UU(\x,\y)\subseteq \UU_\eta(\x,\y) $ for any $\eta\ge 0$: indeed, taking $ (\ss,\t) = (\x,\y) $ in system~\eqref{system:my_B}, it is immediate to see that all matrices active in $ (\x,\y) $ satisfy the system. In particular we have $\UU_\eta(\x,\y) \neq \emptyset$. 

We can finally define the exact subdifferential used in the algorithm. 

\begin{definition}[$\eta$-subdifferential]
\label{def:eta_subdiff}
$$
\partial_\eta G(\x,\y) = \conv \lk \nabla g_{\bar\Ub}(\x,\y) : \bar\Ub \in \UU_\eta(\x,\y) \rk
$$
\end{definition}

In fact, this set is again a (nonempty) polytope as also $ \UU_\eta(\x,\y)$ is finite.
In Proposition~\ref{prop:Goldstein_subdiff_properties} we summarize some relevant properties regarding the set given in Definition~\ref{def:eta_subdiff}, showing that in fact it has the characteristics of a subdifferential. This result establishes the link between the approximate subdifferential and the Clarke subdifferential, and is key for interpreting both the theoretical analysis given in this section and the numerical results given in the next one. 

\begin{proposition}
\label{prop:Goldstein_subdiff_properties}
We list below a set of useful properties of the $\eta$-subdifferential:
\begin{enumerate}
    \item The mapping $\partial_\eta G$ is upper semicontinuous.
    \item Let $\partial G(\x,\y)$ denote the Clarke subdifferential of $G$. Then:
   $$
       \bigcap_{\eta >0} \partial_\eta G(\x,\y) = \partial_0 G(\x,\y) = \partial G(\x,\y)\, .
    $$
\end{enumerate}
\end{proposition}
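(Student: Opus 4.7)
My plan is to prove item~(1) by a standard sequential compactness argument exploiting the finiteness of $\UU$, and item~(2) by separately establishing $\partial_0 G(\x,\y)=\partial G(\x,\y)$ directly from Definition~\ref{def:approx_active_set} specialised to $\eta=0$, together with $\bigcap_{\eta>0}\partial_\eta G(\x,\y)=\partial_0 G(\x,\y)$ through a gap argument that forces $\UU_\eta(\x,\y)$ to coincide with $\UU(\x,\y)$ for all sufficiently small $\eta$.

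For item~(1), I will fix $\eta>0$, take sequences $(\x_k,\y_k)\to(\x,\y)$ and $\g_k\to\g$ with $\g_k\in\partial_\eta G(\x_k,\y_k)$, and write each $\g_k$ as a convex combination $\sum_{\bar\Ub\in\UU_\eta(\x_k,\y_k)}\lambda_k^{\bar\Ub}\,\nabla g_{\bar\Ub}(\x_k,\y_k)$. Since $\UU$ is finite, only finitely many subsets arise as $\UU_\eta(\x_k,\y_k)$, so I can extract a subsequence (relabelled) along which $\UU_\eta(\x_k,\y_k)$ coincides with a fixed $\UU^\star\subseteq\UU$ and $\lambda_k\to\lambda$ in the simplex; continuity of each $\nabla g_{\bar\Ub}$ then identifies $\g=\sum_{\bar\Ub\in\UU^\star}\lambda^{\bar\Ub}\,\nabla g_{\bar\Ub}(\x,\y)$. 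The crux is to verify $\UU^\star\subseteq\UU_\eta(\x,\y)$: for each $\bar\Ub\in\UU^\star$, a witness $(\ss_k,\t_k)\in\Delta^V_\eps$ of system~\eqref{system:my_B} at $(\x_k,\y_k)$ admits, by compactness, a convergent subsequence $(\ss_k,\t_k)\to(\ss,\t)\in\Delta^V_\eps$ with $\|(\ss,\t)-(\x,\y)\|_\infty\le\eta/\sqrt n$. Continuity of $G$ and the $g_\Ub$ will give $\bar\Ub\in\UU^\delta(\x,\y)$, and I then pass to the limit in the first linear inequality of~\eqref{system:my_B}, concluding that $\bar\Ub\in\UU_\eta(\x,\y)$.

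For item~(2), I would begin by noting that at $\eta=0$ the constraint $\|(\ss,\t)-(\x,\y)\|_\infty\le 0$ forces $(\ss,\t)=(\x,\y)$, so the first inequality in~\eqref{system:my_B} reduces to $g_{\bar\Ub}(\x,\y)\ge g_\Ub(\x,\y)$ for every $\Ub\in\UU^\delta(\x,\y)\supseteq\UU(\x,\y)$; since $\max_{\Ub\in\UU}g_\Ub(\x,\y)=G(\x,\y)$ is attained exactly on $\UU(\x,\y)$, this forces $\bar\Ub\in\UU(\x,\y)$, establishing $\UU_0(\x,\y)=\UU(\x,\y)$ and hence $\partial_0 G(\x,\y)=\partial G(\x,\y)$. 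One inclusion of the intersection identity, $\partial_0 G(\x,\y)\subseteq\bigcap_{\eta>0}\partial_\eta G(\x,\y)$, comes from taking $(\ss,\t)=(\x,\y)$ in~\eqref{system:my_B}, which verifies $\UU(\x,\y)\subseteq\UU_\eta(\x,\y)$ for every $\eta\ge 0$. For the reverse inclusion, I introduce the gap $\delta^\star:=G(\x,\y)-\max\{g_\Ub(\x,\y):\Ub\in\UU\setminus\UU(\x,\y)\}>0$ (with the convention $\delta^\star=+\infty$ when $\UU(\x,\y)=\UU$); for any $\delta<\delta^\star$ one has $\UU^\delta(\x,\y)=\UU(\x,\y)$, so thanks to the compatibility $\eta\le\delta/(2L)$, for every $\eta<\delta^\star/(2L)$ the chain $\UU(\x,\y)\subseteq\UU_\eta(\x,\y)\subseteq\UU^\delta(\x,\y)=\UU(\x,\y)$ forces equality, whence $\partial_\eta G(\x,\y)=\partial_0 G(\x,\y)$ and the claim follows upon intersecting over $\eta>0$.

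The main obstacle will be the careful limit passage in item~(1): specifically, justifying that the limiting witness $(\ss,\t)$ satisfies the linearised inequality for every $\Ub\in\UU^\delta(\x,\y)$, even though such $\Ub$ need not lie in $\UU^\delta(\x_k,\y_k)$ for every $k$. The key tools here will be uniform Lipschitz continuity of $\nabla g_\Ub$, $g_\Ub$ and $G$ on a compact neighbourhood of $(\x,\y)$, together with the slack $L\eta^2$ built into system~\eqref{system:my_B}, which together allow one to transfer the inequalities from $(\x_k,\y_k)$ to $(\x,\y)$ at the cost of a vanishing error.
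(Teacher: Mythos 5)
Your proposal follows the paper's proof essentially step for step: item~(1) is proved there by exactly your compactness argument --- extracting a convergent subsequence of witnesses $(\ss_k,\t_k)\in B_\eta(\x_k,\y_k)\cap\Delta_\epsilon^V$ and passing to the limit in system~\eqref{system:my_B} by continuity of the $g_\Ub$ and their gradients --- and item~(2) by the same two observations, namely that $\eta=0$ forces the witness to equal $(\x,\y)$ so that $\UU_0(\x,\y)=\UU(\x,\y)$, and that the strictly positive activity gap $\bar\delta$ of the inactive matrices yields $\UU_\eta(\x,\y)=\UU(\x,\y)$ for all $\eta\le\bar\delta/(2L)$, collapsing the intersection. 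The limit passage you single out as the main obstacle (matrices in $\UU^\delta(\x,\y)$ that need not be $\delta$-active along the sequence) is handled in the paper with the same continuity gloss you propose, so your write-up is, if anything, slightly more explicit than the published argument (constant active-subset subsequence via finiteness of $\UU$, converging simplex weights).
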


\begin{proof}
\begin{enumerate}
\item Let $(\x_k, \y_k) \to (\x, \y)$ and let $\h_k \in \partial_\eta G(\x_k, \y_k)$ be such that $\h_k \to \h$. We must show that $\h \in \partial_\eta G(\x, \y)$.
The relation $\bar\Ub \in \UU_\eta(\x_k,\y_k)$ means that there exists $(\ss_k,\t_k) \in B_\eta(\x_k,\y_k)\cap\Delta_\epsilon^V$ satisfying system~\eqref{system:my_B} at $(\x_k,\y_k)$. Because $\Delta_\epsilon^V$ is compact and $\eta$ is fixed, the sequence $(\ss_k,\t_k)$ admits a convergent subsequence with limit $(\ss,\t)\in B_\eta(\x,\y)\cap\Delta_\epsilon^V$. By continuity of all $g_\Ub$ and their gradients, the inequalities in~\eqref{system:my_B} remain valid in the limit for all $\Ub\in\UU^\delta(\x,\y)$, which implies $\bar\Ub \in \UU_\eta(\x,\y)$. 

Finally, since $\nabla g_{\bar\Ub}(\x_k,\y_k)\to\nabla g_{\bar\Ub}(\x,\y)$ for all $\bar\Ub\in \UU$, and convex combinations are preserved under limits, we obtain $\h \in \conv\{\nabla g_{\bar\Ub}(\x,\y):\bar\Ub\in\UU_\eta(\x,\y)\} = \partial_\eta G(\x,\y)$. Therefore, $\partial_\eta G$ is upper semicontinuous.

\item 

First, it is easy to see that $\partial_0 G(\x,\y) = \partial G(\x,\y)$. 
Indeed, imposing $\eta = 0$ in system~\eqref{system:my_B} forces $(\ss,\t) = (\x,\y)$ by the second inequality. 
Consequently, the first inequality becomes
\[
0 \leq g_{\bar\Ub}(\x,\y) - g_{\Ub}(\x,\y) 
\quad \forall\ \Ub \in \UU^\delta(\x,\y).
\]
The only matrices $\bar\Ub \in \UU$ satisfying this condition are those that are active at $(\x,\y)$, 
which shows that $\UU_0(\x,\y) = \UU(\x,\y)$ and hence $\partial_0 G(\x,\y) = \partial G(\x,\y)$.

We now show that 
\[
\bigcap_{\eta > 0} \partial_\eta G(\x,\y) = \partial_0 G(\x,\y).
\]
It is sufficient to prove that there exists $\bar{\eta} > 0$ such that 
$\partial_\eta G(\x,\y) = \partial_0 G(\x,\y)$ for all $\eta \leq \bar{\eta}$.
Let $\bar{\delta} > 0$ be such that $\UU^{\bar{\delta}}(\x,\y) = \UU(\x,\y)$. 
Such a value exists since each $\Ub$ is either active or strictly inactive, 
in which case $g_\Ub(\x,\y) < G(\x,\y)$, and the difference 
$G(\x,\y) - g_\Ub(\x,\y)$ is strictly positive. 
Setting $\bar{\eta} = \frac{\bar{\delta}}{2L}$, 
the Lipschitz continuity of the functions $g_\Ub$ implies that, 
for all $\eta \leq \bar{\eta}$, the only functions that can become active 
within $B_\eta(\x,\y)$ are those already active at $(\x,\y)$. 
Hence, $\UU_\eta(\x,\y) = \UU_0(\x,\y)$ for all $\eta \leq \bar{\eta}$, 
and therefore $\partial_\eta G(\x,\y) = \partial_0 G(\x,\y)$.
\end{enumerate}
\end{proof}

\subsection{Algorithmic scheme}
\label{subsec:alg_scheme}
Having introduced the theoretical tools required to handle the non-smoothness of $ G $, we now turn to the design of the optimization algorithm. We propose a projection-free first-order method that extends the classical Frank–Wolfe   algorithm to the nonsmooth and adversarial setting by incorporating the sub\-dif\-ferential $ \partial_\eta G$. 
The proposed algorithm is proved to converge to first-order stationary points of $ G $, and in our experiments it empirically converges to maximal common cliques in the underlying backbone networks.

Given that our feasible domain is a polytope, a projection-free optimization strategy, such as the Frank-Wolfe method, is especially suited to this setting \cite{frank1956algorithm}. One of the main advantages of the Frank-Wolfe method is that it avoids costly projections: instead of computing a projection onto the feasible set at each iteration, it indeed solves a linear minimization problem over the same set. This feature makes the method particularly attractive in high-dimensional scenarios. Additionally, the Frank-Wolfe method tends to produce sparse iterates, as each point is a convex combination of a small number of extreme points of the feasible region. The main steps of the algorithm are reported in Algorithm~\ref{Alg. FW}. In our context, we assume that the seeker, at each iteration, can query a \emph{first-order adversary oracle} with a feasible point. The oracle then returns either the matrices or the function gradients that  belong to the approximate local active set at that point, and these are then used to construct the corresponding $\eta$-subdifferential employed by the algorithm.

\begin{algorithm}[ht]
\caption{Frank-Wolfe for the adversarial min-max problem}
\label{Alg. FW}
\begin{algorithmic}[1]
     \State Set parameters $\varepsilon, \, k_{\max}, \, \delta, \, \eta, \, \xi$ and choose an initial point $ (\x_0,\y_0) \in \Delta^V_{\varepsilon} $.
    \For{$k = 0, \dots, k_{\max}$}
        \State Compute $ \partial_\eta G(\x_k,\y_k) $ using the \emph{ first-order adversary  oracle}
        \State Compute $ ((\hat{\x}_k,\hat{\y}_k), \hat{\h}_k) \in \text{LMO}_{\Delta^V_{\varepsilon}}(\partial_\eta G(\x_k,\y_k)) $
        \State Set $ \d_k = (\hat{\x}_k,\hat{\y}_k) - (\x_k,\y_k) $
        \If{$ \hat{\h}_k^\top \d_k \geq -\xi $}  \textbf{STOP}
        \EndIf
        \State Update $ (\x_{k+1},\y_{k+1}) = (\x_k,\y_k) + \alpha_k \d_k $, with $ \alpha_k \in (0,1] $ chosen via an Armijo line search
    \EndFor
\end{algorithmic}
\end{algorithm}

The algorithm starts from a feasible initial point $ (\x_0,\y_0) \in \Delta^V_{\varepsilon} $ and iteratively refines it. At step 3, 
the  first-order adversary oracle is thus queried with the point $ (\x_k,\y_k)$  and computes the set  $\UU_\eta(\x_k,\y_k)$ following the framework presented in the previous subsection, that is by suitably solving system~\eqref{system:my_B} for the $\delta$-active matrices in the point $ (\x_k,\y_k)$. The set $\UU_\eta(\x_k,\y_k)$ is then used to define $ \partial_\eta G(\x_k,\y_k) $. Step 4 invokes a Linear Minimization Oracle (LMO), defined by:

\begin{equation}
\label{eq:LMO}
\min_{(\x,\y) \in \Delta^V_{\varepsilon}} \max_{\h \in \partial_\eta G(\x_k,\y_k)} \h^\top (\x - \x_k, \y - \y_k).
\end{equation}
The LMO is modeled after the classical Frank-Wolfe   algorithm: in the standard setting, the oracle indeed minimizes a linear approximation of the objective function over the original feasible set. Here, we follow the same reasoning, but we replace the gradient with the $\eta$-subdifferential $\partial_\eta G(\x_k, \y_k)$ in the approximation. This generalization  preserves the central intuition of Frank–Wolfe (that is, moving toward a feasible point that minimizes a linear approximation of the objective), so the structure and purpose of the LMO remain fully aligned with the classical step.

Since min-max problems are difficult to solve directly, we reformulate \eqref{eq:LMO} as a minimization problem. In particular, recalling the definition of $\partial_\eta G$ as a convex combination of gradients, it is possible to write $\h$ by baricentric coordinates with respect to the gradients of the active functions in $ \UU_\eta(\x_k,\y_k) $:
$$
\begin{aligned}
\h &= (\h_{\x_k}, \h_{\y_k}) \\
  &= \left( \sum_{\Ub \in \UU_\eta(\x_k, \y_k)} \lambda_\Ub \cdot 2 \left( - (1 + \beta) \Ub + \beta \overline \Eb \right) \x_k, \; -\frac{1}{\gamma} \y_k \right) \, ,
\end{aligned}
$$
where $r = |\UU_\eta (\x_k, \y_k)|$, and $ \bm{\lambda} \in \R^r_+$ satisfies $ \sum_\Ub \lambda_\Ub = 1$.

Therefore, the inner maximization problem reduces to a linear program over the standard simplex, whose optimal value is necessarily attained at one of the simplex’s vertices. The LMO can thus be written as the following LP: 
$$
\begin{aligned}
\min_{\x, \y, \mu} \quad & \mu - \frac{1}{\gamma} \y_k^\top \y + \frac{1}{\gamma} \|\y_k\|^2 \\
\text{s.t.} \quad & (\x,\y) \in \Delta^V_{\varepsilon} \\
& \mu \geq \nabla_\x g_\Ub(\x_k,\y_k)^\top (\x - \x_k) \quad \forall \, \Ub \in \UU_\eta(\x_k, \y_k)
\end{aligned}
$$ 

In order to quantify progress, we define the LMO gap as follows.
\begin{definition}[$\eta$ gap function]
\label{def:eta_gap}
Given a point $(\x,\y)\in \Delta^V_{\varepsilon}$, the $\eta$ gap in $\Delta^V_{\varepsilon}$ is defined as:
$$
c_\eta(\x,\y) := - \min_{(\ss,\t) \in \Delta^V_{\varepsilon}} \max_{\h \in \partial_\eta G(\x,\y)} \h^\top (\ss-\x,\t-\y).
$$
\end{definition}
It holds that $ c_\eta(\x,\y) \geq 0 $ for all $ (\x,\y) \in \Delta^V_{\varepsilon} $.

\noindent
Finally, we compute the step size using an Armijo line search; to this end, given parameters $ \omega \in (0,1) $ and $ \sigma \in (0, \tfrac{1}{2}) $, and starting from a base step size $\omega^0=1$, we test 
the condition
\begin{equation}
\label{eq:Armijo_stepsize}
G((\x_k,\y_k) + \alpha \d_k) \leq G(\x_k,\y_k) - \sigma \alpha c_\eta(\x_k,\y_k)
\end{equation}
for $\alpha = \omega^m$ by increasing the integer $m\ge 0$. We then set $ \alpha_k = \omega^m$ for the smallest $m$  for which~\eqref{eq:Armijo_stepsize} is satisfied. 

\subsection{Theoretical convergence guarantees}
\label{sec:Convergence Results}
\textbf{Note:} In the remainder of this section, the explicit notation $ (\x, \y) $ is no longer necessary for the scope of the proofs. To simplify the presentation, we will instead use a single variable $ \z \in \mathbb{R}^{2n} $ to denote the concatenated coordinates.

The next result provides a convergence rate for Algorithm~\ref{Alg. FW}. In particular, we show that, choosing $\eta\leq\frac{\delta}{2L}$, for any small enough $\delta$, the algorithm will  give an $(\eta,\xi)$-Goldstein stationary point, which we define as follows.
\begin{definition}[\((\eta,\xi)\)-Goldstein stationary point]
A point $\z^\star\in\Delta^V_\varepsilon$ is called an 
\((\eta,\xi)\)-Goldstein stationary point for the minimization of $G$ over 
$\Delta^V_\varepsilon$ if it satisfies the following condition:
\[
    c_\eta(\z^\star) \le \xi.
\]
In other words, $\z^\star$ is \((\eta,\xi)\)-Goldstein stationary whenever the 
LMO certificate $c_\eta(\z^\star)$ does not exceed~$\xi$. 
This notion matches the stopping criterion of Algorithm~\ref{Alg. FW}, which terminates
as soon as $c_\eta(\z_k)\le\xi$.
\end{definition}

More specifically, the algorithm will satisfy the stopping condition and give an $(\eta,\xi)$-Goldstein stationary point after a finite number of iterations, which can be estimated as follows.

\begin{theorem}
\label{thm:alg_conv}
Let $ G(\z^*) = \min\limits_{\z \in \Delta_\epsilon^V} G(\z) $ and let $ \{\z_k\} $ be a sequence generated by Algorithm~\ref{Alg. FW}. Fix $\eta\leq\frac{\delta}{2L}$, where $L$ is the Lipschitz constant of $G$ defined in \eqref{eq:L} and let $ c^*_k = \min\limits_{i\in [0:k]} c_\eta(\z_i) $. Assume that the step size in the algorithm is determined using an Armijo line search with fixed parameter $ \sigma$ as in \eqref{eq:Armijo_stepsize}. Finally, observe that $D:= \text{diam}\left( \Delta^V_\epsilon \right)^2 = n+1>1$, and let $ \rho = \frac{D([G(\z_0) - G(\z^*)]}{\sigma}. $
Then,
$$
c^*_k \leq 
\begin{cases}
\displaystyle \frac{\rho }{(k+1)\eta}. & \text{if } k < \frac{\rho(1-\sigma)}{\eta} - 1\, , \\[10pt]
\displaystyle \sqrt{\frac{\rho}{(k+1)(1-\sigma)\eta}} & \text{otherwise.}
\end{cases}
$$

In particular, 
$$
\lim_{k \to \infty} c^*_k = 0 \quad \text{at a rate of } \mathcal{O}\left(\frac{1}{\sqrt{k\eta}}\right),
$$
that is, Algorithm~\ref{Alg. FW} reaches an \((\eta,\xi)\)-Goldstein stationary point in 
$O(1/(\eta\xi^2))$ iterations.
\end{theorem}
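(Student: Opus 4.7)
The plan is to establish a local quadratic majorization of $G$ along the Frank--Wolfe direction, extract from it an Armijo step-size lower bound, and then telescope, splitting the analysis into two regimes according to which of two competing constraints on the step is binding. Writing $\z=(\x,\y)$, $\d_k=\hat\z_k-\z_k$ and $\psi_k:=c_\eta(\z_k)$, the central technical lemma to prove is
\begin{equation}\label{eq:planmaj}
G(\z_k+\alpha\d_k)\;\le\;G(\z_k)-\alpha\psi_k+\tfrac{L\alpha^2}{2}\|\d_k\|^2\qquad\text{for all }\alpha\in(0,1]\text{ with }\alpha\|\d_k\|\le\eta,
\end{equation}
which is the nonsmooth analogue of the standard descent lemma used in smooth Frank--Wolfe.

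To prove \eqref{eq:planmaj} I would pick any $\Ub^\star\in\UU$ attaining $G(\z_k+\alpha\d_k)=g_{\Ub^\star}(\z_k+\alpha\d_k)$. Lipschitz continuity with constant $L$ together with $\alpha\|\d_k\|\le\eta\le\delta/(2L)$ first forces $\Ub^\star\in\UU^\delta(\z_k)$; a second-order expansion of the quadratic $g_{\Ub^\star}$ around $\z_k$ (Hessian bounded by $L$) applied together with the reverse expansion for the other $g_\Ub$, $\Ub\in\UU^\delta(\z_k)$, shows that $\Ub^\star$ satisfies the linearized system \eqref{system:my_B} at $(\ss,\t)=\z_k+\alpha\d_k$, so $\Ub^\star\in\UU_\eta(\z_k)$. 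By the LMO definition $\hat\h_k\T\d_k=-\psi_k$ and $\nabla g_{\Ub^\star}(\z_k)\T\d_k\le\hat\h_k\T\d_k$; combining this with the quadratic upper bound on $g_{\Ub^\star}$ itself and with $g_{\Ub^\star}(\z_k)\le G(\z_k)$ yields \eqref{eq:planmaj}.

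From \eqref{eq:planmaj} a direct calculation shows that the Armijo condition holds whenever $\alpha\le\bar\alpha_k:=\min\bigl\{1,\,\tfrac{2(1-\sigma)\psi_k}{LD},\,\tfrac{\eta}{\sqrt{D}}\bigr\}$, so backtracking returns $\alpha_k\ge\omega\bar\alpha_k$ and hence the descent
\[
G(\z_k)-G(\z_{k+1})\;\ge\;\sigma\omega\,\psi_k\,\min\!\Bigl\{\tfrac{2(1-\sigma)\psi_k}{LD},\,\tfrac{\eta}{\sqrt{D}}\Bigr\}.
\]
Telescoping from $0$ to $k$ and using $c^*_k\le\psi_i$ for every $i\le k$ produces
$G(\z_0)-G(\z^\star)\ge(k+1)\,\sigma\omega\,c^*_k\min\{\tfrac{2(1-\sigma)c^*_k}{LD},\tfrac{\eta}{\sqrt{D}}\}$. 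Splitting according to which term of the min is active gives exactly the two cases of the statement: in the linear branch (when $\eta/\sqrt{D}$ is active, i.e.\ $c^*_k$ is large), the cumulative descent is linear in $c^*_k$ and yields the $O(1/k)$ bound $c^*_k\le\rho/((k+1)\eta)$; in the quadratic branch it yields $c^*_k\le\sqrt{\rho/((k+1)(1-\sigma)\eta)}$. The two bounds cross exactly at $k+1=\rho(1-\sigma)/\eta$, which is the announced threshold, and inverting the $1/\sqrt{k}$ bound to solve $c^*_k\le\xi$ returns the $O(1/(\eta\xi^2))$ iteration count for reaching an $(\eta,\xi)$-Goldstein stationary point.

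The main obstacle is the inclusion $\Ub^\star\in\UU_\eta(\z_k)$ needed inside the proof of \eqref{eq:planmaj}: this is precisely the property for which the slack $L\eta^2$ in system \eqref{system:my_B} was engineered, so the argument amounts to writing out the second-order Taylor remainder of $g_\Ub-g_{\Ub^\star}$ explicitly and checking that it is absorbed by that slack. Once \eqref{eq:planmaj} is in hand, the remainder of the proof is a careful but routine nonsmooth adaptation of the Frank--Wolfe / Armijo telescoping argument, with the constant $\rho=D[G(\z_0)-G(\z^\star)]/\sigma$ falling out of the cascade above.
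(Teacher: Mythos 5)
Your proposal is correct and runs on the same technical engine as the paper's proof: the first-order Taylor expansion whose remainder (bounded via the Lipschitz constant $L$) is absorbed by the $L\eta^2$ slack in system~\eqref{system:my_B}, showing that any matrix active at a point within distance $\eta$ of $\z_k$ must lie in $\UU_\eta(\z_k)$; an Armijo backtracking lower bound on $\alpha_k$; and the telescoped two-regime split with crossing point $k+1=\rho(1-\sigma)/\eta$. The packaging, however, differs in a way worth noting. The paper argues by dichotomy at the accepted iterate: Case~1 (some matrix common to $\UU_\eta(\z_k)$ and $\UU(\z_{k+1})$, yielding the smooth descent inequality and the backtracking estimate $\alpha_k\ge\min\{1,2\omega(1-\sigma)c_\eta(\z_k)/(L\|\d_k\|^2)\}$) versus Case~2 (no common matrix, whence by contradiction $\alpha_k\|\d_k\|>\eta$). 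You instead prove a single majorization valid for all steps with $\alpha\|\d_k\|\le\eta$ -- your lemma is exactly the contrapositive of the paper's Case~2 fused with Case~1's descent bound -- and push everything through generic backtracking. This unification is cleaner, but it costs you a factor $\omega$ on the long-step branch ($\alpha_k\ge\omega\eta/\|\d_k\|$ rather than the paper's $\alpha_k>\eta/\|\d_k\|$, which Case~2 obtains for free because there the \emph{accepted} step is itself long), and your constants ($2(1-\sigma)\psi_k/(LD)$, $\eta/\sqrt{D}$) reproduce the stated $\rho=D[G(\z_0)-G(\z^*)]/\sigma$ only after absorptions of the same kind the paper itself performs (e.g., ``for small enough $\delta$ it holds $\omega>\delta$'' and the crude bound $\max\{\|\d_k\|,\|\d_k\|^2\}\le D$); up to such constant factors, both regimes, the threshold, and the $O(1/(\eta\xi^2))$ complexity come out identically. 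One shared blemish: like the paper, you verify the gradient inequality of system~\eqref{system:my_B} at $(\ss,\t)=\z_k+\alpha\d_k$ under the Euclidean bound $\alpha\|\d_k\|\le\eta$, but neither argument checks the system's $\ell_\infty$ constraint $\|(\ss,\t)-(\x,\y)\|_\infty\le\eta/\sqrt{n}$, which is strictly stronger than the Euclidean ball condition; since the paper commits the identical omission, this is not a gap relative to its standard of rigor.
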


\begin{proof}
First, recall that the quantity $ c_\eta(\z_k) $ corresponds to:
\begin{equation}
\label{eq:eta_gap}
c_\eta(\z_k) = -\nabla g_\Ub(\z_k)^\top \d_k,
\end{equation}
where $ \d_k $ is the chosen descent direction and $ \Ub \in \UU_\eta(\z_k) $ is the approximate active matrix that provides the steepest descent. 

As mentioned before, our LMO selects a direction $ \d_k $ by solving a min-max problem. Since the inner problem maximizes a linear function over the convex hull of gradients $ \nabla g_\Ub(\z_k) $, such a maximum is hence attained at an extreme point of this convex set. Therefore, there must exist a matrix $ \Ub \in \UU_\eta(\z_k) $ such that equality \eqref{eq:eta_gap} holds, by the fundamental theorem of linear programming.

Adopting an Armijo line search, we can write for every $k$:
\begin{equation}   G(\z^*) - G(\z_0) \leq G(\z_{k+1}) - G(\z_0) = \sum_{i=0}^k \left[ G(\z_{i+1}) - G(\z_{i}) \right] \leq - \sigma \sum_{i=0}^k \alpha_i c_\eta(\z_i)\, . \label{eq:armijo_line_search}
\end{equation}
At each iteration $ k $, there are two possibilities:

\textbf{Case 1:} There is a matrix $ \Ub $ common to both $ \UU_\eta(\z_k) $ and $ \UU(\z_{k+1}) $. Then, the following descent condition holds:
\begin{align}
    \label{eq:descent1}
    G(\z_{k+1}) &= g_\Ub(\z_{k+1}) \\
    \label{eq:descent2}
    &\leq g_\Ub(\z_k) + \alpha_k \nabla g_\Ub(\z_k)^\top \d_k + \frac{\alpha_k^2 L}{2} \|\d_k\|^2 \\
    \label{eq:descent3}
    &\leq G(\z_k) - \alpha_k c_\eta(\z_k) + \frac{\alpha_k^2 L}{2} \|\d_k\|^2  \, ,
\end{align}
where \eqref{eq:descent1} follows from the fact that $ \Ub \in \UU(\z_{k+1}) $, while \eqref{eq:descent3} derives from $g_\Ub \le G$.

We can combine the descent condition with the Armijo line search, as done in \cite{Bomz20b}: let $\alpha_k$ be the accepted Armijo step and $\tilde\alpha=\frac{\alpha_k}{\omega}$ the previous (rejected) trial. Since $\tilde\alpha$ was rejected, it must hold:
\[
G(\z_k+\tilde\alpha \d_k)>G(\z_k)-\sigma\tilde\alpha c_\eta(\z_k).
\]
Combined with the descent bound at $\tilde\alpha$, the following relation can be obtained:
$$G(\z_k)-\tilde\alpha c_\eta(\z_k)+\frac{L}{2}\tilde\alpha^2\|\d_k\|^2\,\ge \, G(\z_k+\tilde\alpha \d_k)\, >G(\z_k)-\sigma\tilde\alpha c_\eta(\z_k) \, , $$
which after simple rearrangements yields 
$$ \tilde\alpha>\frac{2(1-\sigma)c_\eta(\z_k)}{L\|\d_k\|^2} \, .$$
Since $\tilde\alpha=\frac{\alpha_k}{\omega}$, accounting for the upper bound at 1, we get: 
\[
\;\alpha_k \ge \min\!\Big\{1,\;\frac{2\omega(1-\sigma)\,c_\eta(\z_k)}{L\|\d_k\|^2}\Big\}.
\]

\textbf{Case 2:} The active set \( \UU(\z_{k+1}) \) does not contain any matrix from \( \UU_\eta(\z_k) \). We now show that in this case $\alpha_k \|\d_k\| = \|\z_{k+1}-\z_k\|>\eta$, therefore obtaining $ \alpha_k> \frac{\eta}{\|\d_k\|} $.

Suppose by contradiction that $ \|\z_{k+1}-\z_k\|\leq \eta $. The hypothesis $\eta \leq \frac{\delta}{2L}$ implies that $ \|\z_{k+1}-\z_k\| \leq \frac{\delta}{2L} $, and byLipschitz continuity of the functions $g_\Ub$, only the functions in $ \UU^\delta(\z_k) $ can be active functions in $\z_{k+1}$, so $\UU(\z_{k+1}) \subseteq \UU^\delta(\z_k) $. Let us choose a matrix $ \bar\Ub \in \UU(\z_{k+1}) $. We will now show that for every $\Ub\in\UU^\delta(\z_k)$ the inequality
\begin{equation}
\label{eq:systemB_1}
\langle \nabla g_{\Ub}(\x,\y) - \nabla g_{\bar\Ub}(\x,\y), (\ss,\t) - (\x,\y) \rangle \leq g_{\bar\Ub}(\x,\y) - g_{\Ub}(\x,\y) + L\eta^2
\end{equation}
holds. This would automatically imply that $\bar\Ub$ satisfies system~\eqref{system:my_B}, so \\* $\bar\Ub \in \UU(\z_{k+1}) \,\cap \, \UU_\eta(\z_k)$, providing a contradiction.

Fix $\bar\Ub\in\UU(\z_{k+1})$ and $\Ub\in\UU^\delta(\z_k)$. By definition of activity at $\z_{k+1}$,
\begin{equation}
\label{eq:activity_Uj}
g_{\bar\Ub}(\z_{k+1}) - g_{\Ub}(\z_{k+1}) \ge 0.
\end{equation}
Apply the first-order Taylor expansion of each $g_\Ub$ around $\z_k$ with the remainder bounded by the Lipschitz continuity of the gradients: for any $\Ub$ there exists a remainder $r_\Ub$ with
$$
g_\Ub(\z_{k+1}) = g_\Ub(\z_k) + \alpha_k\langle \nabla g_\Ub(\z_k),\d_k\rangle + r_\Ub,
\qquad |r_\Ub| \le \tfrac{L}{2}\alpha_k^2\|\d_k\|^2.
$$
Subtract the expansions for $\bar\Ub$ and $\Ub$ and use \eqref{eq:activity_Uj}:
$$
0 \le \big(g_{\bar\Ub}(\z_k)-g_{\Ub}(\z_k)\big)
    + \alpha_k\langle \nabla g_{\bar\Ub}(\z_k)-\nabla g_{\Ub}(\z_k),\d_k\rangle
    + (r_{\bar\Ub}-r_{\Ub}).
$$
Rearranging gives
\begin{equation}
\label{eq:Taylor_with_r}
\alpha_k\langle \nabla g_{\Ub}(\z_k)-\nabla g_{\bar\Ub}(\z_k),\d_k\rangle
\le g_{\bar\Ub}(\z_k)-g_{\Ub}(\z_k) + (r_{\bar\Ub}-r_{\Ub}). 
\end{equation}
Using the remainder bounds $|r_{\bar\Ub}|,|r_{\Ub}|\le \tfrac{L}{2}\alpha_k\|\d_k\|^2$ we get
$$
r_{\bar\Ub}-r_{\Ub} \le |r_{\bar\Ub}| + |r_{\Ub}|
\le L\alpha_k^2\|\d_k\|^2 \le L\eta^2.
$$
Substitute into \eqref{eq:Taylor_with_r} to obtain:
\begin{equation}
\label{eq:Taylor_eta}
\alpha_k\langle \nabla g_{\Ub}(\z_k)-\nabla g_{\bar\Ub}(\z_k),\d_k\rangle
\le g_{\bar\Ub}(\z_k)-g_{\Ub}(\z_k) + L\eta^2.    
\end{equation}
The last inequality coincides with \eqref{eq:systemB_1} for $(\x,\y)=\z_k$ and $(\mathbf{s},\t)=\z_k + \alpha_k\d_k$, providing the required contradiction and therefore the lower bound $ \alpha_k > \frac{\eta}{\| \d_k\|} $.

Combining the two cases and noting that for small enough $\delta$ it holds $\omega > \delta$, we obtain a general lower bound for $ \alpha_k $:
\begin{equation}\label{eq:alpha_k_bound}
\alpha_k \geq \min \lk1, \frac{\eta}{\|\d_k\|}, \frac{\eta(1 - \sigma)c_\eta(\z_k)}{ \|\d_k\|^2} \rk.
\end{equation}
We note that $ \max \{ \|\d_k\|,\|\d_k\|^2\} $ is bounded by the constant $ D>1 $ due to its definition in Theorem~\ref{thm:alg_conv}, then combining this with the previous inequality \eqref{eq:alpha_k_bound}, reversing the inequality \eqref{eq:armijo_line_search} and recalling the definition of $c^*_k$ in the statement of Theorem~\ref{thm:alg_conv}, we obtain:
\begin{equation}
\label{eq:rate_for_c*}
G(\z_0) - G(\z^*) \geq (k+1) \sigma c_k^* \min \lk1, \frac{\eta}{D}, \frac{\eta(1 - \sigma)c_k^*}{D} \rk.
\end{equation}
Now, assuming $ \eta \leq \frac{\delta}{2L} < D $ (reasonable for small $ \delta $), we obtain:
\begin{itemize}
    \item If $ \frac{\eta}{D} <  \frac{\eta(1 - \sigma)c_k^*}{D} $ (and thus $ (1-\sigma) c^*_k > 1 $), then \eqref{eq:rate_for_c*} implies
    $$
    c^*_k \leq \frac{D [G(\z_0) - G(\z^*)]}{(k+1)\sigma\eta}.
    $$
    Observe that setting $ \rho = \frac{D[G(\z_0) - G(\z^*)]}{\sigma} $, we necessarily have $$ 1 < (1-\sigma)\frac{D [G(\z_0) - G(\z^*)]}{(k+1)\sigma\eta} \quad \mbox{and thus}\quad k < \frac{\rho(1-\sigma)}{\eta} - 1\, .$$
    \item Otherwise, $ (1-\sigma) c^*_k \leq 1 $ and~\eqref{eq:rate_for_c*} yields 
    $$
    c^*_k \leq \left(\frac{D [G(\z_0) - G(\z^*)]}{(k+1)\sigma(1-\sigma)\eta}\right)^{1/2} {= \sqrt{\frac\rho{(k+1)(1-\sigma)\eta}}\; .}
    $$
\end{itemize}
This proves the assertion. \end{proof}

This result yields a convergence rate for the stopping criterion of Algorithm~\ref{Alg. FW}. We now show that, if the algorithm is run with decreasing values of $\eta$, there exists at least one sequence converging to a Clarke stationary point.
\begin{definition}[Clarke dual gap]
Given a point $\z\in \Delta^V_\epsilon$, the Clarke dual gap on $\Delta^V_\epsilon$ is defined as:
$$
c(\z) := - \min_{\w \in \Delta^V_\epsilon} \max_{\h \in \partial G(\z)} \h^\top (\w - \z).
$$
\end{definition}

First, it is necessary to recall the definition of \emph{Clarke stationary points} for minimization of Lipschitz continuous functions over a compact set, which is linked to the \emph{Clarke subdifferential}.

\begin{definition}[First-order stationary point]
\label{def:first_order_stat}
A point $ \z^* $ is called a \emph{Clarke stationary point} for a locally Lipschitz function $ G $ to be minimized over the set $\Delta_\epsilon^V$ if

$$
\max_{\h \in \partial G(\z^*)} \h^\top (\w - \z^*)\geq 0 \quad \forall\ \w \in \Delta_\epsilon^V.
$$               
\end{definition}

\begin{theorem}
\label{thm:final_convergence}

Let $ G(\z^*) = \min\limits_{\z \in \Delta_\epsilon^V} G(\z) $ and let $ \{\z_k\} $ be a sequence generated by Algorithm~\ref{Alg. FW}. Assume that the algorithm is run with a sequence $ \eta_k = \frac{\eta_0}{(1+k)^\tau} $, with $ 0 < \eta_0 < \frac{\delta}{2L} $ and $ 0 < \tau < 1$. As in Theorem~\ref{thm:alg_conv}, $L$ is the Lipschitz constant~\eqref{eq:L} of $G$, the step size is determined using an Armijo line search with fixed parameter $ \sigma$ and $D:= \text{diam}\left( \Delta^V_\epsilon \right)^2 = n+1>1$. Fixing $ \rho = \frac{D[G(\z_0) - G(\z^*)]}{\sigma\eta_0} $, let $ c^*_k = \min\limits_{i\in [0:k]} c_{\eta_i}(\z_i) $.
Then,
\begin{equation}
\label{eq:thm4_ck*}
c^*_k \leq 
\begin{cases}
\displaystyle \frac{\rho}{\sum_{i=0}^{k} \frac{1}{(i+1)^\tau}}. & \text{if } k < \sqrt[1-\tau]{\rho(1-\sigma)(1-\tau)+1} - 1\, , \\[10pt]
\displaystyle \sqrt{\frac{\rho}{(1-\sigma)\sum_{i=0}^{k} \frac{1}{(i+1)^\tau}}} & \text{otherwise.}
\end{cases}
\end{equation}

In particular, 
$$
\lim_{k \to \infty} c^*_k = 0 \quad \text{at a rate of } \mathcal{O}\left(k^{\frac{{1-\tau}}{2}}\right).
$$

Moreover, there exists a limit point of the sequence $ \{\z_k\} $ that is a Clarke stationary point of the problem $\min\limits_{\z \in \Delta_\epsilon^V} G(\z)$.
\end{theorem}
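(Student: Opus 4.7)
The plan is to adapt the step-size/telescoping argument of Theorem~\ref{thm:alg_conv} to the time-varying sequence $\eta_k$, and then combine the resulting convergence $c_k^\ast\to 0$ with the two properties of $\partial_\eta G$ established in Proposition~\ref{prop:Goldstein_subdiff_properties} to promote a cluster point of the iterates to a Clarke stationary point in the sense of Definition~\ref{def:first_order_stat}.

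First, I would retrace the derivation of the step-size lower bound \eqref{eq:alpha_k_bound} iteration-wise, replacing the constant $\eta$ by $\eta_k$. Both cases from Theorem~\ref{thm:alg_conv} carry over without change: the Armijo argument in Case~1 only uses Lipschitz continuity, while in Case~2 the linearization trick leading to \eqref{eq:Taylor_eta} requires only $\eta_k\le \eta_0<\delta/(2L)$, which is assumed. Hence $\alpha_k\ge \min\lk 1,\eta_k/\|\d_k\|,\eta_k(1-\sigma)c_{\eta_k}(\z_k)/\|\d_k\|^2\rk$. Telescoping the Armijo descent and bounding $\|\d_k\|,\|\d_k\|^2\le D>1$ gives
\[
G(\z_0)-G(\z^\ast)\;\ge\;\sigma\, c_k^\ast\sum_{i=0}^{k}\min\!\lk 1,\tfrac{\eta_i}{D},\tfrac{\eta_i(1-\sigma)c_k^\ast}{D}\rk.
\]
Since $\eta_i\le\eta_0<D$ the outer ``$1$'' is never attained, and splitting on whether $(1-\sigma)c_k^\ast\ge 1$ or $<1$ yields, after substituting $\sum_{i=0}^{k}\eta_i=\eta_0\sum_{i=0}^{k}(i+1)^{-\tau}$ and using the definition of $\rho$, exactly the two branches in \eqref{eq:thm4_ck*}. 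The threshold on $k$ separating them is obtained by standard integral estimates of $\sum_{i=0}^{k}(i+1)^{-\tau}$; divergence of this sum for $\tau<1$ forces the square-root branch to dominate asymptotically, giving the announced rate.

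For the qualitative conclusion, take a subsequence $(\z_{k_j})$ with $c_{\eta_{k_j}}(\z_{k_j})\to 0$ (the sequence realizing $c_k^\ast$ does the job) and, by compactness of $\Delta_\varepsilon^V$, extract a further subsequence $\z_{k_j}\to\bar\z$. Fix any $\w\in\Delta_\varepsilon^V$: from the definition of $c_\eta$ there exists $\h_j\in\partial_{\eta_{k_j}}G(\z_{k_j})$ with $\h_j^\top(\w-\z_{k_j})\ge -c_{\eta_{k_j}}(\z_{k_j})$. The $\h_j$ lie in the bounded set of convex combinations of gradients, so along a further subsequence $\h_j\to\bar\h$. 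For any fixed $\eta>0$, eventually $\eta_{k_j}\le\eta$; the defining system~\eqref{system:my_B} is monotone in $\eta$ (both the slack $L\eta^2$ and the radius $\eta/\sqrt n$ shrink with $\eta$), hence $\UU_{\eta_{k_j}}(\z_{k_j})\subseteq\UU_\eta(\z_{k_j})$ and consequently $\h_j\in\partial_\eta G(\z_{k_j})$. Upper semicontinuity of $\partial_\eta G$ (Proposition~\ref{prop:Goldstein_subdiff_properties}(1)) yields $\bar\h\in\partial_\eta G(\bar\z)$, and since $\eta>0$ was arbitrary, Proposition~\ref{prop:Goldstein_subdiff_properties}(2) gives $\bar\h\in\partial G(\bar\z)$. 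Passing to the limit in the displayed inequality produces $\bar\h^\top(\w-\bar\z)\ge 0$, hence $\max_{\h\in\partial G(\bar\z)}\h^\top(\w-\bar\z)\ge 0$ for all $\w$, i.e., $\bar\z$ is Clarke stationary.

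The main obstacle I expect is the step bridging the time-varying $\eta_{k_j}$ with the fixed $\eta$ needed to invoke the upper semicontinuity of $\partial_\eta G$: one has to verify cleanly the monotonicity $\UU_{\eta'}\subseteq\UU_\eta$ for $\eta'\le\eta$ (with the same underlying $\delta$), so that the approximate selections $\h_j$ actually lie in the bigger set at which Proposition~\ref{prop:Goldstein_subdiff_properties}(1) can be applied. Once this is justified, the combination of Armijo telescoping with the nested-intersection identity $\bigcap_{\eta>0}\partial_\eta G=\partial G$ is essentially mechanical; the bookkeeping on the threshold for $k$ in \eqref{eq:thm4_ck*} is a routine integral estimate and can be relegated to a short calculation.
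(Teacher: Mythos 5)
Your proposal is correct. The rate portion is essentially the paper's own argument: the step-size bound \eqref{eq:alpha_k_bound} carries over with $\eta$ replaced by $\eta_i$, telescoping the Armijo descent gives the analogue of \eqref{eq:rate_for_c*} with $\sum_{i=0}^k \min\lk 1,\eta_i/D,\eta_i(1-\sigma)c_k^*/D\rk$, and since $\eta_i\le\eta_0<D$ the two-case split on $(1-\sigma)c_k^*\gtrless 1$ together with the integral bounds $\frac{(k+1)^{1-\tau}-1}{1-\tau}\le\sum_{i=0}^k (i+1)^{-\tau}\le\frac{(k+1)^{1-\tau}}{1-\tau}$ yields exactly \eqref{eq:thm4_ck*}, including the threshold on $k$. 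Where you genuinely diverge from the paper is the Clarke-stationarity part. The paper runs a local active-set \emph{identification} argument: by finiteness of $\UU$ there is $\delta^*>0$ with $\UU^{\delta^*}(\z^*)=\UU(\z^*)$, then a radius $r^*$ such that $\UU(\z)\subset\UU(\z^*)$ on $B_{r^*}(\z^*)$, and an explicit threshold $\eta^*=\sqrt{1+\bar\delta/L}-1$ below which no inactive matrix can satisfy system~\eqref{system:my_B} (the linearized left-hand side is bounded below by $-2L\eta$ while the right-hand side is at most $-\bar\delta+L\eta^2$); hence $\UU_{\eta_h}(\z_h)=\UU(\z_h)$, so $c_{\eta_h}(\z_h)=c(\z_h)$, and a gap comparison with $c(\z^*)$ forces $c(\z^*)=0$. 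You instead never identify active sets: you take near-maximizing selections $\h_j\in\partial_{\eta_{k_j}}G(\z_{k_j})$, use the monotonicity $\partial_{\eta'}G(\z)\subseteq\partial_\eta G(\z)$ for $\eta'\le\eta$ — which, as you suspected, is immediate since with $\delta$ fixed a witness $(\ss,\t)$ for the $\eta'$-system satisfies the $\eta$-system (both the slack $L\eta^2$ and the radius $\eta/\sqrt n$ only grow) — then invoke upper semicontinuity (Proposition~\ref{prop:Goldstein_subdiff_properties}(1)) to get $\bar\h\in\partial_\eta G(\bar\z)$ for every fixed $\eta>0$, the intersection identity (Proposition~\ref{prop:Goldstein_subdiff_properties}(2)) to conclude $\bar\h\in\partial G(\bar\z)$, and pass to the limit in $\h_j\T(\w-\z_{k_j})\ge -c_{\eta_{k_j}}(\z_{k_j})$; since the selection $\bar\h$ may depend on $\w$ but Definition~\ref{def:first_order_stat} is a per-$\w$ condition, this is harmless. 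Your route is arguably cleaner: it sidesteps the paper's comparison $c(\z_h)\ge c(\z^*)$ between Clarke gaps at \emph{different} points (which implicitly needs gradient continuity to be airtight, since the gradients in $\partial G(\z_h)$ are evaluated at $\z_h$, not $\z^*$), at the price of relying on the u.s.c. property rather than deriving the extra structural payoff of the paper's argument — exact identification $\UU_\eta(\z_h)=\UU(\z_h)$ near the limit with an explicit $\eta^*$, which also explains the empirical behavior discussed in Section~\ref{section:experiment}. One cosmetic note: your derivation correctly produces the \emph{decaying} rate $c_k^*=\mathcal{O}\bigl(k^{-(1-\tau)/2}\bigr)$, whereas the theorem statement's $\mathcal{O}\bigl(k^{\frac{1-\tau}{2}}\bigr)$ has a sign slip in the exponent.
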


\begin{proof}
Following the exact same reasoning as in Theorem~\ref{thm:alg_conv}, we derive a slight modification of~\eqref{eq:rate_for_c*}:
\begin{equation}
\label{eq:rate_for_c*_modified}
G(\z_0) - G(\z^*) \geq \sigma c_k^* \sum_{i=0}^{k}\min\lk1, \frac{\eta_i}{D}, \frac{\eta_i(1 - \sigma)c_k^*}{D} \rk.
\end{equation}
Assuming, as in Theorem~\ref{thm:alg_conv}, that $ \eta_0 \leq \frac{\delta}{2L} < D $, we can simplify the minimum term, obtaining:
$$
G(\z_0) - G(\z^*) \geq \sigma c_k^* \min\lk \frac{\eta_0}{D}, \frac{\eta_0(1 - \sigma)c_k^*}{D}\rk\sum_{i=0}^{k}\frac{1}{(i+1)^\tau}.
$$
We finally obtain the two cases as in Theorem~\ref{thm:alg_conv}:
\begin{itemize}
    \item If $ (1-\sigma) c^*_k > 1 $, then 
    $$
    c^*_k \leq \frac{D [G(\z_0) - G(\z^*)]}{\sigma\eta_0\sum_{i=0}^{k}\frac{1}{(i+1)^\tau}}.
    $$
    \item Otherwise,
    $$
    c^*_k \leq \left(\frac{D [G(\z_0) - G(\z^*)]}{\sigma(1-\sigma)\eta_0\sum_{i=0}^{k}\frac{1}{(i+1)^\tau}}\right)^{1/2}.
    $$
\end{itemize}
Putting $ \rho = \frac{D[G(\z_0) - G(\z^*)]}{\sigma\eta_0} $ gives~\eqref{eq:thm4_ck*}.

Note that, for $\tau=0$, we would retrieve ~\eqref{eq:rate_for_c*}, while for $0<\tau<1$, we have $$\frac{(k+1)^{1-\tau}-1}{1-\tau}\leq \sum_{i=0}^{k}\frac{1}{(i+1)^\tau} \leq \frac{(k+1)^{1-\tau}}{1-\tau}$$ for $k$ sufficiently large, which guarantees the rate of convergence for $c^*_k$. 

To conclude, we prove that there exists a limit point $\z^*$ of the the sequence $\{\z_k\}$ generated by the algorithm such that $c(\z^*) = 0$. By Definition~\ref{def:first_order_stat}, this implies that $\z^*$ is a Clarke stationary point. 3

Since $c^*_k \to 0$, we can extract from the sequence generated by the algorithm a subsequence $\{\z_h\}$ converging to $\z^*$ such that $c_{\eta_h}(\z_h) \to 0$.

Because any matrix $\Ub$ is either active or inactive in $\z^*$, there exists $ \delta^* > 0$ such that $\UU^{\delta^*}(\z^*) = \UU(\z^*)$. 
All functions $g_\Ub$ are Lipschitz continuous with a common constant $L$, hence we may choose a radius $r^* < \frac{\delta^*}{2L}$ such that for every $\z \in B_{r^*}(\z^*)$:
$$
\UU^{\bar{\delta}}(\z) = \UU(\z), \qquad \text{where } \bar{\delta} = \delta^* - 2Lr^* > 0.
$$ 
In particular, no function inactive at $\z^*$ can become active inside $B_{r^*}(\z^*)$: 
$$
\UU(\z) \subset \UU(\z^*) \qquad \forall \, \z \in B_{r^*}(\z^*).
$$ 
We now restrict the subsequence $\{\z_h\}$ to the indices for which $\z_h \in B_{r^*}(\z^*)$ and we prove that for such $\z_h$,
$$
\UU_{\eta}(\z_h) = \UU(\z_h).
$$ 
Fix such an index $h$. Since $\UU^{\bar{\delta}}(\z_h) = \UU(\z_h)$, any function $g_{\bar{\Ub}}$ that is not active at $\z_h$ satisfies
$$
g_{\bar{\Ub}}(\z_h) - g_{\Ub}(\z_h) < -\bar{\delta}, \qquad \forall \, \Ub \in \UU(\z_h)
$$
From system~\eqref{system:my_B}, a matrix $\bar{\Ub}$ belongs to $\UU_{\eta}(\z_h)$ only if
$$
\langle \nabla g_{\Ub}(\z_h) - \nabla g_{\bar\Ub}(\z_h), \z - \z_h \rangle \leq g_{\bar\Ub}(\z_h) - g_{\Ub}(\z_h) + L\eta^2 \leq -\bar{\delta} + L\eta^2 \;\;\; \forall \Ub \in \UU^\delta(\z_h)
$$
Since $\z$ is restricted by $\|\z - \z_h\| \leq \eta$, we can bound the left-hand side as
$$
\left|\langle \nabla g_{\Ub}(\z_h) - \nabla g_{\bar\Ub}(\z_h), \z - \z_h \rangle\right| \leq \|\nabla g_{\Ub}(\z_h) - \nabla g_{\bar\Ub}(\z_h)\|\|\z-\z_h\|\leq 2L\eta \, .
$$
Hence,
$$
\langle \nabla g_{\Ub}(\z_h) - \nabla g_{\bar\Ub}(\z_h), \z - \z_h \rangle \geq -2L\eta \, .
$$
If $\eta$ is chosen such that
$$
2L\eta < \bar{\delta} - L \eta^2 \, ,
$$
no inactive $\Ub$ satisfies the inequality defining $\UU_\eta(\z_h)$. Therefore, for all $\eta \le \eta^* = \sqrt{1 + \frac{\bar{\delta}}{L}}-1$,  we have

$$
\UU_{\eta}(\z_h) = \UU(\z_h) \qquad \text{for all such } \z_h.
$$

We thus obtain the chain of inclusions:
$$
\UU_{\eta}(\z_h) = \UU(\z_h) \subset \UU(\z^*), \quad \forall \, \z_h \in B_{r^*}(\z^*), \, \forall \, \eta \leq \eta^* \, .
$$
Consequently, whenever $\eta_h \le \eta^*$,
$$
c_{\eta_h}(\z_h) = c(\z_h) \geq c(\z^*).
$$
Since the left-hand side converges to $0$, we conclude that $c(\z^*)=0$, as claimed.

This proves that $ \z^* $ satisfies the first-order stationarity condition in the Clarke sense (Definition~\ref{def:first_order_stat}), therefore there exists an accumulation point of the iterates produced by the algorithm that is a Clarke stationary points.
\end{proof}

This result shows that, if Algorithm~\ref{Alg. FW} is executed with progressively smaller values of $\eta$, there exists at least a subsequence converging to a \emph{Clarke stationary point}. It should be noted that this theoretical result does not guarantee convergence to a global or even a local minimum. However, as will be shown in the next chapter, the algorithm empirically converges to local minima in practice. This observation suggests that coupling the method with a multistart or similar global-search strategy may yield solutions related to large common cliques.

\section{Experimental results}
\label{section:experiment}
In this section, we provide an in-depth analysis of the results for the instances derived by modifying a selection of the DIMACS graphs, as summarized in Tables~\ref{table:1}-\ref{table:3}.
Each experiment was conducted as follows. Given a graph, we randomly selected a fixed proportion of edges to form a backbone network. We then generated 50 different graphs by independently adding edges according to a specified probability, forming the set of adjacency matrices $\UU$. The algorithm parameters were fixed, and for each configuration we ran 10 independent experiments, reporting the averaged results.
The ground-truth maximum common clique size was computed using a standard discrete optimization method, which becomes computationally expensive for larger graphs (instances for which the ground-truth computation was computationally infeasible within a reasonable time limit were not included in the experiments).

Specifically, as shown in Tables~\ref{table:1}-\ref{table:3}, for each DIMACS graph we tested nine configurations obtained by varying two parameters: (i) the backbone fraction $ b \in \{0.25, 0.5, 0.75\} $, representing the proportion of edges forming the common submatrix shared across all matrices in $\UU$; and (ii) the edge addition probability $ p \in \{0.25, 0.5, 0.75\} $, defining the probability of adding additional edges outside the backbone to each matrix $\Ub$. For each configuration, we report the parameter values $b$ and $p$, the largest common clique size found (\textbf{max}), the average common clique size over multiple runs (\textbf{mean}), the standard deviation (\textbf{std}), and the ground-truth maximum common clique size (\textbf{real max}). Values in the \textbf{max} column are highlighted in bold when they match the \textbf{real max}. 

\begin{table}
\centering
\caption{Results obtained by Algorithm \ref{Alg. FW} on  instances from DIMACS dataset (part I).}
\label{table:1}
\begin{tabular}{ccccccc}
\hline\noalign{\smallskip}
\textbf{Graph} & $b$ & $p$ & \textbf{max} & \textbf{mean} & \textbf{std} & \textbf{real max}\\
\noalign{\smallskip}\hline\noalign{\smallskip}
\multirow{9}{*}{C125.9}
& 0.25 & 0.25 & \textbf{5} & 4.00 & 0.45 & 5 \\
& 0.25 & 0.5 & 5 & 3.90 & 0.70 & 6 \\
& 0.25 & 0.75 & 5 & 4.00 & 0.63 & 6 \\
& 0.5 & 0.25 & 7 & 6.10 & 0.70 & 9 \\
& 0.5 & 0.5 & 8 & 6.60 & 0.66 & 9 \\
& 0.5 & 0.75 & 7 & 6.20 & 0.75 & 9 \\
& 0.75 & 0.25 & 12 & 10.40 & 1.02 & 14 \\
& 0.75 & 0.5 & 11 & 10.10 & 1.14 & 14 \\
& 0.75 & 0.75 & 12 & 10.50 & 0.81 & 15 \\
\noalign{\smallskip}\hline\noalign{\smallskip}
\multirow{9}{*}{C250.9}
& 0.25 & 0.25 & 5 & 4.10 & 0.30 & 6 \\
& 0.25 & 0.5 & 5 & 4.40 & 0.49 & 6 \\
& 0.25 & 0.75 & 5 & 4.40 & 0.49 & 6 \\
& 0.5 & 0.25 & 7 & 6.50 & 0.50 & 11 \\
& 0.5 & 0.5 & 8 & 7.00 & 0.63 & 10 \\
& 0.5 & 0.75 & 8 & 6.40 & 0.80 & 11 \\
& 0.75 & 0.25 & 15 & 12.60 & 1.11 & 18 \\
& 0.75 & 0.5 & 16 & 12.20 & 1.47 & 18 \\
& 0.75 & 0.75 & 16 & 12.80 & 1.38 & 18 \\
\noalign{\smallskip}\hline\noalign{\smallskip}
\multirow{9}{*}{DSJC500\_5}
& 0.25 & 0.25 & 4 & 3.60 & 0.52 & 6 \\
& 0.25 & 0.5 & 4 & 3.90 & 0.30 & 6 \\
& 0.25 & 0.75 & 5 & 3.60 & 0.52 & 6 \\
& 0.5 & 0.25 & 6 & 5.00 & 0.63 & 8 \\
& 0.5 & 0.5 & 6 & 5.30 & 0.48 & 8 \\
& 0.5 & 0.75 & 6 & 5.20 & 0.40 & 8 \\
& 0.75 & 0.25 & 8 & 6.60 & 0.70 & 10 \\
& 0.75 & 0.5 & 8 & 6.80 & 0.63 & 10 \\
& 0.75 & 0.75 & 8 & 6.70 & 0.67 & 10 \\
\noalign{\smallskip}\hline\noalign{\smallskip}
\multirow{9}{*}{brock200\_2}
& 0.25 & 0.25 & \textbf{4} & 3.20 & 0.63 & 4 \\
& 0.25 & 0.5 & 4 & 3.20 & 0.92 & 5 \\
& 0.25 & 0.75 & 4 & 3.40 & 0.52 & 5 \\
& 0.5 & 0.25 & \textbf{6} & 4.60 & 0.92 & 6 \\
& 0.5 & 0.5 & 5 & 4.50 & 0.50 & 7 \\
& 0.5 & 0.75 & 6 & 4.90 & 0.70 & 7 \\
& 0.75 & 0.25 & 7 & 5.50 & 0.85 & 8 \\
& 0.75 & 0.5 & 7 & 5.20 & 0.63 & 8 \\
& 0.75 & 0.75 & 7 & 6.00 & 0.63 & 8 \\
\noalign{\smallskip}\hline\noalign{\smallskip}
\multirow{9}{*}{brock200\_4}
& 0.25 & 0.25 & 4 & 3.70 & 0.46 & 5 \\
& 0.25 & 0.5 & \textbf{5} & 3.90 & 0.70 & 5 \\
& 0.25 & 0.75 & \textbf{5} & 3.80 & 0.67 & 5 \\
& 0.5 & 0.25 & 6 & 5.10 & 0.70 & 8 \\
& 0.5 & 0.5 & 7 & 5.80 & 0.84 & 8 \\
& 0.5 & 0.75 & 6 & 5.20 & 0.63 & 8 \\
& 0.75 & 0.25 & 8 & 7.30 & 0.67 & 11 \\
& 0.75 & 0.5 & 8 & 6.80 & 0.63 & 11 \\
& 0.75 & 0.75 & 9 & 7.80 & 0.60 & 11 \\
\noalign{\smallskip}\hline
\end{tabular}
\end{table}

\begin{table}
\centering
\caption{Results obtained by Algorithm \ref{Alg. FW} on  instances from DIMACS dataset (part II).}
\begin{tabular}{ccccccc}
\hline\noalign{\smallskip}
\textbf{Graph} & $b$ & $p$ & \textbf{max} & \textbf{mean} & \textbf{std} & \textbf{real max}\\
\noalign{\smallskip}\hline\noalign{\smallskip}
\multirow{9}{*}{brock400\_2}
& 0.25 & 0.25 & 6 & 4.40 & 0.70 & 7 \\
& 0.25 & 0.5 & 5 & 4.10 & 0.57 & 6 \\
& 0.25 & 0.75 & 5 & 4.70 & 0.48 & 6 \\
& 0.5 & 0.25 & 7 & 6.20 & 0.70 & 9 \\
& 0.5 & 0.5 & 8 & 6.60 & 0.84 & 10 \\
& 0.5 & 0.75 & 7 & 6.30 & 0.48 & 9 \\
& 0.75 & 0.25 & 10 & 9.60 & 0.52 & 14 \\
& 0.75 & 0.5 & 11 & 9.40 & 0.92 & 15 \\
& 0.75 & 0.75 & 11 & 9.70 & 0.80 & 15 \\
\noalign{\smallskip}\hline\noalign{\smallskip}
\multirow{9}{*}{gen200\_p0.9\_44}
& 0.25 & 0.25 & 5 & 4.50 & 0.53 & 6 \\
& 0.25 & 0.5 & 5 & 4.00 & 0.67 & 6 \\
& 0.25 & 0.75 & 5 & 4.20 & 0.42 & 6 \\
& 0.5 & 0.25 & 8 & 6.70 & 0.48 & 10 \\
& 0.5 & 0.5 & 7 & 6.60 & 0.52 & 11 \\
& 0.5 & 0.75 & 7 & 6.20 & 0.75 & 10 \\
& 0.75 & 0.25 & 13 & 11.30 & 1.03 & 17 \\
& 0.75 & 0.5 & 13 & 11.60 & 0.70 & 17 \\
& 0.75 & 0.75 & 15 & 11.70 & 1.37 & 17 \\
\noalign{\smallskip}\hline\noalign{\smallskip}
\multirow{9}{*}{gen200\_p0.9\_55}
& 0.25 & 0.25 & 5 & 4.20 & 0.40 & 6 \\
& 0.25 & 0.5 & 4 & 3.80 & 0.63 & 7 \\
& 0.25 & 0.75 & 4 & 4.00 & 0.00 & 6 \\
& 0.5 & 0.25 & 8 & 7.00 & 0.63 & 10 \\
& 0.5 & 0.5 & 8 & 6.90 & 0.88 & 10 \\
& 0.5 & 0.75 & 8 & 7.00 & 0.89 & 10 \\
& 0.75 & 0.25 & 13 & 12.20 & 0.79 & 17 \\
& 0.75 & 0.5 & 15 & 12.10 & 1.52 & 17 \\
& 0.75 & 0.75 & 13 & 10.70 & 1.52 & 18 \\
\noalign{\smallskip}\hline\noalign{\smallskip}
\multirow{9}{*}{hamming8-4}
& 0.25 & 0.25 & 4 & 3.60 & 0.52 & 5 \\
& 0.25 & 0.5 & \textbf{5} & 3.80 & 0.60 & 5 \\
& 0.25 & 0.75 & 4 & 3.80 & 0.42 & 5 \\
& 0.5 & 0.25 & 6 & 4.90 & 0.57 & 7 \\
& 0.5 & 0.5 & 6 & 5.00 & 0.47 & 7 \\
& 0.5 & 0.75 & 6 & 5.20 & 0.60 & 7 \\
& 0.75 & 0.25 & 8 & 6.80 & 0.63 & 10 \\
& 0.75 & 0.5 & 8 & 6.50 & 0.62 & 10 \\
& 0.75 & 0.75 & 8 & 6.40 & 0.80 & 10 \\
\noalign{\smallskip}\hline\noalign{\smallskip}
\multirow{9}{*}{keller4}
& 0.25 & 0.25 & 4 & 3.60 & 0.52 & 5 \\
& 0.25 & 0.5 & 4 & 3.50 & 0.53 & 5 \\
& 0.25 & 0.75 & 4 & 3.20 & 0.40 & 5 \\
& 0.5 & 0.25 & 6 & 5.20 & 0.79 & 7 \\
& 0.5 & 0.5 & 6 & 5.20 & 0.63 & 7 \\
& 0.5 & 0.75 & 6 & 4.70 & 0.64 & 7 \\
& 0.75 & 0.25 & 7 & 6.60 & 0.70 & 9 \\
& 0.75 & 0.5 & 7 & 6.30 & 0.67 & 9 \\
& 0.75 & 0.75 & 7 & 6.10 & 0.30 & 9 \\
\noalign{\smallskip}\hline
\end{tabular}
\end{table}

\begin{table}
\centering
\caption{Results obtained by Algorithm \ref{Alg. FW} on instances from DIMACS dataset (part III).}
\label{table:3}
\begin{tabular}{ccccccc}
\hline\noalign{\smallskip}
\textbf{Graph} & $b$ & $p$ & \textbf{max} & \textbf{mean} & \textbf{std} & \textbf{real max}\\
\noalign{\smallskip}\hline\noalign{\smallskip}
\multirow{9}{*}{p\_hat300-1}
& 0.25 & 0.25 & \textbf{4} & 3.50 & 0.52 & 4 \\
& 0.25 & 0.5 & \textbf{4} & 3.70 & 0.46 & 4 \\
& 0.25 & 0.75 & \textbf{4} & 3.20 & 0.63 & 4 \\
& 0.5 & 0.25 & \textbf{5} & 3.80 & 0.63 & 5 \\
& 0.5 & 0.5 & 4 & 3.80 & 0.40 & 6 \\
& 0.5 & 0.75 & 4 & 3.40 & 0.52 & 5 \\
& 0.75 & 0.25 & 5 & 4.30 & 0.48 & 7 \\
& 0.75 & 0.5 & 5 & 4.40 & 0.52 & 7 \\
& 0.75 & 0.75 & 5 & 4.30 & 0.48 & 7 \\
\noalign{\smallskip}\hline\noalign{\smallskip}
\multirow{9}{*}{p\_hat300-1}
& 0.25 & 0.25 & \textbf{4} & 3.50 & 0.52 & 4 \\
& 0.25 & 0.5 & \textbf{4} & 3.70 & 0.46 & 4 \\
& 0.25 & 0.75 & \textbf{4} & 3.20 & 0.63 & 4 \\
& 0.5 & 0.25 & \textbf{5} & 3.80 & 0.63 & 5 \\
& 0.5 & 0.5 & 4 & 3.80 & 0.40 & 6 \\
& 0.5 & 0.75 & 4 & 3.40 & 0.52 & 5 \\
& 0.75 & 0.25 & 5 & 4.30 & 0.48 & 7 \\
& 0.75 & 0.5 & 5 & 4.40 & 0.52 & 7 \\
& 0.75 & 0.75 & 5 & 4.30 & 0.48 & 7 \\
\noalign{\smallskip}\hline\noalign{\smallskip}
\multirow{9}{*}{p\_hat300-2}
& 0.25 & 0.25 & 5 & 3.50 & 0.85 & 6 \\
& 0.25 & 0.5 & \textbf{5} & 4.00 & 0.47 & 5 \\
& 0.25 & 0.75 & 6 & 4.70 & 1.27 & 6 \\
& 0.5 & 0.25 & 7 & 5.40 & 0.70 & 8 \\
& 0.5 & 0.5 & 7 & 5.70 & 0.67 & 8 \\
& 0.5 & 0.75 & 7 & 5.50 & 1.00 & 9 \\
& 0.75 & 0.25 & 10 & 8.20 & 0.79 & 12 \\
& 0.75 & 0.5 & 10 & 8.80 & 0.92 & 13 \\
& 0.75 & 0.75 & 9 & 8.10 & 0.30 & 12 \\
\noalign{\smallskip}\hline\noalign{\smallskip}
\multirow{9}{*}{p\_hat300-3}
& 0.25 & 0.25 & 5 & 4.10 & 0.74 & 7 \\
& 0.25 & 0.5 & 5 & 4.70 & 0.48 & 6 \\
& 0.25 & 0.75 & 4 & 3.80 & 0.42 & 6 \\
& 0.5 & 0.25 & 7 & 6.20 & 0.63 & 9 \\
& 0.5 & 0.5 & 8 & 6.40 & 0.70 & 9 \\
& 0.5 & 0.75 & 7 & 6.00 & 0.67 & 10 \\
& 0.75 & 0.25 & 12 & 10.50 & 1.08 & 15 \\
& 0.75 & 0.5 & 11 & 10.50 & 0.50 & 16 \\
& 0.75 & 0.75 & 11 & 9.60 & 0.70 & 15 \\
\noalign{\smallskip}\hline
\end{tabular}
\end{table}

Each experiment included 10 test points and 50 matrices $\Ub$. The initial points were randomly chosen in the domain $\Delta^V_\epsilon$. The maximum number of iterations was set to $k_{\text{max}} = 1000$. The stepsize $\alpha$ was computed using the Armijo rule with parameters $\omega = 0.8$ and $\sigma = 0.4$. The stopping threshold $\xi$ was set to 0.001. For the function $G$, we used $\gamma = 1$, while the parameter $\beta$ was computed according to Theorem~\ref{Th:intequiv}, with $\epsilon = 0.001$ when $n < 1000$, and $\epsilon = 0.0001$ otherwise. Finally, we set $\delta = 0.01$ while $\eta$ was updated accordingly to Theorem~\ref{thm:final_convergence}, with $\eta_0 = \frac{\delta}{2L}$, where $L$ is the Lipschitz constant computed as in \eqref{eq:L}. We  note that with the graphs and parameters used, the algorithm  converged to a maximal common clique in all experiments.

We further highlight that the adversary oracle requires solving system \eqref{system:my_B} multiple times, which is the most computationally demanding part of the algorithm. In order to mitigate this cost, one may include a control condition that allows this step to be skipped when appropriate. In particular, if at a given iteration the algorithm reaches a feasible point $\z_k$ for which all matrices $g_\Ub$ are active, then $\UU(\z_k) = \UU_\eta(\z_k) = \UU$. In this case, the computation becomes significantly simpler: system~\eqref{system:my_B} no longer needs  to be solved, since $\partial_\eta G(\z_k)$ coincides with $\partial G(\z_k)$, and the LMO reduces to computing the Clarke dual gap $c(\z_k)$ directly.

Empirically, we have observed that this scenario arises very often, and it appears to be influenced by the choice of the problem parameters (in particular this is the case for large values of $\beta$). As a result, after a few iterations the algorithm tends to give an iterate with all $g_\Ub$ active, and keeps generating points that preserve this property. A more refined analysis of this phenomenon could provide sharper theoretical guarantees and is left for future investigation.

The results hence highlight that the proposed approach consistently delivers strong overall performance. Variability across runs remains modest, and although the algorithm falls short of the maximum common clique by a bunch of vertices, it reliably converges to large and meaningful solutions. In summary, the results confirm that the approach effectively captures the structure imposed by the backbone and shows stable behavior across different perturbation levels.

\section{Conclusion}\label{section:conclusion}

Apparently for the first time in literature, we generalize the continuous formulation of the Maximum-Clique problem to a game theoretic setting with adversarial uncertainty. Considering stable global solutions, we establish a one-to-one correspondence with maximum common cliques under this discrete uncertainty on the underlying graph structure. 
The two main tools for our continuous formulation are a penalty term and semi-continuity constraints, along with a min-max approach for dealing with the discrete uncertainty, even in presence of (exponentially) many components. 

As we are facing a difficult NP-complete problem, this is a hard global optimization task. We thus develop a first-order and projection-free algorithm, tailored to our nonsmooth and nonconvex formulation, that can be easily embedded into a global optimization algorithmic framework based on multistart or local search/basin hopping strategies.
To this end we  use a seemingly novel concept for approximate subdifferentials. While we provide a complete convergence theory, our numerical experience shows promising results.

Future lines of research may include
to establish an equivalence between local solutions to our or a related model and common maximal cliques;
from an algorithmic point of view, one can employ other variants of projection-free procedures, improving further the efficiency of the search direction determination.

\begin{acknowledgements}
 
Chiara Faccio is member of the Gruppo Nazionale Calcolo Scientifico-Istituto Nazionale di Alta Matematica (GNCS-INdAM). 
\end{acknowledgements}

%
\section*{Declarations}
The authors declare no conflict of interest.

\bibliographystyle{spmpsci}      
\bibliography{refs.bib}   

%
%

\end{document}